\newtheorem{lemma}{Lemma}[section]
\newtheorem{theorem}[lemma]{Theorem}
\newtheorem{corollary}[lemma]{Corollary}
\newtheorem{definition}[lemma]{Definition}
\newtheorem{example}[lemma]{Example}
\newtheorem{remark}[lemma]{Remark}
\def\S{\mathbb{S}}
\def\X{\mathbb{X}}
\def\G{\mathbb{G}}
\def\A{\mathbb{A}}
\def\H{\mathbb{H}}
\def\R{\mathbb{R}}
\def\B{\mathcal{B}}
\def\dF{\mathbb{F}}
\def\dM{\mathbb{M}}
\begin{document}




\title{Sufficiency of Deterministic Policies for Atomless Discounted and Uniformly Absorbing MDPs with Multiple Criteria 
\thanks{The research of the
first author was partially supported by the National Science Foundation [Grant CMMI-1636193].}} 


\author{Eugene~A.~Feinberg\thanks{Department of Applied Mathematics and
Statistics,
 Stony Brook University,
Stony Brook, NY 11794-3600, USA, eugene.feinberg@sunysb.edu}
\and Alexey~B.~Piunovskiy\thanks{Department of Mathematical Studies,
University of Liverpool, Liverpool, L69 7ZL, UK, piunov@liverpool.ac.uk}
}
\maketitle

%
%

%
%
%

\begin{abstract}
This paper studies Markov Decision Processes (MDPs) with atomless initial state distributions and atomless transition probabilities.  Such MDPs are called atomless.  The initial state distribution is considered to be fixed.  We show that for discounted MDPs with bounded one-step reward vector-functions, for each policy there exists a deterministic (that is, nonrandomized and stationary) policy with the same performance vector.  This fact is proved in the paper for a more general class of uniformly absorbing MDPs with expected total costs, and then it is extended under certain assumptions to MDPs with unbounded rewards. For problems with multiple criteria and constraints, the results of this paper imply that for atomless MDPs studied in this paper it is sufficient to consider only deterministic policies, while  without the atomless assumption it is well-known that randomized policies can outperform deterministic ones.  We also provide an example of an MDP demonstrating that, if a vector measure is defined on a standard Borel space, then Lyapunov's convexity theorem is a special case of the described results.

 \end{abstract}












\section{Introduction}\label{s1}
This paper studies Markov Decision Processes (MDPs) with multiple criteria when each criterion is evaluated by the expected total discounted rewards or costs.  The paper also studies more general  uniformly absorbing MDPs.  The number of criteria is finite, and the initial state distribution is fixed.  For each criterion there is a function of one-step rewards, and the performance of each policy is evaluated by the finite-dimensional vector, whose coordinates are expected total rewards for the corresponding reward functions.  For each policy this vector is called a performance vector.    An MDP is called atomless, if the initial state distribution and transition probabilities are atomless.  In general, constrained optimization requires the use of randomized decisions.  However, for atomless problems nonrandomized policies are optimal under broad conditions.

The first results of this kind were established by Dvoretzky et al.~\cite{DWW,DWW1}, who proved that for a one-step problem with multiple  atomless initial distributions,  multiple reward functions and finite action sets, the expected reward-vector achieved by an arbitrary policy can be achieved by a nonrandomized policy.  The case of multiple initial distributions can be reduced to a single initial distribution by using the Radon-Nikodym theorem; see~\cite{FP} or Example~\ref{ex10.2}. So, the mentioned result from Dvoretzky et al.~\cite{DWW,DWW1} can be interpreted as a fact for one-step atomless MDPs. As was observed by Feinberg and Piunovskiy~\cite{FP}, this result holds for infinite action sets; see also Ja\'skiewicz and Nowak \cite{JN} for the generalization to conditional expectations. The proof in  Dvoretzky et al.~\cite{DWW,DWW1} is based on Lyapunov's convexity theorem, that states that the range of a finite atomless vector-measure is a convex  compact subset of the Euclidean space.

Feinberg and  Piunovskiy~\cite{b6,b5} proved that for atomless MDPs with a given initial state distribution and with multiple expected total rewards, for every policy there is a nonrandomized Markov policy with the same performance vector. In \cite{b6} this fact was proved for MDPs with weakly continuous transition probabilities and with weakly continuous reward functions.  The proof in \cite{b6} is based on geometric arguments.  In \cite{b5} this fact is proved for arbitrary atomless MDPs with expected total rewards, and the proof is based on Lyapunov's convexity theorem.

In this paper we prove that for an atomless discounted MDP with multiple criteria and bounded reward functions, for each policy there exists a deterministic (that is, nonrandomized and stationary) policy with the same performance vector.  In fact, we prove this result for uniformly absorbing MDPs with the expected total rewards.  This is a more general class of MDPs than discounted ones.  The proof for deterministic policies is much more difficult than the proofs for nonrandomized Markov ones provided in  \cite{b6} and \cite{b5}.  In addition, the proofs in this paper use and extend  geometric methods introduced in \cite{b6} instead of applying Lyapunov's convexity theorem.   Example~\ref{ex10.2} demonstrates that Lyapunov's convexity theorem can be interpreted as a one-step version of the main result of this paper.

For discounted MDPs with multiple criteria and constraints, under certain conditions there exist (randomized) stationary optimal policies; see Altman~\cite{Al}, Feinberg and Shwartz~\cite{FS},  Hern\'andez-Lerma, and Gonz\'alez-Hern\'andez~\cite{HLGH}, Piunovskiy~\cite{b7}.  The results of this paper imply the existence of optimal deterministic policies for constrained atomless discounted MDPs and for constrained atomless uniformly absorbing MDPs if  optimal policies exist.

 The main result of this paper, Theorem~\ref{tgmain}, states that the sets of performance vectors for all policies and for deterministic policies coincide.  In order to prove the main result, we deal with three types of subsets of linear spaces: the set of strategic measures, the set of occupancy measures, and the set of performance vectors.   For a given policy, the strategic measure is the probability distribution of all state-action trajectories, and the occupancy measure is the measure on the product of the state and action spaces, and the value of this measure on each measurable set is the expected total number of times when the corresponding actions are selected at the corresponding states. The set of performance vectors  (strategic measures, occupancy measures)  consists of performance vectors (strategic measures, occupancy measures) for all policies. The set of performance vectors is a projection of the set of occupancy measures, and the set of occupancy measures is a projection of the set of strategic measures.  Projections inherit certain properties of the sets from which they are projected.  These properties include convexity and compactness.

The set of all strategic measures is convex; Dynkin and Yushkevich~\cite[Section]{DY}.   Therefore,  the set of all occupancy vectors and the set of all performance vectors are convex. Under certain conditions the sets of strategic measures is compact. Sch\"al~\cite{b8} introduced two such conditions: (S) and (W).  Condition (S) assumes setwise continuity of transition probabilities, and Condition (W) assumes weak continuity of transition probabilities.  In the both cases, appropriate continuity properties are assumed for reward functions.  
In particular, condition (S) holds for MDPs with finite action sets. Under the mentioned conditions, compactness properties also hold for the sets of all occupancy measures and all performance vectors.

For discounted and absorbing MDPs, if the initial distribution is fixed, then for each policy there exists a stationary policy with the same occupancy measure; see \cite{Al, Bor, FR, FSo, b7, Pi98}.  Therefore, the sets of all occupancy measures and all performance vectors coincide with the corresponding sets for all stationary policies.  The nontrivial step in proving Theorem~\ref{tgmain} is to show that the sets of performance vectors for all stationary and for all deterministic policies coincide.

The important and nontrivial step is to prove that for an atomless MDP the set of performance vectors for all deterministic policies is convex.  This fact is nontrivial even for the case of one criterion.  Example~\ref{ex10.2} demonstrates that for multiple criteria this fact is a nontrivial extension of Lyapunov's convexity theorem  for a standard Borel space. In order to prove this fact, we show that the set of occupancy measures endowed with the topology of setwise convergence is path-connected.  Therefore, being its projection,  the set of performance vectors is a connected subset of the Euclidean space.  Thus, for the single-criterion case, this set is a connected subset of a line. Therefore, it is convex.  The case of multiple criteria is studied by induction using the dimensionality reduction technique introduced in this paper.

 Section~\ref{s2} of this paper introduces the basic definitions for the discounted case and formulates the main result for discounted MDPs.  Section~\ref{s3} describes absorbing and uniformly absorbing MDPs, formulates the main result for uniformly absorbing MDPs, and shows that a discounted MDP is a particular case of a unformly absorbing MDP.  
 Section~\ref{s4} studies the properties of occupancy measures.  Section~\ref{s5} describes Condition~(S), which is sufficient for compactness of the sets of all strategic measures, all occupancy measures, and all performance vectors.  In particular, this condition holds for an MDP with finite action sets.  Section~\ref{s6} describes submodels and dimensionality reduction.  Section~\ref{s7} introduces an MDP generated by two deterministic policies and describes continuity properties for such MDPs.  Section~\ref{s8} establishes path-connectedness of the sets of occupancy measures for all deterministic policies for atomless MDPs.  This property implies that the set of all performance vectors for deterministic policies is path-connected. Thus, for a single-criterion problem, this set is convex.  The proof of the main theorem is provided in Section~\ref{s9}.  Section~\ref{s10} provides the results for unbounded reward vector-functions by using the standard weighted norm approach.  These results are used in Section~\ref{s11} to show that for standard Borel spaces Lyapunov's convexity theorem is a special case of the results of this paper.  
\section{Main result for Discounted MDPs}\label{s2}
We start with some definitions.  Recall that two measurable spaces $(E,{\cal E})$ and $(D,{\cal D})$ are called isomorphic, if there exists a one-to-one measurable correspondence $f$ between them such that the correspondence $f^{-1}$ is measurable. A Polish space is a complete separable metrizable space.  A standard Borel space is a measurable space isomorphic to a Borel subset of a Polish space.  Properties of standard Borel spaces can be found in Bertsekas and Shreve~\cite{b3}, Dynkin and Yushkevich~\cite{DY}, Kechris~\cite{Ke}, and  Srivastava~\cite{Sr}.  In particular, a standard Borel space is either finite or countable, or it has the cardinality of the continuum.  Two standard Borel spaces with the same cardinality are isomorphic. We always consider Borel $\sigma$-fields on topological and metric spaces.  In particular, a standard Borel space with the cardinality of continuum is isomorphic to the interval $[0,1].$  For two measurable spaces $(E,{\cal E})$ and $(D,{\cal D}),$ a transition probability $q$ defines a probability measure $q(\cdot|d)$ on $(E,{\cal E})$ for each $d\in D$ such that $q(C|\cdot)$ is a measurable function on $(D,{\cal D})$ for each $C\in\cal E.$ We recall that a measure  $\nu$ on a standard Borel $(D,{\cal D})$ space is called atomless if $\nu(d)=0$ for all $d\in D;$ here and below we omit curly brackets in the expressions like $\nu(\{x\})$ and $p(\{y\}|x,a).$ 

A discounted MDP is defined by the following objects:
\begin{itemize}
\item[(i)]
a standard Borel state space $(\X,{\cal X}),$
\item[(ii)] a standard Borel action space $(\A,{\cal A}),$
\item[(iii)] nonempty sets of  actions $A(x)\in\cal A$ available at states $x\in \X,$ such that 
${\rm Gr}_\X(A):=\{(x,a)\in \X\times \A:\, x\in \X,\ a\in A(x)  \}
$
is a measurable subset of $(\X\times \A,\cal{X}\otimes\cal{A}),$
\item[(iv)] a transition probability $p$ from $\X\times \A$ to $\X,$
\item[(v)] an initial state distribution $\mu,$ which is a probability measure on $(\X,{\cal X}),$
\item[(vi)] a bounded measurable reward vector-function $r:\X\times\A\mapsto \R^N,$ where $N$ is a natural number,
\item[(vii)]   a discount factor $\beta\in [0,1).$
 \end{itemize}
 \begin{definition}
An MDP is called atomless if $\mu(x)=0$ and  $p(y|x,a)=0$ for all $x,y\in \X$ and  $a\in A(x).$
\end{definition}
If an action $a\in A(x)$ is chosen at a state $x\in \X,$ then the process moves to the next state according to the probability distribution $p(\cdot|x,a)$ and the vector reward $r(x,a)=(r^{(1)}(x,a), r^{(2)}(x,a),\ldots,r^{(N)}(x,a))$  is collected according to criteria $1,2,\ldots,N.$  To avoid a trivial situation, when a policy cannot be defined, we always assume that there exists a measurable mapping $\phi:\X\mapsto A$ such that $\phi(x)\in A(x)$ for all $x\in\X.$  Such mapping is called a selector.

Consider the sets of possible finite histories $\H_t:=\X\times (\A\times \X)^t$ up to time $t=0,1,\ldots\ .$  A policy $\pi$ is a sequence of transition probabilities $\pi_t,$ $t=0,1,\ldots,$ from $\H_t$ to $A$ such that $\pi(A(x_t)|h_t)=1$ for each $h_t=(x_0,a_0,x_1,\ldots,x_t)\in \H_t.$  A policy is called nonrandomized if each transition probability $\pi_t(\cdot|h_t),$ $t=0,1,\ldots,$  is concentrated at one point.  A policy $\pi$ is called Markov, if for each $t=1,2,\ldots$ the values of probabilities $\pi_t(\cdot|x_0,a_0,\ldots,x_t)$ are the functions of $x_t.$  A Markov policy is called stationary if $\pi_t(\cdot|x)=\pi_s(\cdot|x)$ for all $x\in\X$ and for all $s,t=0,1,\ldots\ .$ A transition probability $\pi_t$ for a stationary policy $\pi$ is also denoted as $\pi.$ A nonrandomized Markov policy is defined by a sequence of selectors $\{\phi_t\}_{t=0,1,\ldots}.$  These selectors are equal for a nonrandomized stationary policy.  A nonrandomized stationary policy $\phi$ is called deterministic, and we identify it with the selector $\phi.$ We denote by $\Pi,$ $\dM,$ $\S,$ and $\dF$ the sets of all,  nonrandomized Markov, stationary, and deterministic policies respectively.  Observe that $\dF\subset\dM\subset\Pi$ and $\dF\subset\S\subset\Pi.$

The existence of the selector means that  $\dF\ne\emptyset.$  This assumption does not limit the generality of the results of this paper.
If $\dF=\emptyset,$ then $\Pi=\emptyset;$ see Dynkin and Yushkevich~\cite[Sections 3.1 and 3.2]{DY}. Therefore, if $\dF=\emptyset,$ then the main result of the paper, Theorem~\ref{tgmain}, is equivalent to the trivial identity $\emptyset=\emptyset.$

The two special features of the introduced model are: (i) the rewards are vector-valued, and (ii) the initial distribution $\mu$ is fixed.  However, we  consider additional initial distributions and initial states in auxiliary results in a few places in this paper.  Whenever we consider other initial distributions rather than $\mu,$ we specify them in notations.

According to the Ionescu Tulcea theorem, an initial probability distribution $\mu$ on the state space $\X$ and transition probabilities $\pi_t$ and $p$ define a unique probability measure $P^\pi$ on the  countable product $\H_\infty:=\X\times (\A\times \X)^\infty$ endowed with the $\sigma$-field $\cal{X}\otimes (\cal{A}\otimes \cal{X})^\infty.$  Expectations with respect to this probability is denoted by $E^\pi.$

\begin{remark}\label{rem1}
{\rm The corresponding probabilities and expectations are defined for each initial probability distribution $\nu$ on $(\X,{\cal X}).$  In this case, they are denoted as $P_\nu^\pi$ and $E_\nu^\pi.$  That is, $P^\pi:=P_\mu^\pi$ and $E^\pi:=E_\mu^\pi.$  If a probability measure $\nu$ is concentrated at a point $x\in\X,$ that is, $\nu(x)=1,$ we shall write $P_x^\pi$ and $E_x^\pi$ instead of $P_\nu^\pi$ and $E_\nu^\pi$ respectively.}
\end{remark}

For an initial state distribution $\mu$ and a policy $\pi,$ the vector of expected total discounted rewards is
\[v^\pi_\beta:=E^\pi \sum_{t=0}^\infty \beta^t r(x_t,a_t).\]
For a set of policies $\Delta\subset\Pi,$ the set of all performance vectors is ${\cal V}_\beta^\Delta:=\{v_\beta^\pi :\, \pi\in\Delta\}.$

Denote ${\cal V}_\beta:= {\cal V}_\beta^\Pi. $ It is obvious that ${\cal V}_\beta^\dF\subset {\cal V}_\beta\subset\R^N$ and, in general, it is possible that  ${\cal V}_\beta^\dF\ne {\cal V}_\beta.$  For example, if $\X$ and $\A$ are finite sets, then the set ${\cal V}_\beta^\dF$ is finite while the set ${\cal V}_\beta$ may have the cardinality of the continuum.  In fact, for problems with finite state and action sets, ${\cal V}_\beta$ is a convex hull of  ${\cal V}_\beta^\dF;$ see e.g. Feinberg and Rothblum~\cite[Theorem 6.1]{FR}. 
According to the following theorem, which  is the main result of this paper for discounted MDPs, the situation is different for atomless MDPs.
\begin{theorem}\label{tmain}
 For an atomless MDP  ${\cal V}_\beta^\dF= {\cal V}_\beta.$
\end{theorem}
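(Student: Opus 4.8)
The plan is to prove the nontrivial inclusion $\mathcal{V}_\beta \subset \mathcal{V}_\beta^{\dF}$, since $\mathcal{V}_\beta^{\dF} \subset \mathcal{V}_\beta$ is immediate from $\dF \subset \Pi$. The overall strategy is to reduce the claim to the two structural facts highlighted in the introduction: first, that for discounted (hence uniformly absorbing) MDPs every policy's performance vector is already achieved by some \emph{stationary} policy, so $\mathcal{V}_\beta = \mathcal{V}_\beta^{\S}$; and second, that the set $\mathcal{V}_\beta^{\dF}$ of performance vectors for deterministic policies is a \emph{convex} subset of $\R^N$. Granting these two facts, the argument closes quickly: every vector $v \in \mathcal{V}_\beta = \mathcal{V}_\beta^{\S}$ arises from a stationary (randomized) policy, a stationary policy can be viewed as a randomization over deterministic selectors, and one shows the resulting $v$ lies in the convex hull of finitely many (or a measurable family of) deterministic performance vectors, which by convexity of $\mathcal{V}_\beta^{\dF}$ forces $v \in \mathcal{V}_\beta^{\dF}$.

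First I would invoke the occupancy-measure reduction: for discounted MDPs with a fixed initial distribution $\mu$, for each policy $\pi$ there is a stationary policy $\sigma$ with the same occupancy measure, and since $v_\beta^\pi$ is the integral of $r$ against the (discounted) occupancy measure, we get $v_\beta^\pi = v_\beta^\sigma$. This yields $\mathcal{V}_\beta = \mathcal{V}_\beta^{\S}$, so it suffices to show every stationary policy's performance vector is attained by a deterministic one. The key reduction step is therefore to pass from $\S$ to $\dF$. Here the convexity of $\mathcal{V}_\beta^{\dF}$ is essential: a stationary randomized policy $\sigma$ prescribes, at each state $x$, a probability distribution $\sigma(\cdot\,|\,x)$ over actions, and one wants to express $v_\beta^\sigma$ as an average of performance vectors of deterministic selectors. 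The atomless hypothesis is what makes this possible, and the mechanism is the path-connectedness of the set of occupancy measures for deterministic policies established earlier: path-connectedness projects to connectedness of $\mathcal{V}_\beta^{\dF}$ in $\R^N$, which in one dimension is convexity, and the multi-criteria case follows by the induction and dimensionality-reduction machinery of Sections~\ref{s6}--\ref{s8}.

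The hard part will be establishing that $\mathcal{V}_\beta^{\dF}$ is convex, and more specifically that a given stationary performance vector actually lands inside this convex set rather than merely in the convex hull of an uncontrolled family of deterministic vectors. The delicate point is that deterministic policies are a very thin subset of all stationary policies, so convexity of $\mathcal{V}_\beta^{\dF}$ cannot be taken for granted and must be extracted from the atomless structure via the connectedness argument; this is precisely where Lyapunov's convexity theorem enters in the one-step case, and its MDP analogue requires the geometric/path-connectedness methods rather than a direct appeal. I expect the genuine obstacle to be verifying that the continuous deformation of occupancy measures for deterministic policies can be constructed so as to remain within the deterministic class while sweeping out a connected set whose projection covers the target stationary vector; the atomless assumptions on both $\mu$ and $p(\cdot\,|\,x,a)$ are what permit such deterministic interpolation, mimicking the way Lyapunov's theorem uses atomlessness to split mass continuously.

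Finally, I would assemble the pieces: given $v \in \mathcal{V}_\beta$, pick a stationary $\sigma$ with $v_\beta^\sigma = v$; exhibit deterministic selectors whose performance vectors have $v$ in their convex combination; and conclude $v \in \mathcal{V}_\beta^{\dF}$ by the convexity just proved. Combined with the trivial inclusion, this gives $\mathcal{V}_\beta^{\dF} = \mathcal{V}_\beta$. The same scheme is what the paper carries out in the more general uniformly absorbing setting, of which the discounted case is a specialization, so in practice I would prove the general version and specialize.
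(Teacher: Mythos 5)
Your skeleton shares two pillars with the paper: the reduction $\mathcal{V}_\beta=\mathcal{V}_\beta^{\S}$ via occupancy measures, and the identification of convexity of $\mathcal{V}_\beta^{\dF}$ (obtained from atomlessness via path-connectedness and dimensionality reduction) as the crux; like the paper, you would prove the uniformly absorbing version and specialize (the paper does this through Lemmas~\ref{lbound} and~\ref{lreduction}). The genuine gap is in your closing step. You assert that a stationary randomized policy ``can be viewed as a randomization over deterministic selectors,'' so that $v_\beta^\sigma$ lies in the convex hull (or is a barycenter) of deterministic performance vectors. This is false at the level of strategic measures: a stationary policy randomizes independently at every stage, whereas a mixture of selectors freezes the action at each state and correlates choices across states. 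It also fails for occupation measures if the mixing weights are read off from $\sigma$: in a one-state absorbing example where action $a$ keeps the process at the state with probability $1/2$ and action $b$ absorbs it immediately, the fifty-fifty stationary policy has occupation measure $(2/3,2/3)$ on the two state--action pairs, while the fifty-fifty mixture of the two selectors gives $(1,1/2)$; the correct representation is $\tfrac13(2,0)+\tfrac23(0,1)$, with weights that are not the randomization probabilities. The existence of \emph{some} representing convex combination is the nontrivial splitting theorem of Feinberg and Rothblum~\cite{FR}, which is what underlies the finite-model statement quoted in Section~\ref{s2} (for finite state and action sets, $\mathcal{V}_\beta$ is the convex hull of $\mathcal{V}_\beta^{\dF}$), but it is not available off the shelf in the Borel generality needed here; establishing it for atomless Borel MDPs is essentially as hard as the theorem you are proving, and the paper never uses it.

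Two further points explain how the paper closes the argument instead, and why your plan cannot be repaired by simply citing its convexity results. First, without a splitting representation, convexity of $\mathcal{V}_\beta^{\dF}$ plus scalarization yields only $\mathcal{V}_\beta\subset\bar{\mathcal{V}}_\beta^{\dF}$ (Lemma~\ref{lclos}, resting on Feinberg's result~\cite{F92} that deterministic policies suffice for a single scalarized criterion). Since $\mathcal{V}_\beta^{\dF}$ need not be closed (Example~\ref{exnoncomp}: it can equal $(0,1)$), this settles only points of $\mathcal{V}_\beta$ interior to $\bar{\mathcal{V}}_\beta^{\dF}$; points of $\mathcal{V}_\beta$ on the boundary require a separate mechanism, which is exactly what Theorem~\ref{tdimred} (dimensionality reduction) supplies in the induction of Section~\ref{s9}. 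Second, you treat convexity of $\mathcal{V}_\beta^{\dF}$ for arbitrary $N$ as an input obtainable from Sections~\ref{s6}--\ref{s8}, but those sections give convexity only for $N=1$ (Corollary~\ref{cN1Conv}); for $N+1$ criteria, convexity is proved in Lemma~\ref{lcovdeFF} using the validity of Theorem~\ref{tgmain} for $N$ criteria. Convexity and the equality $\mathcal{V}^{\dF}=\mathcal{V}$ are thus established jointly, by a single induction, and cannot be decoupled into ``first prove convexity, then conclude by a convex-hull argument'' as your proposal requires.
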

In Section~\ref{s3} we formulate a more general result, which is proved later in this paper.  

\section{Absorbing MDPs and the Main Result}\label{s3}
We start this section with the definition of  the expected total reward under fairly general condition and for the case of a single criterion, that is,  $N=1.$ In this case,  $r$ is a bounded real-valued function, but in formula \eqref{defdiscpos} and in Definition~\ref{dEF3.1} we do not assume that $r$ is bounded. Then we define absorbing and uniformly absorbing MDPs,  formulate the main result of this paper, Theorem~\ref{tgmain}, and show that it is more general  than Theorem~\ref{tmain}, which states the sufficiency of deterministic policies for atomless discounted MDPs.

We recall that the initial state distribution $\mu$ is fixed.  
For an arbitrary nonnegative measurable function $r,$ the expected total reward for a policy $\pi$ is
\begin{equation}\label{defdiscpos} v^\pi:=E^\pi\sum_{t=0}^\infty r(x_t,a_t)=\lim_{n\to\infty}E^\pi\sum_{t=0}^{n-1} r(x_t,a_t) ,\end{equation}
where the second equality follows from the monotone convergence theorem.

  For a number $c,$ let us denote $c^+:=\max\{c,0\}$ and $c^-:=-\min\{c,0\}.$  For a policy $\pi\in\Pi,$  we consider positive values $v_+^\pi$ and  $v_-^\pi$ defined by \eqref{defdiscpos} with the rewards $r(x,a)$  substituted with the rewards $r^+(x,a)$ and $r^-(x,a)$ respectively.
\begin{definition}\label{dEF3.1}
If $\min\{v^\pi_+,v^\pi_-\}<+\infty,$ then the expected total reward $v^\pi$ is well-defined and $v^\pi:=v^\pi_+-v^\pi_-.$
\end{definition}

If $v^\pi$ is well-defined, then the  equalities in \eqref{defdiscpos} hold because they hold for rewards $r^+$ and $r^-$ and at least one of the numbers  $v_+^\pi$ and $v_-^\pi$ is finite.

Now let $N>1.$  Then $v_+^\pi$ and    $v_-^\pi$ are defined as $N$-dimensional vectors of the expected total rewards whose coordinates are the expected total rewards for positive and negative parts of the corresponding coordinates of the vector-function $r.$ The vector $v^\pi$ is well-defined if so is each of its $N$ coordinates. In this case, as explained above, $v^\pi:=v^\pi_+-v^\pi_-,$  and the second equality in \eqref{defdiscpos} holds.

\begin{remark}
{\rm For an initial probability distribution $\nu$  on $(\X,\cal{X}),$ that can be different from $\mu,$ we shall use the notations $v(\nu),$ $v_+(\nu),$ and $v_-(\nu)$ respectively.  With a small abuse of notations, we shall write $v(x),$ $v_+(x),$ and $v_-(x)$ respectively, if the probability measure $\nu$ is concentrated at the point ${x}\in\X.$}
\end{remark}

Now we introduce an absorbing MDP.
Let the standard Borel state space of this MDP be denoted by $\bar \X.$  We use the same notations and assumptions for the standard Borel action space $\A,$ sets of available actions $A(\cdot),$ transition probability $p, $ initial state distribution $\mu,$ and reward vector $r$ as in the previous section.

Let $T^x$ denote the first time a stochastic sequence $h=x_0,x_1,\ldots$ with values in $\bar \X$  reaches the state  $x\in\bar{\mathbb X};$
$T^x(h):=\inf\{t= 0,1,\ldots:\, x_t=x\}.$
\begin{definition}\label{defabs}
For the initial probability distribution $\mu,$ an MDP is called absorbing, if there exists a state ${\bar x}\in{\bar \X}$ with the following properties:

 (i) $\mu({\bar x})=0;$

 (ii)  $A({\bar x})=\{{\bar a}\}$ for some ${\bar a}\in \A,$
  $p({\bar x}|{\bar x},{\bar a})=1,$ and $r^{(i)}({\bar x},{\bar a})=0$ for all $i=1,\ldots,N;$

 (iii) there exists a finite constant $L$ such that,  for all policies $\pi\in\Pi,$
 \begin{equation}\label{eqabsiii}E^\pi T^{\bar x}\le L.\end{equation}
\end{definition}
\begin{remark}\label{rem2.0}
The state $\bar{x}$ is fictitious in the sense that  under every policy this state is absorbing, there is no choice of decisions  at $\bar{x},$  and all the rewards are equal to 0 at this state.  After the system hits state $\bar{x},$ it is impossible to control it.  Therefore, the set $\bar{\X}\setminus\{\bar{x}\}$ plays the same role for absorbing MDPs as the state space $\X$ for discounted MDPs; see the notation in formula~\eqref{eqdefdessp341}.
\end{remark}

\begin{remark}\label{rem2}
{\rm We make assumption (i) in Definition~\ref{defabs} for convenience only.  All the results in this paper hold without this assumption.  In principle, it is possible to consider other initial distributions than $\mu.$  If an MDP is absorbing for an initial distribution $\nu,$ which may differ from $\mu,$  then this is stated explicitly in this paper. Of course, the value of the upper bound $L$ may depend on the initial distribution.  In some publications, including~\cite{Al,FR}, absorbing measurable sets are considered instead of absorbing states.  These formulations are equivalent because the states in an absorbing set can be merged into a single state.}
\end{remark}

  Observe that $T^{\bar x}=\sum_{t=0}^\infty I\{t<T^{\bar x}\},$ where $I$ is the indicator function.  We recall that assumption (iii) in Definition~\ref{defabs} is equivalent to the validity of \eqref{eqabsiii} for all deterministic policies $\phi\in\dF$ instead of arbitrary policies $\pi\in\Pi;$ see Feinberg and Rothblum~\cite[p. 132]{FR}. If we interpret $T^{\bar x}$ as the time, when the process stops, then  \eqref{eqabsiii} means that the average life-time of the process is uniformly bounded for all policies given the initial state distribution $\mu.$    For an absorbing MDP, we fix an arbitrary state $\bar x$ described in Definition~\ref{defabs} and  set \begin{equation}\label{eqdefdessp341}\X:={\bar \X}\setminus \{{\bar x}\}.\end{equation}

Let us consider an absorbing MDP.  Recall that the reward vector-function $r$  is bounded  and  $r(\bar x,\bar a)=0$. In view of Definition~\ref{defabs}(ii, iii), the expected total rewards $v^\pi$ are well-defined for all policies $\pi$ and
\begin{equation}v^\pi=\lim_{n\to\infty}E^\pi \sum_{t=0}^{n-1} r(x_t,a_t) = E^\pi\sum_{t=0}^{\infty} r(x_t,a_t) =E^\pi\sum_{t=0}^{\infty} r(x_t,a_t)I\{x_t\in \X\}=E^\pi\sum_{t=0}^{T^{\bar x}-1} r(x_t,a_t),\label{defvpigen}\end{equation}
where  the first two equalities follow from \eqref{defdiscpos} and the last two ones follow from Definition~\ref{defabs}(ii). 
For $\Delta\subset \Pi,$  the sets of performance vectors generated by policies from $\Delta$ is
${\cal V}^\Delta:=\{v^\pi :\, \pi\in\Delta\}.$  We also use the notation \[{\cal V}:={\cal V}^\Pi.\]

For  an absorbing MDP, the monotone convergence theorem implies that for every policy $\pi$ 
\[\lim_{n\to\infty}E^\pi\sum_{t=n}^{\infty} I\{t<T^{\bar x}\}=0.\]
Definition~\ref{defabsu} states the stronger equality. Recall that $\dM$ is the set of all nonrandomized Markov policies and the initial measure $\mu$ is fixed.
\begin{definition}\label{defabsu}
An absorbing MDP is called uniformly absorbing if
\begin{equation}\label{elimsup} \lim_{n\to\infty}\sup_{\pi\in \dM} E^\pi\sum_{t=n}^\infty I\{t<T^{\bar x}\}= 0.
\end{equation}
\end{definition}
 Example~\ref{ex3.12} describes an absorbing MDP, which is not uniformly absorbing. We remark that  the supremum in  \eqref{elimsup} is equal to the same supremum over the set of all policies $\pi\in \Pi;$ Feinberg~\cite[Theorem 3]{F82}. Recall that $E^\pi I\{t<T^{\bar x}\}=P^\pi \{T^{\bar x}>t\}$ and $E^\pi T^{\bar x}=\sum_{t=0}^\infty P^\pi \{T^{\bar x}>t\}$. Since $E^\pi\sum_{t=n}^\infty I\{t<T^{\bar x}\}= E^\pi T^{\bar x}- E^\pi\sum_{t=0}^{n-1} I\{t<T^{\bar x}\},$  assumption~\eqref{elimsup} means that the MDP is absorbing and   the convergence $E^\pi\sum_{t=0}^{n-1} I\{t<T^{\bar x}\}\uparrow E^\pi T^{\bar x}$ as $n\to\infty$  takes place uniformly in $\pi\in\Pi.$  Since the vector-function $r$ is bounded, the convergence in \eqref{defdiscpos} is  uniform in $\pi\in\Pi$ for a uniformly absorbing MDP.

\begin{definition}\label{def2.4}
An absorbing  MDP is called atomless if $\mu(x)=0$ and $p(y|x,a)=0$ for all $x,y\in\X$ and $a\in A(x).$
\end{definition}
 In some sense, Definition~\ref{def2.4} means that the state $\bar x$ is considered to be outside of the state space.  Of course, a uniformly absorbing MDPs is absorbing, and Definition~\ref{def2.4} applies to uniformly absorbing MDPs too.

As explained later in this section, the following theorem, which is the main result of this paper, generalizes Theorem~\ref{tmain} that states the similar statement for discounted MDPs.
\begin{theorem}\label{tgmain} For a uniformly absorbing atomless MDP,
${\cal V}^\dF =  {\cal V}.$
\end{theorem}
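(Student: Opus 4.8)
The plan is to establish the nontrivial inclusion $\mathcal{V}\subseteq\mathcal{V}^{\dF}$, since $\mathcal{V}^{\dF}\subseteq\mathcal{V}$ is immediate from $\dF\subset\Pi$. Following the roadmap sketched in Section~\ref{s1}, I would factor the problem through the layered structure: strategic measures project onto occupancy measures, which project onto performance vectors. The argument proceeds in three reductions. First, I invoke the known result (cited via~\cite{Al, Bor, FR, FSo, b7, Pi98}) that for an absorbing MDP with fixed initial distribution $\mu$, every policy $\pi\in\Pi$ admits a \emph{stationary} policy $\sigma\in\S$ with the same occupancy measure, hence the same performance vector. This collapses the problem to showing $\mathcal{V}^{\S}\subseteq\mathcal{V}^{\dF}$: every stationary policy's performance vector is matched by some deterministic policy. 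The atomless hypothesis and uniform absorption must carry the entire weight of this final step, since for general MDPs stationary randomized policies genuinely outperform deterministic ones.

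Second, I would prove that the set of occupancy measures for deterministic policies, endowed with the topology of setwise convergence (Section~\ref{s4}), is \emph{path-connected}, using the atomless assumption and the machinery of Sections~\ref{s6}--\ref{s8}. The key geometric idea, inherited from~\cite{b6}, is that given two deterministic policies $\phi_0,\phi_1\in\dF$, the MDP they jointly generate (Section~\ref{s7}) lets me construct a continuous path of deterministic policies interpolating between them by gradually reassigning, over subsets of the atomless state space, which of the two selectors governs each state. Atomlessness is exactly what permits continuously sweeping the measure of the ``switching region'' from $0$ to full, so that the occupancy measure varies continuously. Projecting this path-connectedness down to $\R^{N}$ shows $\mathcal{V}^{\dF}$ is a connected subset of Euclidean space.

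Third, I convert connectedness into \emph{convexity} of $\mathcal{V}^{\dF}$, which is where the dimensionality-reduction technique enters. For $N=1$ connectedness on the line already gives convexity, so any $v^{\sigma}\in\mathcal{V}^{\S}$, lying between two deterministic values by the known convex-hull structure of $\mathcal{V}$ (recall $\mathcal{V}$ is convex as a projection of the convex set of strategic measures, per~\cite{DY}), is attained by some $\phi\in\dF$. For $N>1$ I would argue by induction on the number of criteria: using a submodel construction (Section~\ref{s6}) that fixes all but one coordinate of the target vector and applies the inductive hypothesis, then a single-criterion connectedness argument to adjust the remaining coordinate while preserving the others. Showing that $\mathcal{V}^{\dF}$ is convex, combined with $\mathcal{V}^{\dF}\subseteq\mathcal{V}=\mathcal{V}^{\S}$ and the reverse containment $\mathcal{V}^{\S}\subseteq\mathrm{conv}\,\mathcal{V}^{\dF}=\mathcal{V}^{\dF}$, would close the loop and yield $\mathcal{V}^{\dF}=\mathcal{V}$.

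\textbf{The main obstacle} I anticipate is the path-connectedness step, specifically constructing a genuinely \emph{continuous} (in setwise convergence) interpolating path of \emph{deterministic} policies between two arbitrary selectors. Deterministic policies form a discrete-looking set, so continuity is delivered only through the atomless structure: one must partition the continuum of states and reassign selectors on measurable families of sets whose $\mu$- and occupancy-measure mass varies continuously, which requires a Lyapunov-type continuous-measure-splitting argument adapted to the \emph{dynamic} occupancy measure rather than the static initial distribution. Controlling the occupancy measure (which depends on the whole trajectory, not just one step) under such reassignments is precisely why this is harder than the Markov/nonrandomized results of~\cite{b6, b5}, and why uniform absorption~\eqref{elimsup} is needed—it guarantees the tail of the occupancy measure is uniformly small, so that finite-horizon approximations of the path converge setwise and the limiting path stays continuous.
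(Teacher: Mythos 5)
Your skeleton does track the paper's architecture (reduction to stationary policies via occupancy measures, i.e.\ Theorem~\ref{th3.4RS}; path-connectedness of ${\cal M}^\dF$, i.e.\ Theorem~\ref{L1}; induction on the number of criteria), and your intuition about why atomlessness and uniform absorption enter the path-connectedness step is accurate. But there is a genuine missing idea: nothing in your proposal explains how \emph{optimal} points of ${\cal V}$ get attained by deterministic policies, and this is where most of the paper's work lies. For $N=1$, connectedness of ${\cal V}^\dF$ together with the equality of suprema and infima over $\dF$ and over $\S$ (Feinberg~\cite{F92}) only shows that ${\cal V}^\dF$ and ${\cal V}$ have the same closure $[v_*,v^*]$; your claim that any $v^\sigma\in{\cal V}^{\S}$ ``lies between two deterministic values'' fails precisely in the critical case $v^\sigma=v^*=\sup_{\pi\in\Pi}v^\pi$, where nothing a priori guarantees a deterministic maximizer exists. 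The paper fills this hole with Lemma~\ref{lredcountA} and Theorem~\ref{toptimsingle}: if a stationary policy attains the single-criterion supremum, then \emph{every} policy of the countable-action submodel it generates attains the same value, hence so does some deterministic policy (Corollary~\ref{coropt2}). The same theorem is the engine of the dimensionality reduction (Theorem~\ref{tdimred}) at boundary points of ${\cal V}$: a supporting hyperplane yields a scalarized reward maximized at that point, Theorem~\ref{toptimsingle} forces the whole performance set of the associated submodel to lie in the hyperplane, so one coordinate becomes an affine function of the others and the induction hypothesis applies. Your alternative mechanism --- ``fix all but one coordinate, then adjust the remaining coordinate by a single-criterion connectedness argument'' --- would not work as stated: moving along a path in ${\cal M}^\dF$ to adjust one coordinate generically moves all the other coordinates as well, and there is no submodel in your proposal in which the other coordinates are pinned down.

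A second, related gap is that your closing containment ``$\mathcal{V}^{\S}\subseteq\mathrm{conv}\,\mathcal{V}^{\dF}=\mathcal{V}^{\dF}$'' is circular: once ${\cal V}^\dF$ is known to be convex, that containment \emph{is} the theorem being proved, and in the atomless Borel setting there is no analogue of the finite-state convex-hull result of Feinberg and Rothblum~\cite[Theorem 6.1]{FR} to invoke. What the separating-hyperplane argument combined with \cite{F92} actually gives is only ${\cal V}\subset\bar{{\cal V}}^\dF$ (the paper's Lemma~\ref{lclos}); this suffices for points of ${\cal V}\setminus\partial({\cal V})$ (since for a convex set the interior of its closure equals its interior), but points of $\partial({\cal V})$ must then be handled by the dimensionality reduction and induction --- again resting on Theorem~\ref{toptimsingle}. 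In short, your proposal contains the connectedness half of the paper's proof but omits the submodel-optimality half, and the theorem cannot be closed without it.
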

The following corollary is an equivalent formulation of Theorem~\ref{tgmain}.
\begin{corollary}\label{cthmain} For a uniformly absorbing atomless MDP, for every policy $\pi\in\Pi$ there exists a deterministic policy $\phi$ such that $v^\phi=v^\pi.$
\end{corollary}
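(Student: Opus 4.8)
The plan is to reduce Theorem~\ref{tgmain} to three structural facts about the three nested families of measures described in the introduction—strategic measures, occupancy measures, and performance vectors—and then close the argument by induction on the number of criteria $N$. First I would recall that $\mathcal{V}^{\mathbb{F}}\subset\mathcal{V}$ is trivial, so the whole content is the reverse inclusion $\mathcal{V}\subset\mathcal{V}^{\mathbb{F}}$. The natural strategy is to show that every performance vector achievable by \emph{some} policy is already achievable by a \emph{stationary} policy, and then that every stationary performance vector is achievable by a \emph{deterministic} policy. The first reduction is standard for absorbing MDPs: since the MDP is uniformly absorbing with fixed initial distribution $\mu$, for each $\pi\in\Pi$ there is a stationary policy with the same occupancy measure, hence the same performance vector (the cited results \cite{Al,FR,b7,Pi98}). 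The genuinely new work, and the main obstacle, is the second reduction—passing from stationary to deterministic policies while keeping the performance vector fixed—and this is where the atomless hypothesis is essential.

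For that second reduction I would work at the level of occupancy measures under the topology of setwise convergence. The key geometric input, which I expect to be proved in Section~\ref{s8}, is that the set of occupancy measures generated by deterministic policies is \emph{path-connected} in this topology for an atomless MDP. Granting this, the map sending an occupancy measure to its performance vector (integration of $r$ against the occupancy measure) is continuous for the setwise topology because $r$ is bounded and the convergence in \eqref{defdiscpos} is uniform for a uniformly absorbing MDP. A continuous image of a path-connected set is path-connected, so $\mathcal{V}^{\mathbb{F}}$ is a connected subset of $\R^N$. When $N=1$ a connected subset of the line is an interval, hence convex; combined with the endpoint fact that the extreme performance values over $\mathbb{F}$ already cover those over all stationary policies, this settles the single-criterion case and furnishes the base of the induction.

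The multi-criterion case is the hard part, and the plan is to handle it by induction on $N$ using the dimensionality-reduction technique announced for Section~\ref{s6}. Suppose the result holds for fewer than $N$ criteria. Given a target vector $v=v^\pi\in\mathcal{V}$, by the first reduction pick a stationary $\sigma$ with $v^\sigma=v$. I would fix the value of the first $N-1$ coordinates by passing to an auxiliary submodel in which the achieved values $v^{(1)},\dots,v^{(N-1)}$ are frozen as constraints, reducing the free degrees of freedom; within that submodel the remaining single coordinate ranges over a connected, hence convex, set of deterministic performance values by the base case applied to the submodel. The crux is to show that the submodel constructed from a stationary policy is again atomless and uniformly absorbing, so that the inductive hypothesis and the path-connectedness of Section~\ref{s8} apply to it, and that one can realize the exact target value $v^{(N)}$ by a deterministic policy of the \emph{original} model. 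Making the dimensionality reduction compatible with determinism—so that a deterministic selector in the submodel lifts to a deterministic selector in the original MDP attaining all $N$ coordinates simultaneously—is the delicate step, and it is precisely here that the MDP-generated-by-two-policies construction of Section~\ref{s7} and its continuity properties are needed to interpolate between candidate deterministic policies along a path without disturbing the frozen coordinates. Once the inductive step is in place, Corollary~\ref{cthmain} is merely a restatement of the set equality $\mathcal{V}^{\mathbb{F}}=\mathcal{V}$.
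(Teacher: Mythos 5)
Your first two stages do match the paper: the reduction from arbitrary to stationary policies via occupancy measures (Theorem~\ref{th3.4RS}), and the use of path-connectedness of ${\cal M}^\dF$ in total variation (Theorem~\ref{L1}) together with continuity of integration of the bounded reward to get connectedness, hence convexity, of ${\cal V}^\dF$ for $N=1$ (Corollaries~\ref{cL1}--\ref{cN1}). The genuine gap is in your inductive step. You propose to ``freeze'' the achieved values $v^{(1)},\dots,v^{(N-1)}$ as constraints by passing to an auxiliary submodel, then apply the base case to the last coordinate. But a submodel, in the paper's sense and yours, is obtained by shrinking the action sets $A(x)$; the family of policies whose performance vector has prescribed values in the first $N-1$ coordinates is not of this form, and you give no construction that would make it so. Freezing even one coordinate at an arbitrary prescribed level by restricting actions is impossible in general: the only mechanism available is Theorem~\ref{tdimred}, which works only at \emph{boundary} points $v\in\partial({\cal V})$, freezes a supporting-hyperplane functional $\langle\tilde b,\cdot\rangle$ (not a coordinate, and only at its extreme level over ${\cal V}$), and rests on the nontrivial Theorem~\ref{toptimsingle}: in the countable-action submodel carried by an optimal stationary policy, \emph{every} policy is optimal for the scalarized reward --- this is where uniform absorption does real work. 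Your sketch never introduces supporting hyperplanes, never separates boundary from interior points of ${\cal V}$, and attempts to reduce from $N$ criteria to $1$ in a single step, whereas the paper's reduction removes one linear relation at a time, from $N+1$ to $N$.

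Relatedly, the crux you name --- that the submodel inherits the atomless and uniformly absorbing properties --- is the trivial part (the paper dismisses it in one sentence). The hard part your plan omits is the inductive proof that ${\cal V}^\dF$ is \emph{convex} for $N+1$ criteria (Lemma~\ref{lcovdeFF}): one interpolates between two deterministic policies through the increasing family of submodels $A^\alpha(\cdot)$ built from the sets $\X_\alpha$ of \eqref{edefxalppha}, proves continuity of $\alpha\mapsto d(\hat v,{\cal V}(\alpha))$ via Theorem~\ref{th6.2}, takes the largest $\hat\alpha$ with $\hat v\in{\cal V}(\hat\alpha)$, shows $\hat v$ must then lie on $\partial({\cal V}(\hat\alpha))$, and only at that point invokes dimensionality reduction and the induction hypothesis. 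Without this convexity, the treatment of targets interior to ${\cal V}$ also collapses, since Lemma~\ref{lclos} (which places ${\cal V}$ inside the closure of ${\cal V}^\dF$, so that interior points of ${\cal V}$ fall in ${\cal V}^\dF$) is available only once convexity of ${\cal V}^\dF$ is known. So your skeleton is the paper's, but the engine of the induction --- supporting hyperplanes at the boundary plus the convexity lemma for interior points --- is missing.
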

For total-reward MDPs, the performance set $\cal V$ is convex.  This simple fact follows from the convexity of the set of strategic measures; see Dynkin and Yushkevich~\cite[Section 5.5]{DY} or, for absorbing MDPs,  see Lemma~\ref{lem3.1} below.  This fact and Theorem~\ref{tgmain} imply the following corollary.
\begin{corollary}\label{convex}
For a uniformly absorbing atomless MDP, the set ${\cal V}^\dF$ is convex.
\end{corollary}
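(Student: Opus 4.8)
The plan is to derive Corollary~\ref{convex} as an immediate consequence of the two facts already assembled in the excerpt: the convexity of the performance set $\cal V$ for total-reward MDPs, and the identity ${\cal V}^\dF = {\cal V}$ of Theorem~\ref{tgmain}. Since this is a corollary stated directly after those two results, the proof is essentially a one-line substitution, but I would organize it so that each ingredient is visibly invoked.

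First I would recall that for a uniformly absorbing atomless MDP the hypotheses of Theorem~\ref{tgmain} are met, so that theorem applies and gives ${\cal V}^\dF = {\cal V}$. Separately, I would cite the convexity of $\cal V$, which the excerpt attributes to the convexity of the set of strategic measures (Dynkin and Yushkevich~\cite[Section 5.5]{DY}, or Lemma~\ref{lem3.1} for the absorbing case): since the map from strategic measures to occupancy measures to performance vectors is affine (it is given by integration of the fixed reward vector-function $r$ against the occupancy measure), the image of a convex set is convex, so $\cal V$ is convex. Combining these, ${\cal V}^\dF = {\cal V}$ is convex, which is exactly the assertion.

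Concretely, I would write: By Theorem~\ref{tgmain}, for a uniformly absorbing atomless MDP we have ${\cal V}^\dF = {\cal V}$. The set $\cal V$ is convex because it is the image of the convex set of strategic measures under an affine map, as noted above. Hence ${\cal V}^\dF$ is convex, completing the proof.

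There is no genuine obstacle here; the entire mathematical content lives in Theorem~\ref{tgmain} and in the convexity of the strategic-measure set, both of which are available. The only point requiring a sentence of care is the affineness of the performance map, ensuring that convexity transfers from strategic measures to $\cal V$; but this is routine because $v^\pi$ depends linearly on the occupancy measure through integration of the bounded $r$, and the excerpt explicitly flags this passage of convexity along the chain of projections. So the proof is simply the conjunction of the equality ${\cal V}^\dF = {\cal V}$ with the independently established convexity of $\cal V$.
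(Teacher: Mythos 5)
Your proof is correct and coincides with the paper's own argument: the paper likewise obtains the corollary by combining Theorem~\ref{tgmain} (${\cal V}^\dF={\cal V}$) with the convexity of ${\cal V}$, which it attributes to the convexity of the set of strategic measures (Dynkin and Yushkevich, or Lemma~\ref{lem3.1} for absorbing MDPs). Your extra remark that the performance map is affine on occupancy/strategic measures is exactly the mechanism the paper invokes via Lemma~\ref{lem3.1}, so there is nothing to change.
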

Let us show that Theorem~\ref{tgmain} is more general than Theorem~\ref{tmain}. Recall that, if an initial probability distribution $\nu$ is concentrated at one state $x\in\X,$ then, according to Remark~\ref{rem1}, we usually write $E_x^\pi$ instead of $E_\nu^\pi.$  The following lemma provides a natural sufficient condition under which an absorbing MDP is  uniformly absorbing.
\begin{lemma}\label{lbound} Consider an MDP with a standard Borel state space $\bar \X$ and with a state $\bar x\in\bar \X$ such that $A(\bar x)$ is a singleton and $p({\bar x}|{\bar x}, {\bar a})=1,$ $r({\bar x}, {\bar a})=0,$  where $A(\bar{x})=\{\bar a\}.$ If there is a finite constant $L$ such that $E_x^\phi T^{\bar x}<L$ for all $x\in \X={\bar{\X}}\setminus\{\bar{x}\}$ and for all $\phi\in\dF,$  then this MDP is uniformly absorbing for all initial state distributions $\mu$ on $\X.$
\end{lemma}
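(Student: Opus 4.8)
The plan is to reduce the statement to a single per-state bound and then obtain the tail estimate \eqref{elimsup} by a Markov-property argument. First I would upgrade the hypothesis from deterministic policies to all policies, one state at a time. Fixing $x\in\X$ and viewing the process with the degenerate initial distribution concentrated at $x$, the assumption gives $E_x^\phi T^{\bar x}<L$ for every $\phi\in\dF$. Applying the equivalence of Feinberg and Rothblum~\cite[p.~132]{FR}, namely that the bound \eqref{eqabsiii} holds for all policies if and only if it holds for all deterministic policies, now to the Dirac initial distribution at $x$, I obtain $\sup_{\pi\in\Pi}E_x^\pi T^{\bar x}\le L$ for every $x\in\X$.

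Next, for an arbitrary initial distribution $\mu$ on $\X$ and any $\pi\in\Pi$, conditioning on the initial state yields $E_\mu^\pi T^{\bar x}=\int_\X E_x^\pi T^{\bar x}\,\mu(dx)\le L$, since under $P_\mu^\pi$ the conditional law of the trajectory given $x_0=x$ is exactly $P_x^\pi$. Together with the stated properties of $\bar x$ and with $\mu(\bar x)=0$ (as $\mu$ is carried by $\X$), this establishes Definition~\ref{defabs}(iii) and hence that the MDP is absorbing for $\mu$.

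For the uniform statement I would use the identity $\sum_{t=n}^\infty I\{t<T^{\bar x}\}=(T^{\bar x}-n)^+$. On the event $\{T^{\bar x}>n\}$ the process occupies some state $x_n\in\X$ at time $n$, and by the Markov property the evolution after time $n$ is governed by a policy started from $x_n$, so the per-state bound controls the conditional expected residual time by $L$. Consequently
$$E_\mu^\pi\sum_{t=n}^\infty I\{t<T^{\bar x}\}=E_\mu^\pi(T^{\bar x}-n)^+\le L\,P_\mu^\pi\{T^{\bar x}>n\}\le\frac{L}{n}\,E_\mu^\pi T^{\bar x}\le\frac{L^2}{n},$$
using Markov's inequality and the absorbing bound just proved. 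Since $L^2/n$ does not depend on $\pi$, taking the supremum over $\pi\in\dM\subset\Pi$ and letting $n\to\infty$ gives \eqref{elimsup}, so the MDP is uniformly absorbing.

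I expect the delicate step to be the Markov-property reduction in the last display: for a general history-dependent randomized policy one must verify that, conditionally on the history up to time $n$ ending at $x_n=y$, the remaining evolution is a legitimate policy started at $y$ whose expected hitting time of $\bar x$ is at most $L$. This is precisely where the upgrade from $\dF$ to $\Pi$ in the per-state bound is essential, because the continuation policy is in general neither deterministic nor Markov; once that bound is available for arbitrary policies, the remaining estimates are routine.
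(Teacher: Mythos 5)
Your proposal is correct and follows essentially the same route as the paper's proof: upgrade the per-state bound from $\dF$ to all policies via the Feinberg--Rothblum equivalence \cite[p.~132]{FR}, integrate over $\mu$ to establish the absorbing property, and then bound the tail by $L\,P^\pi\{T^{\bar x}>n\}\le L^2/n$ by conditioning at time $n$ and applying Markov's inequality. The only difference is that the paper carries out the tail estimate only for nonrandomized Markov policies $\phi\in\dM$ (all that \eqref{elimsup} requires), for which the continuation policy is simply the shifted policy $\phi^{+n}=(\phi_n,\phi_{n+1},\ldots)$, so the conditional-decomposition care you flag for general history-dependent policies never arises; your version is a harmless strengthening, consistent with the paper's remark (citing Feinberg~\cite[Theorem 3]{F82}) that the supremum in \eqref{elimsup} over $\dM$ equals that over $\Pi$.
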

\begin{proof}  Let us fix an arbitrary initial probability distribution $\mu$ on $ \X.$ As is mentioned after Definition~\ref{defabs}, $\sup_{\pi\in\Pi} E_x^\pi T^{\bar x}=\sup_{\phi\in\dF} E_x^\phi T^{\bar x}$ for all $x\in\X.$ Therefore, $E_x^\pi T^{\bar x}\le L$ for all $x\in \X$ and  for all $\pi\in\Pi.$
 This implies that $E^\pi T^{\bar x}\le L$ for all $\pi\in\Pi.$
In view of Markov's inequality, for an arbitrary policy $\pi\in\Pi$ and for  $n=0,1,\ldots,$
\begin{equation}\label{eqMar}
P^\pi \{T^{\bar x}> n\}\le (n+1)^{-1}E^\pi T^{\bar x}\le (n+1)^{-1}L.
\end{equation}
For an arbitrary nonrandomized Markov policy $\phi=(\phi_0,\phi_1,\ldots)$ and for $n=0,1,\ldots,$ let us define by $\phi^{+n}$ the shifted nonrandomized Markov policy $\phi^{+n}=(\phi^n, \phi^{n+1},\ldots ).$  Then
\[ E^\phi\sum_{t=n}^\infty I\{t<T^{\bar x}\}=E^\phi E_{x_n}^{\phi^{+n}} \sum_{t=0}^\infty I\{t<T^{\bar x}\}=E^\phi E_{x_n}^{\phi^{+n}}T^{\bar x}\\ \le   E^\phi I\{n<T^{\bar x}\}L=  LP^\phi\{T^{\bar x}>n\}\le (n+1)^{-1}L^2,\]
which implies \eqref{elimsup}, where the first inequality follows from $\{x_n\in \X\} = \{n<T^{\bar x}\}$ $P^\pi$-{\it a.s.} and $E_x^\pi T^{\bar x}\le L$ for all $\pi\in\Pi$ and all $x\in \X,$ and the last inequality follows from \eqref{eqMar}. 
\end{proof}
\begin{lemma}\label{lreduction}
Theorem~\ref{tgmain} implies Theorem~\ref{tmain}.
\end{lemma}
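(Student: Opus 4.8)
The plan is to embed the discounted MDP into a uniformly absorbing atomless MDP to which Theorem~\ref{tgmain} applies, and then to check that this embedding preserves the performance vectors of all policies and of all deterministic policies. The construction is the classical killing reduction: adjoin to $\X$ a new isolated absorbing state $\bar x$, form $\bar\X:=\X\cup\{\bar x\}$, keep $\A$ and the sets $A(x)$ for $x\in\X$, put $A(\bar x):=\{\bar a\}$ for a fixed $\bar a\in\A$, set $r(\bar x,\bar a):=0$ with $r$ otherwise unchanged, retain the initial distribution $\mu$ (which already lives on $\X$), and define
\[
\bar p(\cdot\,|\,x,a):=\beta\, p(\cdot\,|\,x,a)+(1-\beta)\,\delta_{\bar x}(\cdot),\qquad x\in\X,\ a\in A(x),
\]
together with $\bar p(\bar x\,|\,\bar x,\bar a):=1$. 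Thus at each step the process is killed (sent to $\bar x$) with probability $1-\beta$, independently of the history and of the chosen action.

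Next I would verify the two structural hypotheses for the constructed model. Because killing happens with probability $1-\beta$ at every step regardless of the action, $\bar P_x^\pi\{T^{\bar x}>t\}=\beta^t$ for all $x\in\X$, all $\pi$, and all $t$, so $\bar E_x^\phi T^{\bar x}=\sum_{t=0}^\infty\beta^t=(1-\beta)^{-1}$ for every $\phi\in\dF$ and $x\in\X$; Lemma~\ref{lbound} then yields that the model is uniformly absorbing for every initial distribution on $\X$. For the atomless property of Definition~\ref{def2.4} I only need $\bar p(y\,|\,x,a)=0$ for $x,y\in\X$, and this holds since $\bar p(y\,|\,x,a)=\beta\, p(y\,|\,x,a)=0$ by atomlessness of the original $p$; together with $\mu(x)=0$ on $\X$ this gives that the model is atomless. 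Note that the atom of $\bar p$ at $\bar x$ is harmless precisely because $\bar x\notin\X$.

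The heart of the argument is the identification of policies and the resulting reward identity. Since $\bar x$ is absorbing and carries only the forced action $\bar a$, a policy of the absorbing MDP is completely determined by its behavior on histories lying in $\X$; this yields a bijection between $\Pi$ for the discounted model and $\Pi$ for the absorbing model, restricting to a bijection of the deterministic classes $\dF$ (extend a selector $\phi$ on $\X$ by $\phi(\bar x):=\bar a$, which is measurable because $\{\bar x\}$ is). Under this identification I would prove, by induction on $t$ using the one-step factorization $\bar p(C\,|\,x,a)=\beta\,p(C\,|\,x,a)$ for $C\subseteq\X$, that for every bounded measurable $g$ on $\H_t\times\A$,
\[
\bar E^\pi\!\left[g(x_0,a_0,\ldots,x_t,a_t)\,I\{x_t\in\X\}\right]=\beta^t\,E^\pi\!\left[g(x_0,a_0,\ldots,x_t,a_t)\right],
\]
where $I\{x_t\in\X\}=I\{T^{\bar x}>t\}$ collects one factor $\beta$ per surviving transition while the common action kernel $\pi_t$ handles $a_t$. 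Taking $g=r$ at each stage, using $r(\bar x,\bar a)=0$, and summing over $t$ (the interchange being legitimate since $r$ is bounded and the model absorbing, as in \eqref{defvpigen}) gives $v^\pi=\bar E^\pi\sum_{t=0}^\infty r(x_t,a_t)=\sum_{t=0}^\infty\beta^t E^\pi r(x_t,a_t)=v_\beta^\pi$. Hence ${\cal V}={\cal V}_\beta$ and ${\cal V}^\dF={\cal V}_\beta^\dF$, and Theorem~\ref{tgmain} applied to the constructed model gives ${\cal V}^\dF={\cal V}$; chaining the equalities produces ${\cal V}_\beta^\dF={\cal V}_\beta$, which is Theorem~\ref{tmain}.

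I expect the main difficulty to be bookkeeping rather than anything conceptual: one must verify that the killing construction preserves atomlessness in exactly the form of Definition~\ref{def2.4}, establish the per-step factor-$\beta$ identity together with the interchange of summation and expectation, and confirm that the bijection of policies respects the class $\dF$ at the level of measurable selectors. None of these steps is deep, but each must be stated with enough care that the two models are genuinely interchangeable for the quantities $v_\beta^\pi$ and $v^\pi$.
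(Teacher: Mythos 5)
Your proposal is correct and follows essentially the same route as the paper: the paper uses exactly this killing transformation ($\bar p=\beta p+(1-\beta)\delta_{\bar x}$, with $r(\bar x,\bar a)=0$), observes that atomlessness is preserved and that $E_x^\pi T^{\bar x}=(1-\beta)^{-1}$ so that Lemma~\ref{lbound} gives the uniformly absorbing property, and then applies Theorem~\ref{tgmain}. The only difference is one of detail: the paper simply asserts the identities ${\cal V}^\dF={\cal V}_\beta^\dF$ and ${\cal V}={\cal V}_\beta$ as well known, whereas you spell out the per-step factor-$\beta$ identity and the policy bijection that justify them.
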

\begin{proof}
Consider a discounted MDP.  The following  transformation into an absorbing MDP is well-known; see e.g., Altman~\cite[p. 137]{Al}. Let us add an additional point $\bar x$ to the state space $\X$ and consider the new transition probability $\bar p$ defined by
\begin{equation*} {\bar p}(Y|x,a):= 
\begin{cases}
\beta p(Y|x,a),
&\mbox{if $x\in \X,$ $Y\in\cal X$},\\
1-\beta, 
&\mbox{if\ $x\in \X,$ $Y=\{\bar x\}$},\\
1, &\mbox{if\ $x={\bar x}\in Y.$}
\end{cases}
\end{equation*}
Then ${\cal V}^\dF = {\cal V}_\beta^\dF$ and $ {\cal V}={\cal V}_\beta.$ The new MDPs is absorbing.  It is atomless if and only if the original discounted MDP is atomless. Since $E_x^\pi T^{\bar x}=(1-\beta)^{-1},$ Lemma~\ref{lbound} implies that the new model is uniformly absorbing.
\end{proof}
Of course, the transformation of a discounted MDP into an absorbing one is trivial.  However, under certain conditions it is also possible to transform an absorbing MDP into a discounted one; see Feinberg and Huang~\cite{FH, FH1}.

The following example describes  an absorbing MDP, which is not uniformly absorbing.
\begin{example}\label{ex3.12}
Let $\X:=\{(i,j): i=0,1,\dots,\ j=0,1,\ldots,2^i-1\}$, ${\bar x}:=0,$ A=\{c,s\}, where $c$ stands for ``continue" and $s$ stands for ``stop", and
\begin{equation*}A(x):=\begin{cases} \{c,s\},  &{\rm if\ }  x=(i,0),\ i=0,1,\ldots,\\
\{s\}   &{\rm otherwise,}
\end{cases}
\end{equation*}
and for $i=0,1,\ldots$
\begin{equation*}
p(y|x,a)= \begin{cases} 0.5, &{\rm if\ } a=c,\ x=(i,0),\  y= 0 {\ \rm or\ } y=(i+1,0),\\
1, &{\rm if\ } a=s\ {\rm and\ either}\ x=(i,j),\ j=0,\ldots, 2^i-2, \ y=(i,j+1)\ {\rm or}\ x=(i,2^i-1),\ \\ & y=0.\end{cases}
\end{equation*}
In addition $\mu (0,0)=1.$  In this example, the process starts at the state $(0,0).$ At each state $(i,0),$ $i=0,1\ldots,$ the decision maker can either continue or stop the process.  If the process is continued at state $(i,0),$ then it  moves  with probabilities 0.5 either to state $(i+1,0)$  or to state $\bar x.$  If the process is stopped at state $(i,0)$, then it makes $2^i$ additional deterministic moves until it hits the absorbing state $\bar x=0$ and stops.
Let $\phi^\infty$ be the deterministic policy that always chooses an action $c$ at the states $(i,0),$ $i=0,1,\ldots\ .$  Under this policy, $T^{\bar x}$ has the geometric distribution with the success probability 0.5 at each step. Therefore, $E^{\phi^\infty} T^{\bar x} =2.$
 Now let $\phi^n$ be a deterministic policy  choosing the action $s$ at the state $(n,0)$ and the action $c$ at the states $(i,0)$ with $i=0,1,\ldots,n-1,$ where  $n=0,1,\ldots\ .$
Then $E^{\phi^n} T^{\bar x}=E^{\phi^n}[\sum_{t=0}^{n-1} I\{t<T^{\bar x}\}+2^n I\{t<T^{\bar x}\}]= \sum_{t=0}^{n-1} 2^{-t}+2^n2^{-n}=3-2^{-n+1}.$  Thus, $E^\phi T^{\bar x}\le 3$ for all $\phi\in\dF.$ So, this MDP is absorbing.
However, $ \lim_{n\to\infty}\sup_{\pi\in \dM} E^\pi\sum_{t=n}^\infty I\{t<T^{\bar x}\}\ge\lim_{n\to\infty}E^{\phi^n}\sum_{t=n}^\infty I\{t<T^{\bar x}\}= \lim_{n\to\infty}2^{-n}2^n=1.$   Thus, this MDP is not uniformly absorbing. 
%
\end{example}

\section{Occupancy Measures and their Properties}\label{s4}
For an absorbing MDP, a policy $\pi,$ and an initial state distribution $\mu$ on $\X,$ the finite occupancy measure $Q^\pi(\cdot)$ on $\X\times \A$ is defined by
\[Q^\pi(Y\times B):= E^\pi \sum_{t=0}^{T^{\bar x}-1} I\{x_t\in Y, a_t\in B\}=\sum_{t=0}^\infty P^\pi\{x_t\in Y, a_t\in B\},\qquad Y\in {\cal X},\ B\in {\cal A}.
\]
Let $q^\pi( Y):=Q^\pi(Y\times \A),$ where $Y\in{\cal X}.$  Observe that $q^\pi( \X)=E^\pi T^{\bar x}\le L.$ In addition,
\begin{equation}\label{eqingjf} v^\pi=\int_\X\int_A r(x,a)Q^\pi(dxda).
\end{equation}
 The set of occupancy measures for the initial distribution $\mu$ and for all policies from $\Delta\subset\Pi$ is
\[{\cal M}^\Delta :=\{Q^\pi(\cdot): \pi\in\Delta\}.
\]
We set ${\cal M}:={\cal M}^\Pi.$  For an arbitrary policy $\pi$ there exists a  stationary policy $\sigma\in\S$ such that
\begin{equation}\label{edefsig}
Q^\pi(Y\times B)=\int_Y \sigma (B|x)q^\pi(dx),\qquad Y\in {\cal X},\ B\in {\cal A},
\end{equation}
and \eqref{edefsig} implies that
\begin{equation}\label{eqpqs}
Q^\sigma(\cdot)=Q^\pi(\cdot);
\end{equation}
see \cite[Lemmas 4.1, 4.2]{FR}. Therefore,
\begin{equation}\label{emmummu}
{\cal M}^{\S}={\cal M},
\end{equation}
and this set is convex; \cite[Cor. 4.3]{FR}. These properties imply the corresponding properties of performance sets stated in the following lemma.  Recall that the initial distribution $\mu$  is fixed.
\begin{lemma}\label{lem3.1}
 For an absorbing MDP  the equality ${\cal V}^\S={\cal V}$ holds, and this set is convex.
\end{lemma}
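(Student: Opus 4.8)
The plan is to deduce both assertions directly from the properties of occupancy measures assembled just above, exploiting the crucial fact that the performance vector of a policy depends on the policy only through its occupancy measure.

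First I would establish the equality ${\cal V}^\S = {\cal V}$. The inclusion ${\cal V}^\S \subseteq {\cal V}$ is immediate from $\S \subseteq \Pi$. For the reverse inclusion, fix $v \in {\cal V}$, say $v = v^\pi$ for some $\pi \in \Pi$. By \eqref{eqpqs} there exists a stationary policy $\sigma \in \S$ with $Q^\sigma = Q^\pi$. Formula \eqref{eqingjf} expresses $v^\pi$ as the integral of $r$ against $Q^\pi$, so $v^\sigma = \int_\X\int_A r\, dQ^\sigma = \int_\X\int_A r\, dQ^\pi = v^\pi = v$, whence $v \in {\cal V}^\S$. This yields ${\cal V} \subseteq {\cal V}^\S$ and therefore equality.

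For convexity, I would observe that \eqref{eqingjf} defines a linear map $\Lambda\colon Q \mapsto \int_\X\int_A r\, dQ$ from finite measures on $\X\times\A$ into $\R^N$, which is well-defined and finite on each occupancy measure because $r$ is bounded and $q^\pi(\X) = E^\pi T^{\bar x} \le L$. Since $v^\pi = \Lambda(Q^\pi)$ and, as $\pi$ ranges over $\Pi$, the measure $Q^\pi$ ranges over all of ${\cal M}$, we have ${\cal V} = \Lambda({\cal M})$. The set ${\cal M}$ is convex by \cite[Cor. 4.3]{FR} (recall \eqref{emmummu}), and the linear image of a convex set is convex; concretely, for $v_1 = \Lambda(Q_1)$ and $v_2 = \Lambda(Q_2)$ with $Q_1, Q_2 \in {\cal M}$ and $\lambda \in [0,1]$, one has $\lambda v_1 + (1-\lambda) v_2 = \Lambda\bigl(\lambda Q_1 + (1-\lambda) Q_2\bigr)$ with $\lambda Q_1 + (1-\lambda) Q_2 \in {\cal M}$, so $\lambda v_1 + (1-\lambda) v_2 \in {\cal V}$.

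The argument has no genuine obstacle once \eqref{eqpqs}, \eqref{emmummu}, and the convexity of ${\cal M}$ are available; the only point requiring care is the verification that the performance vector factors through the occupancy measure, that is, that $v^\pi$ is a linear image of $Q^\pi$ via \eqref{eqingjf}. This single observation is what simultaneously lets the matching of stationary policies and the convexity transfer from ${\cal M}$ to ${\cal V}$.
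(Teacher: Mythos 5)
Your proof is correct and follows essentially the same route as the paper: the paper's own (one-line) proof invokes exactly the ingredients you use, namely that $v^\pi$ factors linearly through $Q^\pi$ via \eqref{eqingjf}, that ${\cal M}^\S={\cal M}$ by \eqref{emmummu}, and that ${\cal M}$ is convex, so ${\cal V}=\Lambda({\cal M})$ inherits both the equality and the convexity. Your write-up simply makes these steps explicit.
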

\begin{proof}
The lemma follows from \eqref{eqingjf}, \eqref{emmummu}, and the convexity of ${\cal M}.$
\end{proof}
For an absorbing MDP with the initial state distribution $\mu,$ for  $\pi\in\Pi, $  and for $Y\in{\cal X},$ define
\[q_n^\pi(Y):=P^\pi\{x_n\in Y\},\qquad\qquad n=0,1,\ldots\ .
\]
Then
\begin{equation}\label{eqef1}
q^\pi(Y):=\sum_{n=0}^\infty E^\pi I\{x_n\in Y\}=\sum_{n=0}^\infty P^\pi (x_n\in Y)=\sum_{n=0}^\infty q_n^\pi(Y).
\end{equation}
%
We observe that $q_0^\pi(Y)=\mu(Y)$ and
\[q_n^\pi(Y)=\int_\X P_x^\pi\{x_n\in Y\}\mu(dx), \quad Y\in{\cal X},\ \pi\in\Pi, n=0,1,\ldots\ .
\]

In particular, $q^\pi(Y)=0$ if and only if $q_n^\pi(Y)=0$ for all $n=0,1,\ldots,$ $Y\in\cal X.$ This implies that $q^\sigma\ll q^\pi$ for  policies $\pi$ and $\sigma,$ if $q_n^\sigma\ll q_n^\pi$  for all $n=0,1,\ldots,$ where the symbol $\ll$ means absolute continuity.

Observe that, for a stationary policy $\pi\in\S,$ $n=0,1,\ldots,$ and $Y\in {\cal X},$
\begin{equation}\label{eqef4}
q_{n+1}^\pi (Y)=
\int_\X\int_\A p(Y|x,a)\pi(da|x)q_n^\pi(dx).
\end{equation}
Formulae \eqref{eqef1} and \eqref{eqef4} imply that for $\pi \in \S$
\begin{equation}\label{eqef5}
q^\pi (Y)=\mu(Y)+\int_\X\int_\A p(Y|x,a)\pi(da|x)q^\pi(dx).
\end{equation}

\begin{lemma}\label{Lemma A2EF.}  For two stationary policies $\pi$ and $\sigma,$ if $ \sigma(\cdot|x)\ll\pi(\cdot|x)$ for all $x\in \X,$ then
$q_n^\sigma\ll q_n^\pi,$ for all $n=0,1,\ldots,$ and therefore $q^\sigma\ll q^\pi.$\end{lemma}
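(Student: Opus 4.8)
The plan is to prove the first assertion, that $q_n^\sigma\ll q_n^\pi$ for every $n,$ by induction on $n,$ and then to obtain $q^\sigma\ll q^\pi$ for free from the observation already recorded after \eqref{eqef1}, namely that $q^\sigma\ll q^\pi$ holds whenever $q_n^\sigma\ll q_n^\pi$ holds for all $n=0,1,\ldots\ .$ Throughout I would work directly with the forward recursion \eqref{eqef4} for the marginals $q_n,$ which is available precisely because both $\pi$ and $\sigma$ are stationary.

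For the base case $n=0$ there is nothing to do: $q_0^\sigma(Y)=\mu(Y)=q_0^\pi(Y),$ so the two measures coincide and in particular $q_0^\sigma\ll q_0^\pi.$ For the inductive step I would assume $q_n^\sigma\ll q_n^\pi$ and show that $q_{n+1}^\pi(Y)=0$ forces $q_{n+1}^\sigma(Y)=0.$ Fix $Y\in\cal X$ with $q_{n+1}^\pi(Y)=0.$ Since the integrand in \eqref{eqef4} is nonnegative, the equality $\int_\X\bigl(\int_\A p(Y|x,a)\pi(da|x)\bigr)q_n^\pi(dx)=0$ implies that the inner integral $g(x):=\int_\A p(Y|x,a)\pi(da|x)$ vanishes for $q_n^\pi$-almost all $x.$ The function $g$ is measurable because $\pi$ is a transition probability and $p(Y|\cdot,\cdot)$ is measurable, so the exceptional set $N:=\{x\in\X:\, g(x)>0\}$ is measurable and $q_n^\pi(N)=0.$

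The heart of the argument is a two-level almost-everywhere step. For $x\notin N$ one has $\int_\A p(Y|x,a)\pi(da|x)=0,$ and, the integrand being nonnegative, $p(Y|x,a)=0$ for $\pi(\cdot|x)$-almost all $a$; the hypothesis $\sigma(\cdot|x)\ll\pi(\cdot|x)$ then transfers this to $p(Y|x,a)=0$ for $\sigma(\cdot|x)$-almost all $a,$ so that $\int_\A p(Y|x,a)\sigma(da|x)=0.$ Splitting the representation of $q_{n+1}^\sigma$ coming from \eqref{eqef4} over $N$ and $\X\setminus N,$ the contribution from $\X\setminus N$ vanishes because the inner $\sigma$-integral is zero there, while the contribution from $N$ vanishes because $q_n^\sigma(N)=0$ by the inductive hypothesis applied to the $q_n^\pi$-null set $N.$ Hence $q_{n+1}^\sigma(Y)=0,$ completing the induction.

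The step I expect to require the most care is exactly this nested absolute continuity: the vanishing of $q_{n+1}^\pi(Y)$ only yields information $q_n^\pi$-a.e.\ in $x$ and, for such $x,$ $\pi(\cdot|x)$-a.e.\ in $a,$ and one must pass both to the $\sigma$-world without the two null-set notions interfering. The inductive hypothesis handles the state level and the pointwise domination $\sigma(\cdot|x)\ll\pi(\cdot|x)$ handles the action level, but it is worth verifying explicitly that $N$ is measurable, so that ``$q_n^\sigma(N)=0$'' is a legitimate consequence of $q_n^\sigma\ll q_n^\pi.$ Once the first assertion is in hand, the final claim $q^\sigma\ll q^\pi$ is immediate from \eqref{eqef1} and the equivalence noted there.
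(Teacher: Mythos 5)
Your proposal is correct and follows essentially the same route as the paper's proof: induction on $n$ using the recursion \eqref{eqef4}, with the inner (action-level) absolute continuity $\sigma(\cdot|x)\ll\pi(\cdot|x)$ killing the inner integral for $q_n^\pi$-almost all $x$ and the inductive hypothesis $q_n^\sigma\ll q_n^\pi$ transferring the null set to the $q_n^\sigma$-world, followed by the observation after \eqref{eqef1} to conclude $q^\sigma\ll q^\pi.$ Your splitting of the integral over the measurable exceptional set $N$ and its complement is just a more explicit rendering of the paper's ``$q_n^\pi$-a.e.\ implies $q_n^\sigma$-a.e.'' step.
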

\begin{proof}  For $n=0$ the statement is obvious since   $q_0^\pi=q_0^\sigma=\mu.$  Assume that $q_n^\sigma\ll q_n^\pi$ for some $n=0,1,\ldots\ .$  Consider a measurable subset $Y$ of $\X$ such that $ q_{n+1}^\pi(Y)=0.$ In view of equation \eqref{eqef4}, this means that
\[\int_\A p(Y|x,a)\pi(da|x)=0\qquad q_n^\pi- a.e.
\]
Since $ \sigma(\cdot|x)\ll\pi(\cdot|x)$ for all $x\in \X,$  as follows from the last equality,
\[\int_\A p(Y|x,a)\sigma(da|x)=0\qquad q_n^\pi- a.e.
\]
Since the integral in the left-hand part of the last equation is nonnegative and  $q_n^\sigma\ll q_n^\pi,$
\[\int_\A p(Y|x,a)\sigma(da|x)=0\qquad q_n^\sigma- a.e.,
\]
which yields
\[q_{n+1}^\sigma (Y)=\int_\X\int_\A p(Y|x,a)\sigma(da|x)q_n^\sigma(dx)=0.
\]
Thus $q_n^\sigma\ll q_n^\pi$  for all $n=0,1,\ldots,$ which implies $q^\sigma\ll q^\pi,$ as explained before \eqref{eqef4}.
\end{proof}
\begin{lemma}\label{lem3.3EF}
For an atomless absorbing MDP,  every occupancy measure $q^\pi(dx),$ where $\pi\in\Pi,$ is atomless.
\end{lemma}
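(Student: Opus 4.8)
The plan is to reduce the claim to showing that each of the measures $q_n^\pi$ assigns zero mass to every singleton $\{y\}$ with $y\in\X$, and then to exploit the one-step transition structure together with the atomless assumption. By \eqref{eqef1} we have $q^\pi(\{y\})=\sum_{n=0}^\infty q_n^\pi(\{y\})$, and since every term is nonnegative, it suffices to prove that $q_n^\pi(\{y\})=0$ for all $n=0,1,\ldots$ and all $y\in\X$.

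For $n=0$ this is immediate: $q_0^\pi(\{y\})=P^\pi\{x_0=y\}=\mu(\{y\})=0$, because the initial distribution $\mu$ is atomless by Definition~\ref{def2.4}. For $n\ge 1$, I would condition on the pair $(x_{n-1},a_{n-1})$ and use the defining property of the transition probability $p$ to write
\begin{equation*}
q_n^\pi(\{y\})=P^\pi\{x_n=y\}=E^\pi\bigl[p(\{y\}\,|\,x_{n-1},a_{n-1})\bigr],
\end{equation*}
which follows from the Ionescu--Tulcea construction of $P^\pi$. Thus the claim reduces to showing that the integrand vanishes $P^\pi$-almost surely.

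The crux, and the only genuinely delicate point, is that the atomless hypothesis in Definition~\ref{def2.4} controls $p(y|x,a)$ only for $x\in\X$ and $a\in A(x)$, whereas $P^\pi$-a.s. the pair $(x_{n-1},a_{n-1})$ may also sit on the absorbing state. I would therefore split into two cases. On the event $\{x_{n-1}\in\X\}$ we have $a_{n-1}\in A(x_{n-1})$, and since $y\in\X$ the atomless assumption $p(y|x,a)=0$ gives $p(\{y\}\,|\,x_{n-1},a_{n-1})=0$. On the complementary event $\{x_{n-1}=\bar x\}$ the only available action is $\bar a$, and the absorbing structure $p(\bar x|\bar x,\bar a)=1$ from Definition~\ref{defabs}(ii) forces $p(\{y\}\,|\,\bar x,\bar a)=0$ because $y\neq\bar x$. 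Since $\bar\X=\X\cup\{\bar x\}$, these two events exhaust the sample space, so $p(\{y\}\,|\,x_{n-1},a_{n-1})=0$ $P^\pi$-a.s.; hence $q_n^\pi(\{y\})=0$, and summing over $n$ yields $q^\pi(\{y\})=0$.

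An alternative route would first invoke \eqref{eqpqs} to replace an arbitrary $\pi$ by a stationary $\sigma$ with $q^\sigma=q^\pi$, and then apply the recursion \eqref{eqef4}: there the integration runs only over $x\in\X$, so $p(\{y\}|x,a)=0$ on the whole domain of integration and $q_{n+1}^\sigma(\{y\})=0$ follows at once, with no induction on $n$ needed. I nonetheless prefer the direct conditioning argument above, since it applies verbatim to every $\pi\in\Pi$ and cleanly isolates the contribution of the absorbing state $\bar x$, which is the only place where the argument does not follow purely from Definition~\ref{def2.4}.
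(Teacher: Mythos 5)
Your proof is correct, but it takes a different route from the paper's. The paper first reduces to stationary policies via \eqref{emmummu} (which rests on the nontrivial Feinberg--Rothblum splitting result behind \eqref{eqpqs}), and then runs an induction on $n$ using the recursion \eqref{eqef4}, which is only stated for $\pi\in\S$ and integrates over $\X$ alone, so the atomless hypothesis applies on the whole domain of integration. You instead work with an arbitrary $\pi\in\Pi$ directly, replacing \eqref{eqef4} by the Ionescu--Tulcea conditioning identity $q_n^\pi(\{y\})=E^\pi\bigl[p(\{y\}\,|\,x_{n-1},a_{n-1})\bigr]$ and handling the absorbing state $\bar x$ by an explicit case split using Definition~\ref{defabs}(ii); the case split is exactly what the paper's restriction to stationary policies and to integration over $\X$ in \eqref{eqef4} implicitly sidesteps. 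What your approach buys is self-containedness: it does not invoke the stationary-policy reduction at all, and it makes visible the one place where Definition~\ref{def2.4} is insufficient on its own. What the paper's approach buys is brevity, since \eqref{eqef1}, \eqref{eqef4}, and \eqref{emmummu} are already in place; your ``alternative route'' at the end is essentially the paper's proof, and your observation there is also accurate -- the paper phrases the step as an induction, but no induction hypothesis is actually needed, since $p(\{y\}|x,a)=0$ on the domain of integration makes $q_{n+1}^\pi(\{y\})=0$ outright.
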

\begin{proof}
In view of \eqref{emmummu}, it is sufficient to prove the lemma for stationary policies $\pi.$  Let $\pi\in \S.$ Then $q^\pi_0=\mu$ is an atomless measure.  If    $q^\pi_n$ is atomless for some $n=0,1,\ldots,$ then formula \eqref{eqef4} implies that the measure $q^\pi_{n+1}$ is atomless.  Thus, all the measures $q^\pi_n,$ $n=0,1,\ldots,$ are atomless.  Formula~\eqref{eqef1} implies that $q^\pi$ is atomless.
\end{proof}
%

The following theorem implies that ${\cal M}^{\S}={\cal M}$ and ${\cal V}^{\S}={\cal V}$ for an absorbing MDP. For discounted MDPs this result was discovered by Borkar~\cite{Bor}; see Borkar~\cite{Bor1} and Piunovskiy~\cite{Pi98} for additional references.
\begin{theorem} {\rm (Feinberg and Rothblum~\cite[Lemma 4.2]{FR})} \label{th3.4RS}
Let $\pi$ be an arbitrary policy for an absorbing MDP. Consider a stationary policy $\sigma$ such that $\sigma(B|x)=\frac{Q^\pi(dx,B)}{Q^\pi(dx,\A)}$ for each $B\in{\cal A}.$  Then the measures $Q^\sigma$ and $Q^\pi$ coincide and therefore $v^\sigma=v^\pi.$
\end{theorem}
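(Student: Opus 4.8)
The plan is to reduce the identity $Q^\sigma=Q^\pi$ to a statement about the state-space marginals $q^\sigma$ and $q^\pi$, and to show that both of these marginals are the unique finite solution of one and the same balance equation, namely the stationary balance equation \eqref{eqef5} associated with the policy $\sigma$. Once $q^\sigma=q^\pi$ is known, the full occupancy measures coincide at once: the policy $\sigma$ in the statement is by construction the disintegration \eqref{edefsig} of $Q^\pi$, so that $Q^\pi(dx\,da)=\sigma(da|x)\,q^\pi(dx)=\sigma(da|x)\,q^\sigma(dx)=Q^\sigma(dx\,da)$, and then $v^\sigma=v^\pi$ follows from \eqref{eqingjf}.

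First I would establish a balance equation valid for an \emph{arbitrary} policy $\pi$, not only for stationary ones. Writing $P_n^\pi(dx\,da):=P^\pi\{x_n\in dx,\,a_n\in da\}$ for the time-$n$ state-action distribution, the one-step dynamics give $q_{n+1}^\pi(Y)=\int_\X\int_\A p(Y|x,a)\,P_n^\pi(dx\,da)$ for every $Y\in{\cal X}$. Summing over $n=0,1,\dots$, using $q_0^\pi=\mu$, the identity \eqref{eqef1}, and $\sum_{n=0}^\infty P_n^\pi(dx\,da)=Q^\pi(dx\,da)$, I obtain
\[
q^\pi(Y)=\mu(Y)+\int_\X\int_\A p(Y|x,a)\,Q^\pi(dx\,da),\qquad Y\in{\cal X}.
\]
Substituting the disintegration $Q^\pi(dx\,da)=\sigma(da|x)\,q^\pi(dx)$ from \eqref{edefsig} turns this into exactly the stationary balance equation \eqref{eqef5} for $\sigma$, with $q^\pi$ in the role of the unknown; that is, $q^\pi$ solves \eqref{eqef5}. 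Since $q^\sigma$ solves the same equation, it remains to show that \eqref{eqef5} has at most one finite nonnegative solution.

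For uniqueness I would work with the substochastic one-step operator $(Tq)(Y):=\int_\X\int_\A p(Y|x,a)\,\sigma(da|x)\,q(dx)$ on finite measures on $\X$. By \eqref{eqef1} and \eqref{eqef4} one has $q^\sigma=\sum_{k=0}^\infty T^k\mu$, which is the minimal nonnegative solution of $q=\mu+Tq$; hence any finite solution $q$ satisfies $q\ge q^\sigma$, and $h:=q^\pi-q^\sigma$ is a nonnegative finite measure with $h=T^nh$ for all $n$. Evaluating at $\X$ gives $h(\X)=\int_\X P_x^\sigma\{T^{\bar x}>n\}\,h(dx)$ for every $n$; since this integrand equals $1$ at $n=0$ for $x\in\X$ and decreases to $P_x^\sigma\{T^{\bar x}=\infty\}$, the measure $h$ must be concentrated on the set $S:=\{x\in\X:\,P_x^\sigma\{T^{\bar x}=\infty\}=1\}$ of states from which $\sigma$ never absorbs.

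The main obstacle is therefore to prove $q^\pi(S)=0$, and this is precisely where the absorbing hypothesis enters. I would argue that $S$ is closed under the $\sigma$-dynamics, so that the disintegration forces $p(S|x,a)=1$ for $Q^\pi$-almost every $(x,a)$ with $x\in S$; the bound $E^\sigma T^{\bar x}\le L<\infty$ forces $P_\mu^\sigma\{T^{\bar x}=\infty\}=0$ and hence $\mu(S)=0$. The balance equation restricted to $Y=S$ then yields no inflow, $\int_{\X\setminus S}\int_\A p(S|x,a)\,Q^\pi(dx\,da)=0$, and, using $P_n^\pi\le Q^\pi$, its time-$n$ analogue gives $q_{n+1}^\pi(S)=q_n^\pi(S)$, so that $q_n^\pi(S)=0$ for all $n$ by induction and $q^\pi(S)=\sum_n q_n^\pi(S)=0$. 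Consequently $h(S)=0$, whence $h=0$ and $q^\pi=q^\sigma$, completing the argument. I expect the delicate point to be exactly this measure-theoretic bookkeeping that converts ``$\sigma$ never absorbs from $S$'' into ``$\pi$ places no occupancy on $S$,'' where the finiteness of $E^\pi T^{\bar x}$ and the definition of $\sigma$ as the conditional action distribution of $Q^\pi$ must be used together.
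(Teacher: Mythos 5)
Your proposal is correct, but there is nothing in the paper to compare it against in the usual sense: the paper does not prove Theorem~\ref{th3.4RS} at all. The statement is imported from Feinberg and Rothblum~\cite[Lemma 4.2]{FR} (see also the citation of \cite[Lemmas 4.1, 4.2]{FR} just before the theorem), so what you have written is a self-contained substitute for an external citation, and its chain of deductions checks out. The balance equation $q^\pi(Y)=\mu(Y)+\int_\X\int_\A p(Y|x,a)\,Q^\pi(dx\,da)$ for an arbitrary policy follows by summing the one-step identity over $n$, all terms being finite because $q^\pi(\X)=E^\pi T^{\bar x}\le L$; the disintegration \eqref{edefsig} turns it into \eqref{eqef5} with kernel $\sigma$; the representation $q^\sigma=\sum_{k\ge0}T^k\mu$ from \eqref{eqef1} and \eqref{eqef4} gives minimality, so $h:=q^\pi-q^\sigma$ is a nonnegative finite measure with $h=T^nh$, and letting $n\to\infty$ concentrates $h$ on $S:=\{x\in\X:\,P_x^\sigma\{T^{\bar x}=\infty\}=1\}.$ Your two key observations are exactly the right ones: (a) $\int_\A p(S|x,a)\,\sigma(da|x)=1$ for $x\in S$ (Markov property under the stationary policy $\sigma$), which the disintegration upgrades to $p(S|x,a)=1$ for $Q^\pi$-a.e.\ $(x,a)\in S\times\A$ --- this is the step that genuinely uses that $\sigma$ is the conditional action distribution of $Q^\pi$; and (b) $\mu(S)=0$ together with the vanishing of the inflow $\int_{\X\setminus S}\int_\A p(S|x,a)\,Q^\pi(dx\,da)$, where the cancellation of $q^\pi(S)$ on both sides of the restricted balance equation is legitimate precisely because $q^\pi(S)<\infty$, i.e., because the MDP is absorbing. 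The induction then gives $q_n^\pi(S)=0$ for all $n$, hence $q^\pi(S)=0$, $h=0$, $Q^\pi=Q^\sigma$, and $v^\pi=v^\sigma$ by \eqref{eqingjf}.

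One imprecision should be fixed before writing this up: the sentence ``it remains to show that \eqref{eqef5} has at most one finite nonnegative solution'' claims more than is true, and more than you actually prove. For absorbing (unlike discounted) MDPs this uniqueness fails in general: if $\sigma$ admits an invariant set that $\mu$ does not charge, any finite mass placed on it yields another solution. For instance, take $\X=\{1,2\},$ $\mu(1)=1,$ and a single action that sends state $1$ to $\bar x$ and keeps state $2$ at $2$; then every $q$ with $q(\{1\})=1$ and $q(\{2\})=c\ge 0$ solves \eqref{eqef5}. Fortunately your argument never invokes general uniqueness: it proves that the \emph{particular} solution $q^\pi$ coincides with the minimal solution $q^\sigma$ by exploiting structure ($q^\pi=\sum_n q_n^\pi$ with $q_0^\pi=\mu$ and $\mu(S)=0$) that an abstract solution need not have. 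Rephrase that sentence as ``it remains to show $q^\pi=q^\sigma$'' and the proof stands.
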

\section{Sufficient Conditions for  Compactness of Performance Sets}\label{s5}
We start this section with formulating sufficient conditions for compactness of the set of strategic measures ${\cal S}:= \{P^\pi:\pi\in\Pi\}$ defined on the set of all trajectories $\H_\infty$ for the given initial distribution $\mu.$  Since $\H_\infty$  is a countable product of standard Borel spaces, it is a standard Borel space.  
Let ${\cal P}(\H_\infty)$ be the set of all probability measures on $\H_\infty.$ If $\A$ is a Borel subset of a Polish space, let us consider the $ws^\infty$-topology on ${\cal P}(\H_\infty),$ which is the coarsest topology in which all the mappings $P\mapsto\int f(x_0,a_0,x_1\ldots,x_t)P(dx_0da_0dx_1\ldots dx_t)$ are continuous for all bounded Borel functions $f:\H_t\mapsto \R,$ which are continuous in $ (a_0,a_1,\ldots,a_t),$ $t=0,1,\ldots\ .$  Let us consider the following version of a condition introduced by  Sch\"al~\cite{b8}. 

\medskip

{\bf Condition}~({\bf S}).
\begin{itemize}
\item[{\rm (S1)}] The set
$\A$ is a Borel subset of a Polish space, and the sets $A(x)$ are compact for all $x\in \X,$
\item[{\rm (S2)}]
 The transition probability $p(\cdot|x,a)$ is setwise continuous in $a\in A(x);$ that is, for each bounded Borel function $f:\X\mapsto \R$ and for each $x\in \X$, the function $a\mapsto\int_\X f(y) p(dy|x,a)$ is continuous  on $A(x),$
 \item[{\rm (S3)}] For each $x\in \X$ and $i=1,\ldots,N,$ the reward function $r^{(i)}(x,a)$ is continuous in $a\in A(x).$
\end{itemize}

\begin{theorem}\label{tschal} {\rm (Balder~\cite{Ba}, Nowak~\cite{No}, Sch\"al~\cite{b8})}.
If assumptions  (S1) and (S2) hold, then  the set of strategic measures ${\cal S}=\{P^\pi:\, \pi\in\Pi\}$   is a compact subset of ${\cal P}(\H_\infty)$ endowed with the $ws^\infty$-topology.
\end{theorem}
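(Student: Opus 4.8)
The plan is to prove compactness by verifying the two standard ingredients separately: relative compactness of ${\cal S}$ in $({\cal P}(\H_\infty),ws^\infty)$ and closedness of ${\cal S}$ in this topology. I would begin by recording a working characterization of strategic measures: a probability measure $P$ on $\H_\infty$ belongs to ${\cal S}$ if and only if (i) its $x_0$-marginal equals $\mu$; (ii) $P\{a_t\in A(x_t)\}=1$ for every $t$; and (iii) for every $t$ the one-step Markov property $P\{x_{t+1}\in\cdot\mid x_0,a_0,\ldots,x_t,a_t\}=p(\cdot\mid x_t,a_t)$ holds $P$-a.s. The forward implication is the Ionescu--Tulcea construction, and the converse is obtained by disintegrating $P$ into conditional action kernels $\pi_t(\cdot\mid h_t)$, which assemble into a policy generating $P$.

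I expect relative compactness to be the main obstacle, since $\X$ is only standard Borel (not compact) and the $ws^\infty$-topology is setwise in the state coordinates, so Prokhorov's theorem does not apply directly to the full measures. The route I would take exploits (S1): because every $A(x)$ is compact, the conditional action distributions are Young measures (relaxed controls) valued in the compact fibers $A(x)$, and a Young-measure compactness argument yields relative compactness in the stable (weak-in-actions) topology along the action coordinates, while the state coordinates are pinned down inductively through $\mu$ and $p$. Concretely I would work with the finite-horizon marginals $P_t$ on $\H_t$, showing that the family $\{P_t^\pi\}$ is relatively compact (tightness in the actions from compactness of $A(x)$, with the state marginals $q_n^\pi$ arising as images of earlier marginals under $p$), and then pass to the infinite horizon by a diagonal/projective-limit argument, using consistency of the marginals to assemble a limiting probability measure on $\H_\infty$.

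For closedness, suppose $P^{(n)}\to P$ in $ws^\infty$ with $P^{(n)}\in{\cal S}$; I must check (i)--(iii) for $P$. Condition (i) is immediate, the $x_0$-marginal functional being $ws^\infty$-continuous and constantly equal to $\mu$. For (iii) I rewrite the Markov property in weak form: for every bounded Borel $g$ on $\X$ and every bounded Borel $f$ on $\H_t$ continuous in the action variables,
\[
\int\Big[g(x_{t+1})-\int_\X g(y)\,p(dy\mid x_t,a_t)\Big]f(h_t,a_t)\,P(d\omega)=0.
\]
Both terms define $ws^\infty$-continuous functionals of $P$: the first because $g(x_{t+1})f$ is bounded Borel and trivially continuous in the actions, and the second precisely because (S2) makes $a\mapsto\int_\X g(y)\,p(dy\mid x,a)$ continuous, so its integrand is again bounded Borel and continuous in the actions. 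Hence the equality passes to the limit. The remaining feasibility condition (ii) is the delicate point: since each $A(x)$ is compact, hence closed, the indicator $\mathbf{1}\{a\in A(x)\}$ is upper semicontinuous in $a$, so a Portmanteau-type inequality for the weak-in-actions convergence gives $P\{a_t\in A(x_t)\}\ge\limsup_n P^{(n)}\{a_t\in A(x_t)\}=1$, forcing equality. Combining relative compactness with closedness shows that ${\cal S}$ is a compact subset of $({\cal P}(\H_\infty),ws^\infty)$.
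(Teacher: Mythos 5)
First, a point of calibration: the paper gives no proof of Theorem~\ref{tschal} at all --- it is imported from Sch\"al~\cite{b8}, Nowak~\cite{No} and Balder~\cite{Ba} --- so your attempt can only be measured against those works, whose overall architecture you have in fact reproduced: characterize ${\cal S}$ by its marginal, feasibility and Markov properties, prove closedness in the $ws^\infty$-topology, and prove relative compactness by a relaxed-control/Young-measure argument over the compact fibers $A(x)$. The closedness half of your argument is essentially correct and is the easy half. Your weak reformulation of the Markov property uses exactly the test functions defining the $ws^\infty$-topology, and (S2) enters precisely where it must, to make $(h_t,a_t)\mapsto f(h_t,a_t)\int_\X g(y)p(dy|x_t,a_t)$ an admissible (bounded, Borel, action-continuous) integrand; the feasibility constraint survives the limit by your upper-semicontinuity observation, which can be made rigorous by approximating $I\{a_t\in A(x_t)\}$ from above by $\left(1-k\,d(a_t,A(x_t))\right)^+$, a function that is continuous in the actions and jointly Borel (the latter via a Castaing representation of $x\mapsto A(x)$, available because ${\rm Gr}_\X(A)$ is Borel with compact sections). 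Two routine repairs: the disintegrated kernels in your characterization satisfy $\pi_t(A(x_t)|h_t)=1$ only $P$-a.s.\ and must be corrected on the exceptional set using a fixed selector, and the passage from your weak identity to the conditional-law statement needs a functional monotone class argument.

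The genuine gap is in the relative-compactness half, and it sits exactly where your sketch is vaguest. Young-measure (stable-topology) compactness theorems are statements about kernels over a \emph{fixed} underlying measure: they give compactness of $\{\nu\otimes\pi\}$ when $\nu$ is fixed and the fibers $A(x)$ are compact. This applies at stage $t=0$, because every strategic measure has $x_0$-marginal $\mu$. But at every stage $t\ge 1$ the history marginal $P_t^{\pi}$ on $\H_t$ varies with the policy, so your horizon induction needs a compactness theorem for $\{\nu_n\otimes\pi_n\}$ in which the underlying measures $\nu_n$ themselves only converge in the $ws$-sense; equivalently, you must show that the set of probability measures on $\H_t\times\A$ whose $\H_t$-marginal lies in a given $ws$-compact set and which are carried by $\{a_t\in A(x_t)\}$ is itself $ws$-compact. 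That statement is true, but it is not a consequence of ``tightness in the actions'' --- tightness addresses weak convergence, while the $ws$-topology is setwise in the state coordinates --- and it is precisely the substantive theorem that the three cited papers prove, by quite different means (Sch\"al by an inductive argument on conditional distributions, Nowak via properties of the $ws$-topology, Balder via $K$-convergence of Young measures). Your proposal invokes the conclusion rather than proving it, so the induction on the horizon, and with it the compactness claim, does not close. A secondary, fixable omission of the same flavor: the continuity of $\nu\mapsto\nu\otimes p$ under (S2), which your induction also needs, requires combining setwise convergence of $p(\cdot|x,a_n)$ with pointwise convergence of varying integrands, e.g.\ through a Vitali--Hahn--Saks/uniform absolute continuity argument.
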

\begin{corollary} \label{cafin} Consider a uniformly absorbing MDP. If Condition~(S) holds, then the performance
set ${\cal V}$ is compact. 
\end{corollary}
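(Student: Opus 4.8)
The plan is to realize the performance set ${\cal V}$ as the continuous image of the compact set of strategic measures ${\cal S}$, and then use the fact that a continuous image of a compact set is compact. By Theorem~\ref{tschal}, assumptions (S1) and (S2) guarantee that ${\cal S}=\{P^\pi:\pi\in\Pi\}$ is a compact subset of ${\cal P}(\H_\infty)$ in the $ws^\infty$-topology. Since the performance vector depends on the policy $\pi$ only through the strategic measure $P^\pi$ — indeed $v^\pi=\int_{\H_\infty}\sum_{t=0}^\infty r(x_t,a_t)\,P^\pi(dh)$ by \eqref{defvpigen} — the assignment $\Phi\colon P^\pi\mapsto v^\pi$ is a well-defined map ${\cal S}\to\R^N$ with $\Phi({\cal S})={\cal V}$. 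Thus it suffices to prove that $\Phi$ is continuous in the $ws^\infty$-topology.

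First I would approximate $\Phi$ by its finite-horizon truncations. For $n=1,2,\dots$ set $f_n(h):=\sum_{t=0}^{n-1}r(x_t,a_t)$, a function on $\H_\infty$ that depends only on the coordinates $(x_0,a_0,\dots,x_{n-1},a_{n-1})$ and hence factors through $\H_n$. Because $r$ is bounded and, by (S3) (which is part of Condition~(S)), continuous in the action variables, each coordinate of $f_n$ is a bounded Borel function on $\H_n$ that is continuous in $(a_0,\dots,a_{n-1})$. By the very definition of the $ws^\infty$-topology, the map $\Phi_n\colon P\mapsto\int f_n\,dP$ is therefore continuous on ${\cal P}(\H_\infty)$, and in particular on ${\cal S}$.

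The key step is to show that $\Phi_n\to\Phi$ \emph{uniformly} on ${\cal S}$, and this is precisely where the uniform absorbing hypothesis enters. Since $r(\bar x,\bar a)=0$, each summand satisfies $r(x_t,a_t)=r(x_t,a_t)I\{t<T^{\bar x}\}$, so, writing $\|\cdot\|$ for a norm on $\R^N$ and letting $M:=\sup_{(x,a)}\|r(x,a)\|$, we obtain for every policy $\pi$
\[
\bigl\|v^\pi-\Phi_n(P^\pi)\bigr\|
=\Bigl\|E^\pi\sum_{t=n}^\infty r(x_t,a_t)\Bigr\|
\le M\,E^\pi\sum_{t=n}^\infty I\{t<T^{\bar x}\}.
\]
By \eqref{elimsup}, together with the remark after Definition~\ref{defabsu} that the supremum over $\dM$ equals the supremum over all of $\Pi$, the right-hand side tends to $0$ as $n\to\infty$ uniformly in $\pi\in\Pi$, i.e.\ uniformly over ${\cal S}$. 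A uniform limit of continuous functions is continuous, so $\Phi$ is continuous on ${\cal S}$, and therefore ${\cal V}=\Phi({\cal S})$ is a compact subset of $\R^N$.

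The main obstacle, and the only place requiring more than (S1)--(S2), is this uniform convergence of the truncations: without uniform absorbingness one obtains merely pointwise convergence $\Phi_n(P^\pi)\to v^\pi$ for each fixed $\pi$, which does not suffice to transfer continuity from the $\Phi_n$ to the limit $\Phi$. Everything else is routine once one verifies that each truncated objective $f_n$ belongs to the class of test functions defining the $ws^\infty$-topology.
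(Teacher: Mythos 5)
Your overall strategy is exactly the paper's: realize ${\cal V}$ as the image of the compact set ${\cal S}$ (Theorem~\ref{tschal}) under $P^\pi\mapsto v^\pi$, approximate this map by finite-horizon truncations, use uniform absorbingness to get uniform convergence of the truncations, and conclude continuity of the limit and hence compactness of the image. The uniform-convergence estimate you give is the same as the paper's and is fine. The gap is in the claim that each truncation $\Phi_n$ is continuous ``by the very definition of the $ws^\infty$-topology.'' The test functions defining that topology must be continuous in $(a_0,\ldots,a_t)$ as functions of the action variables ranging over all of $\A$. Condition (S3), however, only asserts that $r^{(i)}(x,\cdot)$ is continuous on $A(x)$; off the graph ${\rm Gr}_\X(A)$ the reward function is merely bounded and Borel and may be badly discontinuous in $a$. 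Consequently $f_n(h)=\sum_{t=0}^{n-1}r(x_t,a_t)$ need not belong to the class of test functions, and the continuity of $\Phi_n$ on ${\cal S}$ does not follow from the definition of the topology as you assert.

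The paper closes precisely this hole by invoking Yushkevich's extension theorem \cite[Theorem 2]{Yu}: each $r^{(i)}$ can be extended from ${\rm Gr}_\X(A)$ to $\X\times\A$ (equivalently, redefined off the graph) as a bounded measurable function that is continuous in $a\in\A$ for every $x\in\X$. Since every strategic measure is concentrated on trajectories with $a_t\in A(x_t)$, this modification changes neither $v^\pi$ nor $\Phi_n(P^\pi)$, and after it your $f_n$ are legitimate test functions, so each $\Phi_n$ is indeed $ws^\infty$-continuous. Note that this extension step is a genuine theorem about Carath\'eodory-type functions, not a routine verification; with it inserted, the rest of your argument — uniform convergence via \eqref{elimsup} and the equality of suprema over $\dM$ and $\Pi$, continuity of the uniform limit, and compactness of ${\cal V}=\Phi({\cal S})$ — coincides with the paper's proof.
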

\begin{proof}  Let the $ws^\infty$-topology be fixed on ${\cal P}(\H_\infty).$  Since ${\cal V}=V({\cal S}),$ where $V:{\cal S}\mapsto \R^N$ with $V(P^\pi):=v^\pi$ for all $\pi\in\Pi,$  the corollary follows from the continuity of $V,$ which is established in the rest of this proof.

Let us set $r^{(i)}({\bar x},{\bar a})=0$ for all $i=1,\ldots,N.$  This change affects neither the values of $v^\pi$ nor the validity of (S3).
Let $v^{(i),\pi}$ be the $i$th coordinate of the performance vector $v^\pi,$ $i=1,2,\ldots,N,$
\[v^{(i),\pi}=E^\pi\sum_{t=0}^{T^{\bar x}-1} r^{(i)}(x_t,a_t) = E^\pi\sum_{t=0}^\infty r^{(i)}(x_t,a_t),\]
where the second equality holds because the state $\bar x$ is absorbing and $r^{(i)}({\bar x},{\bar a})=0.$ Define
\[v_n^{(i),\pi}:=E^\pi\sum_{t=0}^{n-1} r^{(i)}(x_t,a_t),\qquad n=1,2,\ldots\ .
\]
Since the MDP is uniformly absorbing, $v_n^{(i),\pi}\to v^{(i),\pi}$ uniformly in $\pi$ as $n\to\infty.$

According to Yushkevich~\cite[Theorem 2]{Yu}, each function $r^{(i)},$ $i=1,\ldots,N,$ can be extended from ${\rm Gr}_\X(A)$ to $\X\times\A$ in a way that the extension is a bounded measurable function which is continuous in $a\in \A.$ By the definition of the $ws^\infty$-topology, the functions $V_n^{(i)}(P^\pi):=v_n^{(i),\pi}$ are continuous on ${\cal S}.$   Let $V^{(i)}(P^\pi)$ denote the $i$th coordinate of the vector $V(P^\pi).$ Since $V_n^{(i)}(P)\to V^{(i)}(P)$ uniformly for all $P\in {\cal S}$ and for all $i=1,\ldots,N,$ the mapping $V$ is continuous.  
\end{proof}
\begin{corollary}\label{cfinAabs}
Consider a uniformly absorbing MDP. If each set $A(x),$  is finite, $x\in \X,$ then the performance set ${\cal V}$ is compact. 
\end{corollary}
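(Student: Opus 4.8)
The plan is to reduce this to the already-proved Corollary~\ref{cafin} by verifying that finite action sets force Condition~(S), once the action space is given a suitable topology. The only point that needs care is that the model assumes merely that $(\A,{\cal A})$ is a standard Borel space, with no preferred topology, whereas (S1) demands that $\A$ be a Borel subset of a Polish space; so the first step is to install such a topology and check that it is harmless.

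Concretely, since $\A$ is standard Borel it is isomorphic to a Borel subset of a Polish space, and I would identify $\A$ with such a subset and equip it with the inherited metric $d$. This does not change the Borel $\sigma$-field ${\cal A}$, hence $p$, $r$, the sets $A(x)$, and ${\rm Gr}_\X(A)$ remain measurable exactly as assumed in the model. With this topology in place, (S1) holds: $\A$ is a Borel subset of a Polish space by construction, and each finite $A(x)$ is a compact (indeed closed) subset of a metric space.

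The key observation, which makes (S2) and (S3) automatic, is that a finite subset of a metric space carries the discrete subspace topology: for distinct points $a_1,\dots,a_k\in A(x)$, the ball of radius $\tfrac12\min_{j\ne i}d(a_i,a_j)$ meets $A(x)$ only at $a_i$, so each singleton $\{a_i\}$ is relatively open. Consequently every real-valued function on $A(x)$ is continuous there. In particular, for each bounded Borel $f:\X\to\R$ and each $x$, the map $a\mapsto\int_\X f(y)\,p(dy|x,a)$ is continuous on $A(x)$, which is (S2); and each $r^{(i)}(x,\cdot)$ is continuous on $A(x)$, which is (S3). Thus Condition~(S) holds, and Corollary~\ref{cafin} yields the compactness of ${\cal V}$.

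I do not expect a genuine obstacle. The only subtle point is (S1)'s topological requirement on $\A$, since the ambient model carries no preferred topology; this is resolved by the fact that every standard Borel space admits a compatible Polish topology. The finiteness of each $A(x)$ then makes the particular choice of topology entirely immaterial to the continuity demands of (S2) and (S3), so the whole content of the corollary is the implication ``finite action sets $\Rightarrow$ Condition~(S),'' after which Corollary~\ref{cafin} does the rest.
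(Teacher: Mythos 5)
Your proposal is correct and takes essentially the same approach as the paper: the paper also reduces to Corollary~\ref{cafin} by passing to an isomorphic copy $\tilde\A$ of $\A$ that is a Borel subset of a Polish space (constructing the transported MDP with ${\tilde A}(x)=g^{-1}(A(x))$, ${\tilde r}(x,a)=r(x,g(a))$, ${\tilde p}(\cdot|x,a)=p(\cdot|x,g(a))$ and noting the performance sets coincide), under which Condition~(S) holds automatically for finite action sets. Your explicit check of (S2) and (S3) via the discrete topology on finite subsets of a metric space is a detail the paper leaves implicit.
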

\begin{proof}
If $\A$ is a Borel subset of a Polish space, then the conclusion of the corollary follows from Corollary~\ref{cafin} since Condition  (S) holds.  The corollary follows from this fact since a standard Borel space is isomorphic to a Borel subset of a Polish space.  Indeed, let
 $\tilde\A$ be a Borel subset of a Polish space isomorphic to $\A,$ and let $g:\tilde\A\mapsto\A$ be the corresponding isomorphism.  Let us consider the MDP with the state space $\X,$ the action space $\A$  replaced with the isomorphic set $\tilde\A,$ the sets of available actions ${\tilde A}(x):=g^{-1}(A(x)),$  one-step rewards vectors ${\tilde r}(x,a):= r(x,g(a)),$ and transition probabilities  ${\tilde p}(\cdot|x,a)=p(\cdot|x,g(a)),$ where $x\in\X$ and $a\in {\tilde \A}.$  The performance sets $\cal V$ for the new and original models coincide.
The set $\cal V$ is compact since $\tilde\A$ is a Borel subset of a Polish space.
\end{proof}

\section{Submodels and Dimensionality Reduction}\label{s6}
\begin{definition}{\rm
An MDP $\{{\tilde \X},{\tilde \A},{\tilde A}(\cdot),{\tilde p}, {\tilde r}\}$ is called a {\it submodel} of the MDP $\{\X,\A, A(\cdot), p, r\},$ if ${\tilde \X}=\X,$ ${\tilde \A}=\A,$ ${\tilde p}=p,$ ${\tilde r}=r,$ and ${\tilde A}(x)\subset A(x)$ for all $x\in \X.$ }
\end{definition}
We say that a submodel is well-defined, if the set ${\rm Gr}_{\tilde \X}({\tilde A})$ is a Borel subset of $\tilde\X\times \tilde\A$ and there exists at least one deterministic policy (selector) in the submodel.  The existence of a selector usually follows from measurable selection theorems. According to the Arsenin-Kunugui selection theorem (Kechris~\cite[Th. 18.18]{Ke}), a measurable selector $\phi:\tilde{\X}\mapsto \tilde{\A},$ such that $\phi(s)\in{\tilde A}(x)$ for all $x\in {\tilde\X},$ exists, if $\tilde\A$ is a Borel subset of a Polish space, the set ${\rm Gr}_{\tilde \X}({\tilde A})$ is a Borel subset of $\tilde\X\times \tilde\A,$ and each set $\tilde A(x)$ is a union of  a countable number of nonempty compact subsets of $\tilde\A.$  In addition, this theorem claims that under these assumptions  the projection of any Borel subset of   ${\rm Gr}_{\tilde\X}(\tilde{A})$ onto $\tilde\X$ is a Borel subset of $\tilde\X.$  If ${\rm Gr}_{\tilde\X}({\tilde A})$ is a Borel subset of $\tilde\X\times\tilde\A$ and each set ${\tilde A}(x),$ $x\in \tilde\X,$ is nonempty and finite or countable, then the  Arsenin-Kunugui   theorem implies that the submodel is well-defined and the projection of any Borel subset of ${\rm Gr}_{\tilde\X}({\tilde A})$ onto $\tilde\X$ is a Borel subset of $\tilde\X.$

It is obvious that a submodel inherits many properties of the MDP including atomless, absorbing, and uniformly absorbing properties.  In addition, ${\tilde {\cal V}}\subset {\cal V},$ where ${\tilde {\cal V}}$ is the performance set for the submodel.
\begin{lemma}\label{lredcountA}
Consider an absorbing atomless MDP.  Then for every  $v\in{\cal V}$ there exists a submodel with finite or countable action sets ${\tilde A}(x),$ $x\in \X,$ such that, for some stationary policy $\pi$ for this submodel,  $v^\pi=v$ and $\pi(a|x)>0$ for all $x\in \X$ and all $a\in {\tilde A}(x).$
\end{lemma}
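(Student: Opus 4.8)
The plan is to reduce the statement to a finite-dimensional convex-representation fact about deterministic performances, and then to realize the representing combination as a single stationary policy supported on finitely many actions per state. First I would use Lemma~\ref{lem3.1} (i.e. ${\cal V}^\S={\cal V}$) to replace the given $v\in{\cal V}$ by a stationary policy $\sigma\in\S$ with $v^\sigma=v$. The target I would then aim for is: there exist finitely many deterministic policies $\phi_0,\dots,\phi_m\in\dF$ with $m\le N$ and weights $\lambda_j\ge 0$, $\sum_j\lambda_j=1$, such that $v=\sum_{j}\lambda_j v^{\phi_j}$; that is, $v\in\mathrm{conv}({\cal V}^\dF)$.

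Granting this, the submodel is produced as follows. Since the set of occupancy measures is convex and the same for all stationary and all policies by~\eqref{emmummu}, the measure $Q:=\sum_j\lambda_j Q^{\phi_j}$ again lies in ${\cal M}$, so by Theorem~\ref{th3.4RS} it is the occupancy measure $Q^\pi$ of the stationary policy $\pi$ obtained by disintegrating $Q$, and $v^\pi=\int_\X\int_\A r\,dQ=\sum_j\lambda_j v^{\phi_j}=v$ by linearity of~\eqref{eqingjf}. Because each $Q^{\phi_j}$ is carried by the graph of $\phi_j$, the measure $\pi(\cdot|x)$ is supported on the finite set $\{\phi_0(x),\dots,\phi_m(x)\}$. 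I would then set $\tilde A(x):=\{a:\pi(a|x)>0\}$ at the states where this is nonempty and $\tilde A(x):=\{\phi_0(x)\}$ elsewhere; this forces $\pi(a|x)>0$ for every $a\in\tilde A(x)$ and \emph{every} $x\in\X$, while leaving $v^\pi=v$ unaffected since the modification touches only states carrying no occupancy mass. The Arsenin--Kunugui theorem (Kechris~\cite[Th.~18.18]{Ke}) guarantees that ${\rm Gr}_\X(\tilde A)$ is Borel and that the finite-action submodel is well-defined. Notice that this even yields a finite, not merely countable, action set; atomlessness is not needed for this step.

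The real work, and the step I expect to be the main obstacle, is the exact representation $v\in\mathrm{conv}({\cal V}^\dF)$. A separation argument gives only the closed hull: if $v\notin\overline{\mathrm{conv}({\cal V}^\dF)}$ there is a direction $\ell\in\R^N$ with $\ell\cdot v>\sup_{\phi\in\dF}\ell\cdot v^\phi$, whereas $\ell\cdot v=v^\sigma_{\ell\cdot r}\le\sup_{\pi\in\Pi}v^\pi_{\ell\cdot r}=\sup_{\phi\in\dF}v^\phi_{\ell\cdot r}$, the last equality being the classical sufficiency of deterministic policies for the single scalar reward $\ell\cdot r$ of a total-reward absorbing MDP; hence $v\in\overline{\mathrm{conv}({\cal V}^\dF)}$. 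To upgrade membership in the closure to a genuine finite representation I would split into two cases. If $v$ lies in the relative interior of $\overline{\mathrm{conv}({\cal V}^\dF)}$, the convex-analysis identity $\mathrm{relint}\,\overline{\mathrm{conv}(S)}=\mathrm{relint}\,\mathrm{conv}(S)\subseteq\mathrm{conv}(S)$ with $S={\cal V}^\dF$ places $v$ in $\mathrm{conv}({\cal V}^\dF)$ directly. If $v$ is a relative boundary point, I would choose a supporting direction $\ell$; then $\ell\cdot v=\sup_{\pi}v^\pi_{\ell\cdot r}$ is \emph{attained} by $\sigma$, so $\sigma$ solves the optimality equation for the reward $\ell\cdot r$ and $\mathrm{supp}\,\sigma(\cdot|x)$ lies, for a.e.\ $x$, in the Borel-graph set $A^\ell(x)$ of conserving actions. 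Restricting to the submodel with action sets $A^\ell(\cdot)$ (whose deterministic policies are all $\ell\cdot r$-optimal, so their performances lie in the hyperplane $\{w:\ell\cdot w=\ell\cdot v\}$) keeps $\sigma$ as an achieving stationary policy while dropping the affine dimension by one. This is exactly the dimensionality-reduction induction of Section~\ref{s6}: the induction terminates either in the relative-interior case above or, when a face collapses to a point, at a submodel in which every deterministic selector (supplied by Arsenin--Kunugui) already attains $v$.
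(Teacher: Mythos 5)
Your second half is sound: given finitely many deterministic policies $\phi_0,\dots,\phi_m$ and weights with $v=\sum_j\lambda_j v^{\phi_j}$, convexity of ${\cal M}$ and Theorem~\ref{th3.4RS} do produce a stationary policy supported on $\{\phi_0(x),\dots,\phi_m(x)\}$, and trimming to the support (modifying the policy on the $q^\pi$-null set of bad states, which leaves the occupancy measure unchanged) yields the submodel. The gap is the crux claim itself, $v\in\mathrm{conv}({\cal V}^\dF)$, which you correctly flag as the main obstacle but never actually establish, and the route you sketch for it cannot work here, for three reasons. First, it is circular relative to the paper: the ``dimensionality-reduction induction of Section~\ref{s6}'' that you invoke, namely Theorem~\ref{toptimsingle} and Theorem~\ref{tdimred}, is itself built on Lemma~\ref{lredcountA} --- both results are formulated in terms of the submodel whose existence this very lemma provides. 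Second, and independently of how one organizes the logic, that machinery requires the MDP to be \emph{uniformly} absorbing, whereas the lemma assumes only absorbing: the implication you need, that every deterministic policy of the conserving submodel is $\ell\cdot r$-optimal, is exactly the equalizing step at the end of the proof of Theorem~\ref{toptimsingle}, where uniform absorption is used to kill the tail terms $E^{\sigma^{n,\pi}}\sum_{t\ge n}I\{t<T^{\bar x}\}$; for merely absorbing MDPs (cf.\ Example~\ref{ex3.12}) conserving does not imply optimal. Third, for uncountable action sets there is no reason for the conserving correspondence $A^\ell(\cdot)$ to have a Borel graph with $\sigma$-compact (or countable) values, so the Arsenin--Kunugui theorem cannot be applied to it; removing precisely this measurable-selection difficulty, by first passing to countable action sets, is the purpose of Lemma~\ref{lredcountA} in the paper. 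In effect your intermediate target is at least as strong as the statements the paper derives \emph{from} this lemma, so the logical order is inverted.

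The paper's own proof is much more direct and shows where atomlessness --- which your argument never uses --- must enter. By Feinberg and Piunovskiy~\cite[Theorem 2.1]{b5}, atomlessness yields a nonrandomized \emph{Markov} policy $\phi=(\phi_0,\phi_1,\ldots)$ with $v^\phi=v$; the sets $A_\phi(x):=\cup_{n=0}^\infty\{\phi_n(x)\}$ are countable with Borel graph (a countable union of graphs of Borel selectors), and Theorem~\ref{th3.4RS} collapses $\phi$ to a stationary policy $\pi$ concentrated on $A_\phi(\cdot)$ with $v^\pi=v$; setting ${\tilde A}(x):=\{a\in A_\phi(x):\pi(a|x)>0\}$ finishes the proof. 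Note finally that your route, if it worked, would give \emph{finite} action sets and a representation by at most $N+1$ deterministic policies, strictly stronger than the countable sets the lemma claims and the paper's proof delivers; that over-claim, together with the absence of any use of atomlessness, is itself a signal that the unproven step is where the real content lies.
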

\begin{proof}
According to Feinberg and Piunovskiy ~\cite[Theorem 2.1]{b5}, there exists a nonrandomized Markov policy $\phi=(\phi_0,\phi_1,\ldots )$ such that
$v^\phi=v.$  Let us define the nonempty sets $A_\phi(x):=\cup_{n=0}^\infty\{\phi_n(x)\},$ which are either countable or finite. Observe that the set  ${\rm Gr}_\X(A_\phi)=\cup_{n=0}^\infty {\rm Gr}_\X(\phi_n)$ is Borel because the graph of a Borel function $\phi_n$ is a Borel set; see e.g., Bertsekas and Shreve~\cite[Cor. 7.14.1]{b3}.

In view of Theorem~\ref{th3.4RS}, there is a stationary policy $\pi$ such that $\pi(\cdot|x)$ is concentrated on $A_\phi(x)$ and $v^\pi=v^\phi=v.$ Let ${\tilde A(x)}=\{a\in A_\phi(x):\, \pi(a|x)>0\},$ $x\in \X.$
Since $\pi(A_\phi(x)|x)=1,$ then ${\tilde A}(x)\ne\emptyset$ for all $x\in \X.$   The set ${\rm Gr}_\X({\tilde A})$ is Borel because ${\rm Gr}_\X({\tilde A})=\{(x,a)\in \X\times\A:\, G(x,a)>0 \},$ where $G(x,a)=\sum_{n=0}^\infty \pi(\phi_n(x)|x)I\{(x,a)\in {\rm Gr}_\X(\phi_n)\},$ and because the functions  $I\{(x,a)\in {\rm Gr}_\X(\phi_n)\}$ and $\pi(\phi_n(x)|x)$ are Borel-measurable, where  the measurability of the function $I\{(x,a)\in {\rm Gr}_\X(\phi_n)\}$ follows from the measurability of the sets ${\rm Gr}_\X(\phi_n)\subset\X\times\A$ and the measurability of the function $\pi(\phi_n(x)|x)$ follows from  Bertsekas and Shreve~\cite[Cor. 7.26.1]{b3}.
\end{proof}


\begin{theorem}\label{toptimsingle} Consider a uniformly absorbing atomless MDP. Suppose that $N=1$ and there exists a stationary policy $\sigma^*$ such that $v^{\sigma^*}=\sup_{\sigma\in \S} v^\sigma.$  For $v:=v^{\sigma^*}\in {\cal V}$ consider a stationary policy $\pi$ and a  submodel with action sets ${\tilde A}(\cdot),$ whose existence is stated in Lemma~\ref{lredcountA}.  Then $v^{\pi^*}=v^{\sigma^*}$ for each policy $\pi^*$ in this submodel.
\end{theorem}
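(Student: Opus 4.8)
The plan is to exploit that the stationary policy $\pi$ supplied by Lemma~\ref{lredcountA} is in fact optimal over \emph{all} policies and charges every action of the submodel, and then to show that in such a situation every available action of the submodel is conserving, so that no submodel policy can perform better or worse than $\pi$. First I would record optimality: by construction $v^\pi=v=v^{\sigma^*}=\sup_{\sigma\in\S}v^\sigma$, and by Lemma~\ref{lem3.1} the sets ${\cal V}^\S$ and ${\cal V}$ coincide, so $v^\pi=\sup_{\pi'\in\Pi}v^{\pi'}$; in particular $v^{\rho}\le v^\pi$ for every policy $\rho$ of the submodel. Since the submodel inherits the absorbing property, Theorem~\ref{th3.4RS} lets me replace an arbitrary submodel policy $\pi^*$ by a stationary one $\rho$ (automatically supported on the sets $\tilde A(\cdot)$) with $v^\rho=v^{\pi^*}$, so it suffices to prove $v^\rho=v^\pi$ for every \emph{stationary} $\rho$ of the submodel.

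Next I would introduce $w(x):=v^\pi(x)$ and the advantage
\[
d(x,a):=r(x,a)+\int_\X w(y)\,p(dy|x,a)-w(x),
\]
so that the evaluation equation for the stationary policy $\pi$ reads $\int_\A d(x,a)\,\pi(da|x)=0$ for all $x$. The uniformly absorbing property, together with boundedness of $r$, is exactly what guarantees that $w$ is finite and integrable against the occupancy measures in play (the finite-horizon values converge uniformly), so the following manipulations are legitimate. Integrating the bounded function $w$ against the flow identity \eqref{eqef5} for a stationary submodel policy $\rho$ and combining with \eqref{eqingjf} yields the key formula
\[
v^{\rho}-v^\pi=\int_\X\int_\A d(x,a)\,\rho(da|x)\,q^{\rho}(dx).
\]
Taking $\rho=\pi$ recovers $0=0$, and optimality of $\pi$ forces the right-hand side to be $\le 0$ for every such $\rho$.

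The heart of the argument is to upgrade this to $d(x,a)=0$ for $q^\pi$-a.e.\ $x$ and all $a\in\tilde A(x)$. Set $B:=\{x:\ d(x,a)>0\ \text{for some}\ a\in\tilde A(x)\}$ and suppose $q^\pi(B)>0$. As $\tilde A(\cdot)$ is countable I can measurably select $\psi(x)\in\tilde A(x)$ with $d(x,\psi(x))>0$ on $B$, and define the stationary submodel policy $\rho$ equal to the Dirac measure $\delta_{\psi(x)}$ on $B$ and to $\pi$ off $B$. Then $\int_\A d(x,a)\,\rho(da|x)$ is strictly positive on $B$ and equals $0$ off $B$, so the key formula gives $v^\rho-v^\pi=\int_B d(x,\psi(x))\,q^{\rho}(dx)$, which is strictly positive provided $q^{\rho}(B)>0$. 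Verifying that perturbing $\pi$ only on $B$ keeps $B$ positively occupied is the main obstacle, and I would settle it by a first-entrance argument: let $n_0$ be the smallest $n$ with $q_{n_0}^\pi(B)>0$; since $q_n^\pi(B)=0$ for $n<n_0$, under $\pi$ the set $B$ is a.s.\ avoided before time $n_0$, and because $\rho=\pi$ off $B$ the two policies induce the same law up to the first visit of $B$, whence $q_{n_0}^{\rho}(B)\ge q_{n_0}^\pi(B)>0$ and so $q^{\rho}(B)>0$. This yields $v^\rho>v^\pi$, contradicting $v^\rho\le v^\pi$; hence $q^\pi(B)=0$, i.e.\ $d\le 0$ holds $q^\pi$-a.e. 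Combined with $\int_\A d\,\pi(da|x)=0$ and $\pi(a|x)>0$ for all $a\in\tilde A(x)$, this forces $d=0$ $q^\pi$-a.e.

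Finally, for any stationary submodel policy $\rho$ we have $\rho(\cdot|x)\ll\pi(\cdot|x)$, because $\pi$ charges all of $\tilde A(x)$; hence $q^{\rho}\ll q^\pi$ by Lemma~\ref{Lemma A2EF.}, so $d=0$ holds $q^{\rho}$-a.e.\ along $\rho(\cdot|x)$, and the key formula gives $v^{\rho}=v^\pi=v^{\sigma^*}$. Passing back through Theorem~\ref{th3.4RS} then yields $v^{\pi^*}=v^{\sigma^*}$ for every policy $\pi^*$ of the submodel, as claimed. Besides the first-entrance step, the only other point requiring care is the integrability of $w$ against the measures $q^{\rho}$, which I expect to control via the uniform absorbing assumption (e.g.\ through finite-horizon truncation and the uniform convergence it provides).
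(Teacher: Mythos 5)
Your overall skeleton is the same as the paper's (reduce to stationary submodel policies via Theorem~\ref{th3.4RS}, show that every action charged by the submodel is conserving $q^\pi$-a.e., conclude that all submodel policies share the value), and your first-entrance argument and the deduction ``$d\le 0$ a.e.\ plus $\int d\,\pi(da|x)=0$ plus $\pi(a|x)>0$ forces $d=0$ a.e.'' are correct. The genuine gap is the ``key formula'' $v^\rho-v^\pi=\int_\X\int_\A d(x,a)\,\rho(da|x)\,q^\rho(dx)$, on which both your contradiction step and your final step rest. Its derivation integrates the flow identity \eqref{eqef5} against $w=v^\pi(\cdot)$ and then cancels $\int w\,dq^\rho$ against itself, which is legitimate only if $\int_\X |v^\pi(x)|\,q^\rho(dx)<\infty$. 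Your claim that uniform absorption supplies this integrability is false: condition \eqref{elimsup} controls tails of the absorption time, uniformly over policies, \emph{from the fixed initial distribution} $\mu$ and only through finite horizons; it does not make $v^\pi$ integrable against an infinite-horizon occupancy measure $q^\rho$. Concretely, with $r\equiv 1$ one has $\int_\X v^\pi(x)\,q^\rho(dx)=\sum_n E^\rho\bigl[E^\pi_{x_n}T^{\bar x};\,x_n\in\X\bigr]$, and when all continuations are optimal this equals $\sum_n E^\rho[(T^{\bar x}-n)^+]\approx\tfrac12 E^\rho[(T^{\bar x})^2]$: finiteness requires a \emph{second} moment of the absorption time under $\rho$, while uniform absorption is a first-moment condition. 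One can build a uniformly absorbing atomless MDP realizing this failure: a spine of states $s_k$ (fattened into atomless copies of $[0,1]$) on which a stationary policy $\rho$ survives with probability $P^\rho(T^{\bar x}>k)\sim 1/(k\log^2 k)$, plus at each $s_k$ an alternative ``detour'' action leading to a corridor of length $\approx v^\pi(s_{k+1})$ chosen so that both actions are conserving; then every policy is optimal, $\pi$ may charge both actions, \eqref{elimsup} holds with $\sup_\pi E^\pi\sum_{t\ge n}I\{t<T^{\bar x}\}\sim 1/\log n$, yet $E^\rho[(T^{\bar x})^2]=\infty$ and $\int v^\pi\,dq^\rho=+\infty$, so the manipulation behind your formula is an $\infty-\infty$ cancellation. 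Since you invoke the formula precisely at the stage where $d$ is not yet known to vanish, this is not a removable formality.

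The repair is the one you gesture at in your last sentence, and carrying it out essentially reproduces the paper's proof: never integrate $v^\pi$ against $q^\rho$ over an infinite horizon, but only against the finite-horizon marginals $q_n^\rho$, where integrability \emph{is} guaranteed because $E^\rho[|v^\pi(x_n)|;x_n\in\X]\le K\,E^{\rho^{n,\pi}}\sum_{t\ge n}I\{t<T^{\bar x}\}\le K\epsilon_n$ with $\rho^{n,\pi}$ the policy that follows $\rho$ up to time $n$ and $\pi$ afterwards. This is exactly how the paper proceeds: the set $X^>$ is killed by a \emph{one-time} deviation (a Markov policy differing from $\pi$ only at a single epoch $n$ with $q_n^\pi(X^>)>0$, which makes your first-entrance lemma unnecessary), $X^<$ is killed by the evaluation identity \eqref{eqpisec6}, and the final step proves $v^{\sigma^{n,\pi}}=v^\pi$ by induction in $n$ and then lets $n\to\infty$, the tail $E^{\sigma^{n,\pi}}\sum_{t\ge n}r(x_t,a_t)$ being crushed by \eqref{elimsup} and boundedness of $r$. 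The paper also tracks where $v^\pi(x)$ is even well defined (the set $X^\pi$ with $q^\pi(\X\setminus X^\pi)=0$), a point your advantage function $d(x,a)$ silently presupposes; that issue is handled by the same absolute-continuity lemma (Lemma~\ref{Lemma A2EF.}) you already use.
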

\begin{proof}  Let $N=1.$
In view of Theorem~\ref{th3.4RS},  for an absorbing MDP  $\sup_{\tilde\pi\in \S} v^{\tilde\pi}= \sup_{\tilde\pi\in \Pi} v^{\tilde\pi}$ and  $v^\sigma= \sup_{\tilde\pi\in \Pi} v^{\tilde\pi}$ for some policy $\sigma$ if and only  $v^{\sigma^*}= \sup_{\tilde\pi\in \S} v^{\tilde\pi}$  for some  stationary policy $\sigma^*.$ Recall  that  $\sup_{\tilde\pi\in \S} v^{\tilde\pi}=\sup_{\phi\in \dF} v^\phi;$ see Feinberg~\cite{F92}.


%
For an arbitrary policy $\sigma\in\Pi,$ let $X^\sigma$ be the set of initial states $x\in\X,$ for which the expected initial rewards $v^\sigma(x)$ are well-defined, that is,
 \begin{equation}\label{eqdefofXpi}
X^\sigma=\{x\in\X:\,v^\sigma_+(x)<+\infty\}\cup \{x\in\X:\,v^\sigma_-(x)<+\infty\}.
 \end{equation}
 In view of the Ionescu Tulcea theorem~\cite[Sect. V.1]{Neveu}, the functions $v^\sigma_+(x)$ and $v^\sigma_-(x)$ are Borel measurable.  Therefore, the set $X^\sigma$ is Borel as the union of two Borel sets.

For $x\in \X,$ $a\in \tilde A(x),$ and for a Borel function $f:\X\mapsto\R^1,$ let us denote
\[{\bf T}^a f(x):= r(x,a)+\int_\X f(y) p(dy|x,a),\qquad \quad x\in \X, a\in \tilde A(x).\]
This value is well-defined if either $\int_\X f^+(y) p(dy|x,a)<+\infty$ or $\int_\X f^-(y) p(dy|x,a)<+\infty.$  

Let $\sigma$ be a stationary policy in the submodel with action sets $\tilde{A}(\cdot).$  Then  ${\bf T}^a v^\sigma(x)$ is well-defined for $x\in X^\sigma$ and $a\in\tilde{A}(x),$ where  the Borel set $X^\sigma$ is defined in \eqref{eqdefofXpi}. Indeed,
\[v^\sigma_+(x)=\sum_{a\in\tilde A(x)} \sigma(a|x)\{\ r^+(x,a)+\int_\X v_+^\sigma(y) p(dy|x,a)\}<+\infty,\qquad x\in X^\sigma,\]
and
\[v^\sigma_-(x)=\sum_{a\in\tilde A(x)} \sigma(a|x)\{\ r^-(x,a)+\int_\X v_-^\sigma(y) p(dy|x,a)\}<+\infty,\qquad x\in X^\sigma.\]
Therefore,
\begin{equation}\label{eqpisec6}v^\sigma(x)=v^\sigma_+(x)-v^\sigma_-(x)=\sum_{a\in\tilde A(x)} \sigma(a|x){\bf T}^av^\sigma(x),\qquad x\in X^\sigma,
\end{equation}
 and ${\bf T}^av^\sigma(x)$ is well-defined for $x\in X^\sigma$ and $a\in\tilde A(x)$ if   $\sigma(a|x)>0.$

 Observe that for an absorbing MDP $q^\sigma(\X\setminus X^\sigma)=0,$   which is equivalent to $q^\sigma(\X)=q^\sigma(X^\sigma).$     Indeed, if $q^\sigma(\X\setminus X^\sigma)>0,$ then, in view of \eqref{eqef1}, $P^\sigma\{x_n\in\X\setminus X^\sigma\}>0$ for some $n=0,1,\ldots\ .$  This implies that either $v_+^\sigma=+\infty$ or $v_-^\sigma=+\infty.$  This conclusion contradicts to the assumptions that the MDP is absorbing and the reward function $r$ is bounded.

In particular, for $\sigma=\pi,$ where the policy $\pi$ is defined in Lemma~\ref{lredcountA},
\begin{equation}\label{eqXmxpi0} q^\pi(\X\setminus X^\pi)=0.\end{equation}

By Lemma~\ref{lredcountA}, $v^\pi=v=v^{\sigma^*}.$ Consider the sets
\[ X^>:=\{x\in X^\pi:\, {\bf T}^a v^\pi(x)>v^\pi(x)\ {\rm for\ some\ } a\in {\tilde A}(x)\},\]
\[ X^<:=\{x\in X^\pi:\, {\bf T}^a v^\pi(x)< v^\pi(x)\ {\rm for\ some\ } a\in {\tilde A}(x)\},\]
\[ X^=:=\{x\in X^\pi:\, {\bf T}^a v^\pi(x)= v^\pi(x)\ {\rm for\ all\ } a\in {\tilde A}(x)\}.\]

 The sets $X^>,$ $X^<,$ and $X^=$ are Borel. Indeed,  the set $X^>$ is a projection of the Borel set $Y(\pi):=\{(x,a)\in {\rm Gr}_{X^\pi}(\tilde{A}):\, {\bf T}^a v^\pi(x)>v^\pi(x)\}$ onto $X^\pi.$ In addition, each action set ${\tilde A}(x),$ $x\in \X,$ is finite or countable.  Therefore, in view of the  Arsenin-Kunugui   theorem, the set $X^>$ is Borel  and there exists a Borel mapping $\varphi^*:X^>\mapsto \A$ such that $\varphi^*(x)\in \tilde A(x)$ and ${\bf T}^{\varphi^*(x)}v^\pi(x)>v^\pi(x)$ for all $x\in X^>.$
 The set $X^<$ is Borel because it is a projection of the Borel set $\{(x,a)\in {\rm Gr}_{X^\pi}(\tilde{A}):\, {\bf T}^a v^\pi(x)<v^\pi(x)\}$ onto $\X.$
 Thus, $X^==X^\pi\setminus (X^>\cup X^<)$ is a Borel set too.

 Observe that \begin{equation}\label{eqYst00}
  q^\pi(X^<) =q^\pi(X^>)=0.\end{equation}
To prove the second equality in \eqref{eqYst00}, suppose that $q^\pi(X^>)>0.$  Therefore, $q_n^\pi(X^>)=P^\pi\{x_n\in X^>\}>0$ for some $n=0,1,\ldots\ .$ For the Borel mapping $\varphi^*$ described in the previous paragraph,   consider a randomized Markov policy $\pi^\prime$
\[
 \pi^\prime_t(B|x)=\begin{cases} I\{\varphi^*(x)\in B\}, & {\rm if}\ t=n\ {\rm and}\ x\in X^>,\\
 \pi(B|x), &{\rm otherwise},
 \end{cases}
 \]
 where $B\in{\cal A}$ and $t=0,1,\ldots\ .$
  Straightforward calculations imply that
\[{ v}^{\pi^\prime}-{ v}^\pi=\int_{X^>} [{\bf T}^{\varphi^*(x)}{v}^\pi(x)-{ v}^\pi(x)]q_n^\pi(dx)>0,
\]
which contradicts $v^\pi=v^{\sigma^*}=\sup_{\sigma\in \S} v^\sigma =\sup_{\sigma\in \Pi} v^\sigma\ge v^{\pi^\prime},$ where the last equality follows from Theorem~\ref{th3.4RS}.   Thus,  the second equality in \eqref{eqYst00} is proved.

The equality $q^\pi(X^<)=0$ holds because the inequality $q^\pi(X^<)>0$ is impossible.  Indeed, if $q^\pi(X^<)>0,$ then $q^\pi(X^<\setminus X^>)=q^\pi(X^<)>0$ because $q^\pi(X^>)=0.$  Therefore,
\[0=\int_{X^<\setminus X^>} (v^\pi(x)-v^\pi(x))q^\pi(dx)=\int_{X^<\setminus X^>}\sum_{a\in\tilde A(x)}\pi(a|x)({\bf T}^av^\pi(x)-v^\pi(x))q^\pi(dx)<0,
\]
where the second equality follows from \eqref{eqpisec6} and the inequality holds because an integral of a negative  function on a set with a positive measure is negative.  The function is negative because $\pi(a|x)>0$ for all $a\in \tilde A(x),$ the difference in the second integral is nonpositive for all $a\in \tilde A(x),$ and this difference is negative for some
$a\in \tilde A(x),$ where $x\in {X^<\setminus X^>}.$  Equalities  \eqref{eqYst00} are proved.

The equality $v^{\pi^*}=v^{\sigma^*}$ holds for every policy $\pi^*$ in the submodel with action sets $\tilde{A}(\cdot)$ if and only if $v^\sigma=v^\pi$ for every stationary policy $\sigma$ in this submodel.  This is true in view of Theorem~\ref{th3.4RS} and because $v^\pi=v^{\sigma^*}=v.$
Let $\sigma$ be a stationary policy for the submodel with action sets ${\tilde A}(\cdot).$ To complete the proof, we  show in the rest of the proof that $v^\sigma=v^\pi.$

Since $\sigma(\cdot|x)\ll \pi (\cdot|x)$ for all $x\in \X,$   Lemma~\ref{Lemma A2EF.} and formulae \eqref{eqXmxpi0}, \eqref{eqYst00} imply that $ q^\sigma(\X\setminus X^=) =0.$ Let $\sigma^{n,\pi}$ be the policy that follows $\sigma$ at times $t=0,1,\ldots,n-1$ and follows $\pi$ at $t=n,n+1,\ldots.$ In particular, $\sigma^{0,\pi}=\pi.$  Induction arguments imply that
\begin{equation}\label{eqingsnpi} v^{\sigma^{n,\pi}}=v^\pi,\qquad\qquad n=0,1,\ldots\ .\end{equation}
Indeed, for $n=0$ formula~\eqref{eqingsnpi}  holds because $\sigma^{0,\pi}=\pi.$  If \eqref{eqingsnpi} holds for some $n=0,1,\ldots\,$ then
\[v^{\sigma^{n+1,\pi}}(x)=\sum_{a\in \tilde{A}(x)} \sigma(a|x)T^av^\pi(x)=v^\pi(x),\qquad x\in X^=,\]
and
\[
v^{\sigma^{n+1,\pi}}=\int_\X v^{\sigma^{n+1,\pi}}(x)\mu(dx)=\int_{X^=} v^\pi(x)\mu(dx)=\int_\X v^\pi(x)\mu(dx)=v^\pi,
\]
where the last equalities hold because $\mu(\X\setminus X^=)=0$ since $\mu\ll q^\pi$ and $q^\pi(\X\setminus X^=)=0$ in view of  \eqref{eqXmxpi0} and \eqref{eqYst00}. Formula \eqref{eqingsnpi} is proved.

Since the MDP is uniformly absorbing,
\[ \lim_{n\to\infty} E^{\sigma^{n,\pi}}\sum_{t=n}^\infty I\{t<T^{\bar x}\}=\lim_{n\to\infty}\sup_{\tilde{\pi}\in M}E^{\tilde\pi} \sum_{t=n}^\infty I\{t<T^{\bar x}\}=0.\]
Since the reward function $r$ is bounded,
\[ \lim_{n\to\infty}E^{\sigma^{n,\pi}}\sum_{t=n}^\infty r(x_t,a_t)=0.
\]
Therefore,
\[
v^\sigma=\lim_{n\to\infty}E^\sigma\sum_{t=0}^{n-1} r(x_t,a_t)=\lim_{n\to\infty}E^\sigma\sum_{t=0}^{n-1} r(x_t,a_t)+\lim_{n\to\infty}E^{\sigma^{n,\pi}}\sum_{t=n}^\infty r(x_t,a_t)=\lim_{n\to\infty}v^{\sigma^{n,\pi}}=v^\pi,\]
where the last equality follows from \eqref{eqingsnpi}.
\end{proof}
The following lemma is correct without the assumption that the MDP is atomless.  However, we need it only for an atomless MDP in this paper, and for an atomless MDP the proof follows directly from Theorem~\ref{toptimsingle}.

\begin{corollary}\label{coropt2}
Consider a uniformly absorbing atomless MDP with $N=1.$ For every extreme point $v\in\cal V$ of the set $\cal V$  there exists a deterministic policy $\phi$ such that $v^\phi=v.$
\end{corollary}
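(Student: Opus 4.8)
The plan is to exploit the one-dimensionality of $\cal V$ together with Theorem~\ref{toptimsingle}. By Lemma~\ref{lem3.1} the set $\cal V$ is a convex subset of $\R^1$, hence an interval. First I would observe that an extreme point of an interval is necessarily one of its endpoints, and since $v\in\cal V$ is assumed to belong to the set, it is attained: either $v=\sup_{\pi\in\Pi}v^\pi$ or $v=\inf_{\pi\in\Pi}v^\pi$, with the supremum (respectively infimum) being a maximum (respectively minimum).

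Consider first the case $v=\sup_{\pi\in\Pi}v^\pi$. Since ${\cal V}^\S={\cal V}$ by Lemma~\ref{lem3.1}, we have $\sup_{\sigma\in\S}v^\sigma=\sup_{\pi\in\Pi}v^\pi=v$, and because $v\in\cal V$ is attained, there is a stationary policy $\sigma^*$ with $v^{\sigma^*}=v=\sup_{\sigma\in\S}v^\sigma$. This is exactly the hypothesis of Theorem~\ref{toptimsingle}. Applying that theorem to $v=v^{\sigma^*}$ yields a submodel with nonempty finite or countable action sets ${\tilde A}(\cdot)$, furnished by Lemma~\ref{lredcountA}, such that $v^{\pi^*}=v$ for \emph{every} policy $\pi^*$ in the submodel. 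The submodel is well-defined: its graph ${\rm Gr}_\X({\tilde A})$ is Borel and each ${\tilde A}(x)$ is nonempty and at most countable, so by the Arsenin--Kunugui selection theorem it admits a measurable selector $\phi$. This $\phi$ is a deterministic policy of the submodel, hence $v^\phi=v$, and since ${\tilde A}(x)\subset A(x)$ for all $x\in\X$, $\phi$ is also a deterministic policy of the original MDP. This settles the case $v=\sup$.

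For the case $v=\inf_{\pi\in\Pi}v^\pi$, I would apply the same argument to the MDP obtained by negating the reward function, ${\tilde r}:=-r$. This MDP is again uniformly absorbing, atomless, and single-criterion, and its performance set equals $-{\cal V}$; the point $-v$ is then the maximum of $-{\cal V}$, i.e.\ an extreme point of the type handled above. The deterministic policy $\phi$ produced for the negated model satisfies $E^\phi\sum_{t=0}^{T^{\bar x}-1}(-r)(x_t,a_t)=-v$, which is precisely $v^\phi=v$ in the original model.

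The substantive content is entirely carried by Theorem~\ref{toptimsingle}, which does the heavy lifting of showing that every policy in the reduced submodel achieves the optimal value; the remaining steps are routine. The main points to be careful about are the reduction of ``$v$ is an extreme point of $\cal V$'' to ``$v$ is an attained maximum or minimum'' (using convexity of $\cal V$ in $\R^1$), and the symmetric treatment of the two endpoints via the substitution $r\mapsto-r$, together with verifying that the submodel indeed admits a deterministic selector.
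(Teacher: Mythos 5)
Your proof is correct and follows essentially the same route as the paper: identify the extreme point with the attained supremum (or infimum, reduced via $r:=-r$), note that attainment in ${\cal V}={\cal V}^\S$ gives a stationary policy $\sigma^*$ meeting the hypothesis of Theorem~\ref{toptimsingle}, and then take any deterministic selector of the submodel from Lemma~\ref{lredcountA}. The only differences are cosmetic --- the paper cites Theorem~\ref{th3.4RS} where you cite Lemma~\ref{lem3.1}, and you spell out the Arsenin--Kunugui selection step that the paper leaves implicit in its Section~\ref{s6} discussion of well-defined submodels.
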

\begin{proof}
Since $N=1,$ the closure of the convex set $\cal V$ is a bounded interval on the line.  Therefore, there could be at most two extreme points $v_*:=\inf_{\pi\in\Pi} v^\pi$ and
$v^*:=\sup_{\pi\in\Pi} v^\pi.$  Let us consider $v=v^*.$ Theorem~\ref{th3.4RS} implies that  $v=\sup_{\pi\in \S} v^\pi.$ According to Theorem~\ref{toptimsingle},  $v^\phi=v$ for every deterministic policy $\phi$ in the submodel, whose existence is stated in Lemma~\ref{lredcountA}. The change $r:=-r$ reduces the case  $v=v_*$ to the case  $v=v^*.$
\end{proof}

For $i=1,\ldots,N,$ let us denote by $b_{-i}$ the projection of $b\in \R^N$ to $\R^{N-1}$ obtained by removing the $i$-th coordinate of the vector $b.$ Also, $\langle\cdot,\cdot\rangle$ denotes the scalar product of two vectors.  
\begin{definition}{\rm We say that \emph{a point $v\in {\cal V}$ allows the dimensionality reduction,} if there is a coordinate $i=1,2,\ldots,N,$ a vector $b\in\R^{N-1},$  a constant $d,$ and a submodel  $\{{\X}, \A,{\tilde A}(\cdot), p, r\}$ of the original MDP such that $v\in {\tilde {\cal V}},$ where $\tilde{\cal V}$ is the performance set for all policies in the submodel, and
\begin{equation}{\tilde v}^{(i)}= d+\langle b,{\tilde v}_{-i}\rangle \qquad {\rm for\ all}\qquad {\hat v}\in{\tilde {\cal V}}.\label{edimred}\end{equation}
}
\end{definition}
The following theorem plays an important role in the proof of Theorem~\ref{tgmain}.  Recall that  $\partial (C)$ is the boundary of a bounded convex set $C\in \R^n,$ $n=1,2,\ldots\ .$
\begin{theorem}\label{tdimred} {\rm (Dimensionality reduction).}
For a uniformly absorbing atomless MDP, 
each point on the boundary of ${\cal V}$ allows the dimensionality reduction.
\end{theorem}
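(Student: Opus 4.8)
The plan is to combine the convexity of ${\cal V}$ with the single-criterion optimality result of Theorem~\ref{toptimsingle} via a scalarization. Fix a boundary point $v\in\partial({\cal V})$. Since the reward vector-function $r$ is bounded and $E^\pi T^{\bar x}\le L$ for all $\pi$, each coordinate satisfies $|v^{(i),\pi}|\le\|r^{(i)}\|_\infty L$, so ${\cal V}$ is bounded, and by Lemma~\ref{lem3.1} it is convex. Its closure is therefore a compact convex body, and because $v$ is not an interior point, the supporting hyperplane theorem supplies a nonzero vector $c\in\R^N$ with $\langle c,v\rangle=\max_{w\in{\cal V}}\langle c,w\rangle$. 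The key observation is that $\langle c,v^\pi\rangle=E^\pi\sum_{t=0}^{T^{\bar x}-1}\langle c,r(x_t,a_t)\rangle$ is exactly the expected total reward of $\pi$ in the single-criterion MDP obtained from the same dynamics by replacing $r$ with the bounded scalar reward $g:=\langle c,r\rangle$. Thus $v$ maximizes this scalar criterion over ${\cal V}={\cal V}^\S$.

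Next I would realize $v$ inside a small submodel. Applying Lemma~\ref{lredcountA} to the vector-valued MDP at the point $v$ produces a submodel $\{\X,\A,\tilde A(\cdot),p,r\}$ with finite or countable action sets and a stationary policy $\pi$ for it such that $v^\pi=v$ (full vector equality) and $\pi(a|x)>0$ for all $x\in\X$ and $a\in\tilde A(x)$. In particular $v=v^\pi\in\tilde{\cal V}$, which secures the membership requirement in the definition of dimensionality reduction. It is crucial here to use the \emph{vector} version of Lemma~\ref{lredcountA}, so that the full vector $v$, and not merely its scalar value, lies in $\tilde{\cal V}$.

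The heart of the argument is to show that the scalar criterion $g$ is constant on the performance set $\tilde{\cal V}$ of this submodel. For the reward $g$, the stationary policy $\pi$ attains $\langle c,v^\pi\rangle=\langle c,v\rangle=\sup_{\sigma\in\S}\langle c,v^\sigma\rangle$, so it is optimal among stationary (hence, by Theorem~\ref{th3.4RS}, among all) policies, and it charges every available action. Because the submodel has countable action sets, $\pi(a|x)>0$, and $\pi$ attains the single-criterion optimum $\langle c,v\rangle$, the pair (submodel, $\pi$) is an instance of the submodel and policy whose existence is asserted by Lemma~\ref{lredcountA} for the value $\langle c,v\rangle$. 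Consequently Theorem~\ref{toptimsingle}, applied to the single-criterion model with reward $g$, yields $\langle c,v^{\pi^*}\rangle=\langle c,v\rangle$ for every policy $\pi^*$ in the submodel; that is, $\tilde{\cal V}\subseteq\{w\in\R^N:\langle c,w\rangle=\langle c,v\rangle\}$.

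It then remains to rewrite this hyperplane constraint in the affine form required by \eqref{edimred}. Choose a coordinate $i$ with $c^{(i)}\ne 0$, put $d:=\langle c,v\rangle/c^{(i)}$, and let $b\in\R^{N-1}$ have entries $b_j=-c^{(j)}/c^{(i)}$ for $j\ne i$. Solving $\langle c,\hat v\rangle=\langle c,v\rangle$ for the $i$-th coordinate gives $\hat v^{(i)}=d+\langle b,\hat v_{-i}\rangle$ for every $\hat v\in\tilde{\cal V}$, which is precisely \eqref{edimred}; together with $v\in\tilde{\cal V}$ this shows that $v$ allows the dimensionality reduction. I expect the main obstacle to be the third step: one must verify carefully that the submodel extracted from Lemma~\ref{lredcountA} for the vector problem still satisfies the hypotheses of Theorem~\ref{toptimsingle} for the scalarized reward $g$, and it is exactly here that both the positivity $\pi(a|x)>0$ and the optimality of $\pi$ for $g$ are needed to force the performance set of the submodel into a single hyperplane.
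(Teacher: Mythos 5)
Your proposal is correct and follows essentially the same route as the paper's own proof: a supporting hyperplane at the boundary point, scalarization of the reward by the normal vector, application of the vector version of Lemma~\ref{lredcountA} to obtain a countable-action submodel and a fully supported stationary policy realizing $v$, then Theorem~\ref{toptimsingle} to force the scalarized criterion to be constant on the submodel's performance set, and finally the algebraic rewriting into the affine form \eqref{edimred}. The subtlety you flag at the end—that the submodel must come from the vector problem so that $v$ itself lies in $\tilde{\cal V}$, while the pair still qualifies as an instance of Lemma~\ref{lredcountA} for the scalar value—is exactly how the paper argues.
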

\begin{proof}
Let $v^*\in \partial( {\cal V}).$
 Let $\langle{\tilde b}, v\rangle={\tilde d}$ be a supporting hyperplane at the point $v^*$ to the convex set $\cal V$  such that $\langle{\tilde b}, v\rangle \le {\tilde d}$ for all $v\in {\cal V}$ and $\langle{\tilde b}, v^*\rangle={\tilde d},$ where ${\tilde b}^{(i)}\ne 0$ for at least one $i=1,\ldots,N.$ Let us define the one-step reward function
\[{\tilde r}(x,a):=\langle{\tilde b},r(x,a)\rangle,\qquad\qquad x\in \X, a\in A (x).
\]
Let ${\tilde v}^\sigma$   be the expected total rewards for this reward function, initial distribution $\mu,$ and a policy $\sigma.$ Then ${\tilde v}^\sigma=\langle {\tilde b}, v^\sigma\rangle.$

Since $v^*\in {\cal V},$ then $v^*=v^{\sigma^*}$ for a stationary policy  $\sigma^*\in \S.$ Using Lemma~\ref{lredcountA}, consider  the corresponding submodel with finite or countable action sets ${\tilde A}(\cdot)$ and a stationary policy $\pi$ for this submodel, where ${\tilde {\cal V}}$ is the performance set for the submodel.  
In particular, $v^\pi=v^*\in {\tilde {\cal V}}.$ Note that  ${\tilde v}^\pi={\tilde d}=\sup_{v\in\cal V} v=\sup_{\sigma\in \S} {\tilde v}^\sigma.$  In view of Theorem~\ref{toptimsingle},
\begin{equation}\label{eq24}{\tilde v}^\pi=\langle {\tilde b},{\hat v}\rangle \qquad {\rm for\ all}\qquad  {\hat v}\in {\tilde {\cal V}}.\end{equation}
 Formula~\eqref{eq24} implies \eqref{edimred} with $d:=\langle {\tilde b}, v^*\rangle/{\tilde b}^{(i)}$ and $b:=-{\tilde b}_{-i}/{\tilde b}^{(i)},$ where $i=1,\ldots,N$ with $\tilde{b}^{(i)}\ne 0$ and  $\tilde{b}^{(i)}$ is the $i^{\rm th}$ coordinate of the vector $\tilde{b}.$ 
\end{proof}

\section{An MDP Defined by Two Deterministic Policies}\label{s7}
Let $\phi^0$ and $\phi^1$ be two deterministic policies.  These two policies are considered to be fixed within this section.  Let us define action sets $A^*(x):=\{\phi^0(x), \phi^1(x)\}$ and consider an MDP, which is the submodel obtained from the original MDP by narrowing the action sets $A(x)$ to $A^*(x)$ for all $x\in \X.$  We say that this MDP is defined by the deterministic policies $\phi^0$ and $\phi^1.$

Consider  the stationary policy $\pi^*:$
\begin{equation}\label{eqdefpist}\pi^*(B|x):=\frac{1}{2}[I\{\phi^0(x)\in B\}+I\{\phi^1(x)\in B\}],\qquad B\in{\cal A}, x\in\X,\end{equation}
which  averages the deterministic policies $\phi^0$ and $\phi^1.$   We denote by $q$ the occupancy measure $q^{\pi^*}$ on $\X,$
\begin{equation} \label{edrfqEFEF}q(Y):= q^{\pi^*}(Y),\qquad\qquad\qquad Y\in{\cal X.} \end{equation}
 \begin{lemma}\label{ldominphi01}
 $q^\gamma\ll q$ for every stationary policy $\gamma$ for the MDP defined by two deterministic policies $\phi^0$ and $\phi^1.$
 \end{lemma}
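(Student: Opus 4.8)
The plan is to reduce the claim directly to Lemma~\ref{Lemma A2EF.}, which asserts that absolute continuity of the conditional action distributions, $\gamma(\cdot|x)\ll\pi^*(\cdot|x)$ for all $x\in\X$, already forces $q^\gamma\ll q^{\pi^*}=q$. Thus the entire task reduces to verifying this pointwise absolute continuity for an arbitrary stationary policy $\gamma$ of the MDP defined by $\phi^0$ and $\phi^1$.

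First I would record two elementary facts about the relevant conditional measures. Since $\gamma$ is a stationary policy for this submodel, whose action sets are $A^*(x)=\{\phi^0(x),\phi^1(x)\}$, we have $\gamma(A^*(x)|x)=1$; that is, $\gamma(\cdot|x)$ is concentrated on the (at most two-point) set $\{\phi^0(x),\phi^1(x)\}$. On the other hand, by its definition in \eqref{eqdefpist}, the averaging policy $\pi^*$ assigns strictly positive mass to each point of $A^*(x)$: mass $\tfrac12$ to each of $\phi^0(x)$ and $\phi^1(x)$ when these points differ, and mass $1$ to the common point when $\phi^0(x)=\phi^1(x)$.

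Next I would combine these two observations. Let $B\in{\cal A}$ satisfy $\pi^*(B|x)=0$. By \eqref{eqdefpist} this is equivalent to $\phi^0(x)\notin B$ and $\phi^1(x)\notin B$, so $B$ is disjoint from $\{\phi^0(x),\phi^1(x)\}$. Since $\gamma(\cdot|x)$ is carried by this two-point set, it follows that $\gamma(B|x)=0$. Hence $\gamma(\cdot|x)\ll\pi^*(\cdot|x)$ for every $x\in\X$, and Lemma~\ref{Lemma A2EF.} then yields $q^\gamma\ll q^{\pi^*}=q$, which is the assertion.

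There is no serious obstacle here: the argument is a direct specialization of Lemma~\ref{Lemma A2EF.} made possible by the finiteness of the action sets in the submodel. The only point that needs any care is the bookkeeping in the degenerate case $\phi^0(x)=\phi^1(x)$, where $\pi^*(\cdot|x)$ collapses to a single atom; but even then it charges the entire support of $\gamma(\cdot|x)$, so the required absolute continuity persists.
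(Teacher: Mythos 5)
Your proof is correct and follows exactly the paper's argument: the paper likewise deduces the lemma from Lemma~\ref{Lemma A2EF.} by noting that $\gamma(\cdot|x)\ll\pi^*(\cdot|x)$ for all $x\in\X$. The only difference is that you spell out the verification of this pointwise absolute continuity (including the degenerate case $\phi^0(x)=\phi^1(x)$), which the paper treats as immediate.
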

 \begin{proof} This lemma follows from Lemma~\ref{Lemma A2EF.} since
  $\gamma(\cdot|x)\ll \pi^*(\cdot|x),$ $x\in \X,$  for each stationary policy $\gamma$ for the MDP defined by two deterministic policies $\phi^0$ and $\phi^1.$
\end{proof}

The following lemma provides a useful inequality.

\begin{lemma}\label{lobvineq}  For every stationary policy $\gamma$ for the MDP defined by two deterministic policies $\phi^0$ and $\phi^1,$ the inequality $E^\gamma f(x_t)\le 2^tE^{\pi^*} f(x_t)$ holds
for an arbitrary nonnegative measurable function $f$ and for each $t=0,1,\ldots\ .$
\end{lemma}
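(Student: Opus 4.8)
The plan is to prove the asserted inequality \emph{simultaneously} for all nonnegative measurable $f$ by induction on $t$, keeping $f$ universally quantified at every stage so that the inductive hypothesis can be reapplied to functions built out of $f$. Writing $q_t^\gamma$ and $q_t^{\pi^*}$ for the $t$-step marginals $P^\gamma\{x_t\in\cdot\}$ and $P^{\pi^*}\{x_t\in\cdot\}$, the statement is equivalent to the measure domination $q_t^\gamma\le 2^t q_t^{\pi^*}$: the inequality for all nonnegative measurable $f$ (in particular indicators) is exactly domination of measures, and integrating a dominated measure against any nonnegative $f$ recovers the inequality.

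For the base case $t=0$, both marginals equal the fixed initial distribution $\mu$ (as in the computation $q_0^\pi=\mu$ preceding \eqref{eqef4}), so $E^\gamma f(x_0)=E^{\pi^*}f(x_0)$ and the bound holds with equality. The crux is a pointwise domination of the one-step transition operators. Since $\gamma$ is a stationary policy for the submodel defined by $\phi^0$ and $\phi^1$, the measure $\gamma(\cdot|x)$ is supported on the at most two actions $\{\phi^0(x),\phi^1(x)\}$ with weights summing to $1$; hence for any nonnegative measurable $\psi$ each weight being at most $1$ gives
\[\int_\A\psi(x,a)\,\gamma(da|x)\le \psi(x,\phi^0(x))+\psi(x,\phi^1(x)).\]
On the other hand, the averaging policy $\pi^*$ of \eqref{eqdefpist} yields exactly $\int_\A\psi(x,a)\,\pi^*(da|x)=\tfrac12[\psi(x,\phi^0(x))+\psi(x,\phi^1(x))]$, a formula that remains valid when $\phi^0(x)=\phi^1(x)$, in which case $\pi^*(\cdot|x)$ is the point mass at that action. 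Combining the two displays gives the key inequality
\[\int_\A\psi(x,a)\,\gamma(da|x)\le 2\int_\A\psi(x,a)\,\pi^*(da|x)\qquad\text{pointwise in }x.\]

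For the inductive step, assume the claim at stage $t$. Using the one-step conditioning identity \eqref{eqef4} integrated against $f$, write $E^\gamma f(x_{t+1})=E^\gamma F(x_t)$ and $E^{\pi^*}f(x_{t+1})=E^{\pi^*}G(x_t)$, where $F(x):=\int_\A\int_\X f(y)p(dy|x,a)\gamma(da|x)$ and $G(x):=\int_\A\int_\X f(y)p(dy|x,a)\pi^*(da|x)$ are nonnegative. Taking $\psi(x,a):=\int_\X f(y)p(dy|x,a)$ in the key inequality gives $F(x)\le 2G(x)$ pointwise, and applying the inductive hypothesis to the nonnegative function $G$ yields
\[E^\gamma f(x_{t+1})=E^\gamma F(x_t)\le 2\,E^\gamma G(x_t)\le 2\cdot 2^t E^{\pi^*}G(x_t)=2^{t+1}E^{\pi^*}f(x_{t+1}),\]
closing the induction. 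The main points to watch, neither of them serious, are keeping $f$ arbitrary so the hypothesis can be reused on the auxiliary function $G$, and verifying that the factor-$2$ kernel domination survives the degenerate case $\phi^0(x)=\phi^1(x)$; the accrual of one factor of $2$ per transition is precisely what produces the bound $2^t$.
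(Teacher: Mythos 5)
Your proposal is correct and follows essentially the same route as the paper: induction on $t$ with the key pointwise factor-$2$ domination of the one-step kernels (your $F\le 2G$ is exactly the paper's bound $E^\gamma[f(x_{t+1})|x_t]\le 2E^{\pi^*}[f(x_{t+1})|x_t]$), followed by applying the inductive hypothesis to the auxiliary nonnegative function $G$. Your explicit treatment of the degenerate case $\phi^0(x)=\phi^1(x)$ is a small point of extra care that the paper glosses over, but it changes nothing substantive.
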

\begin{proof}  The proof is based on the induction in $t.$  Since $E^\gamma f(x_0)=\int_\X f(x)\mu(dx)$ for every stationary policy $\gamma,$ the inequality holds for $t=0$ in the form of the equality. Let this inequality  hold for some $t=0,1,\ldots\ .$ Then
\begin{equation}\label{econdexp}
\begin{aligned}E^\gamma[f(x_{t+1})|x_t]=\int_\X f(x)\sum_{i=0}^1\gamma(\phi^i(x_t)|x_t)p(dx|x_t,\phi^i(x_t))\le \int_\X f(x)\sum_{i=0}^1p(dx|x_t,\phi^i(x_t))\\ 
=2\int_\X f(x)\sum_{i=0}^1\frac{1}{2}p(dx|x_t,\phi^i(x_t))=2E^{\pi^*}[f(x_{t+1})|x_t],\qquad\qquad\qquad\qquad\qquad
\end{aligned}
\end{equation}
where the first and the last equalities follow from the definitions of strategic measures, and the inequality and the second equality are obvious.  Therefore,
$E^\gamma f(x_{t+1})  =  E^\gamma E^\gamma[f(x_{t+1})|x_t]\le 2E^\gamma E^{\pi^*}[f(x_{t+1})|x_t] \le  2^{t+1}E^{\pi^*} E^{\pi^*}[f(x_{t+1})|x_t]=2^{t+1}E^{\pi^*} f(x_{t+1}),
$
where the first and the last equalities follow from the  definition of a conditional expectation, the first inequality follows from \eqref{econdexp}, and the second inequality follows from the induction assumption.
\end{proof}

\begin{corollary}\label{coropt1EFS6} For $t=0,1,\ldots $ and for every $Y\in\cal X,$ the inequality
$q_t^\gamma(Y)\le 2^tq_t(Y)$ holds for every stationary policy for the MDP defined by two deterministic policies $\phi^0$ and $\phi^1.$
\end{corollary}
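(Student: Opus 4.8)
The plan is to obtain this corollary as an immediate specialization of Lemma~\ref{lobvineq} to indicator functions. First I would recall the definitions: by the definition of $q_t^\gamma$ we have $q_t^\gamma(Y)=P^\gamma\{x_t\in Y\}=E^\gamma I\{x_t\in Y\}$, and since $q=q^{\pi^*}$ by \eqref{edrfqEFEF}, the marginal $q_t$ is $q_t(Y)=q_t^{\pi^*}(Y)=P^{\pi^*}\{x_t\in Y\}=E^{\pi^*}I\{x_t\in Y\}$. Thus both quantities appearing in the claimed inequality are expectations of the indicator of $Y$ evaluated at $x_t$, one under $\gamma$ and one under $\pi^*$.

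Next I would apply Lemma~\ref{lobvineq} with the nonnegative measurable function $f(x):=I\{x\in Y\}$, which is a legitimate choice since $Y\in{\cal X}$ is measurable and its indicator is nonnegative and bounded. The lemma yields $E^\gamma f(x_t)\le 2^t E^{\pi^*}f(x_t)$ for every stationary policy $\gamma$ for the MDP defined by $\phi^0$ and $\phi^1$ and for each $t=0,1,\ldots$. Rewriting the two sides by means of the identities of the previous paragraph gives exactly $q_t^\gamma(Y)\le 2^t q_t(Y)$, as required.

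There is no substantive obstacle here: the corollary is a direct reformulation of Lemma~\ref{lobvineq} in terms of state marginals, and the only point to verify is that the indicator $I\{\cdot\in Y\}$ falls within the class of functions to which the lemma applies, which it does. In particular the induction over $t$ that drives the bound has already been carried out in the proof of Lemma~\ref{lobvineq}, so nothing further needs to be iterated.
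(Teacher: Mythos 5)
Your proof is correct and is exactly the paper's argument: the paper also derives this corollary by applying Lemma~\ref{lobvineq} to the indicator function $f(x)=I\{x\in Y\}$. The identification of $q_t^\gamma(Y)$ and $q_t(Y)$ as expectations of that indicator under $\gamma$ and $\pi^*$ respectively is the only observation needed, and you have it.
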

\begin{proof}
The corollary follows from Lemma~\ref{lobvineq} applied to the function $f(x)= I\{x\in Y\},$ $x\in\X.$
\end{proof}

For two stationary policies $\pi$ and $\sigma$  for the MDP defined by two deterministic policies $\phi^0$ and $\phi^1,$  let
\begin{equation}\label{enonotcoinEF}X(\pi,\sigma):=\{x\in \X: \pi(\cdot|x)=\sigma(\cdot|x)\}=\{x\in \X: \pi(\phi^0(x)|x)=\sigma(\phi^0(x)|x)\}\end{equation}
be the set of states on which $\pi$ and $\sigma$ choose the same decisions.  In view of the last equality, this set is measurable.
\begin{lemma}\label{lem6.1} Consider a uniformly absorbing MDP. If $q(\X\setminus X(\pi,\sigma))=0,$ then $q^\pi=q^\sigma,$ where $\pi$ and $\sigma$ are arbitrary stationary policies in the MDP defined by two deterministic policies $\phi^0$ and $\phi^1$.
\end{lemma}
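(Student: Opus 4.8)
The plan is to establish the stronger, pointwise statement that the time-$n$ state marginals coincide, $q_n^\pi=q_n^\sigma$ for every $n=0,1,\ldots$, and then to sum over $n$ by means of \eqref{eqef1} to conclude $q^\pi=q^\sigma$. The argument is an induction on $n$ driven by the one-step recursion \eqref{eqef4}, and its crux is first to pin down \emph{where} the measures $q_n^\pi$ and $q_n^\sigma$ are supported.

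First I would show that, for every stationary policy $\gamma$ in the MDP defined by $\phi^0$ and $\phi^1$, the marginal $q_n^\gamma$ is concentrated on the agreement set $X(\pi,\sigma)$ from \eqref{enonotcoinEF}. Since $q=q^{\pi^*}=\sum_{t=0}^\infty q_t$ by \eqref{eqef1}, the hypothesis $q(\X\setminus X(\pi,\sigma))=0$ forces $q_t(\X\setminus X(\pi,\sigma))=0$ for every $t$. Corollary~\ref{coropt1EFS6} then gives $q_t^\gamma(\X\setminus X(\pi,\sigma))\le 2^t q_t(\X\setminus X(\pi,\sigma))=0$, so in particular both $q_n^\pi$ and $q_n^\sigma$ vanish on $\X\setminus X(\pi,\sigma)$ for all $n$.

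With this in hand the induction is routine. The base case is $q_0^\pi=q_0^\sigma=\mu$. For the inductive step, assuming $q_n^\pi=q_n^\sigma$, I would apply \eqref{eqef4}, restrict the outer integral to $X(\pi,\sigma)$ (legitimate because $q_n^\pi$ is concentrated there), use $\pi(\cdot|x)=\sigma(\cdot|x)$ on $X(\pi,\sigma)$ to replace $\pi$ by $\sigma$, invoke the inductive hypothesis to replace $q_n^\pi$ by $q_n^\sigma$, and finally re-extend the integration to all of $\X$ since $q_n^\sigma$ is likewise concentrated on $X(\pi,\sigma)$; this yields $q_{n+1}^\pi=q_{n+1}^\sigma$. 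Summing the equalities $q_n^\pi=q_n^\sigma$ over $n$ and using \eqref{eqef1} finishes the proof.

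The main obstacle is precisely the concentration step. The recursion \eqref{eqef4} lets one interchange $\pi$ and $\sigma$ in the inner integral only on the set $X(\pi,\sigma)$ where they literally agree; to discard the complement one must know that $q_n^\pi$ and $q_n^\sigma$ assign it zero mass. The hypothesis supplies this only for the reference measure $q$, so the domination estimate $q_n^\gamma\le 2^n q_n$ of Corollary~\ref{coropt1EFS6} (equivalently, the absolute continuity $q^\gamma\ll q$ of Lemma~\ref{ldominphi01}, which forces $q_n^\gamma\ll q_n$) is exactly what upgrades the $q$-almost-everywhere agreement of $\pi$ and $\sigma$ to $q_n^\pi$- and $q_n^\sigma$-almost-everywhere agreement, which is what the induction requires.
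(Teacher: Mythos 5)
Your proof is correct and takes essentially the same approach as the paper's: both arguments hinge on showing that the disagreement set $\X\setminus X(\pi,\sigma)$ is null for the state marginals of $\pi$ and $\sigma$ (you via Corollary~\ref{coropt1EFS6}, the paper via Lemma~\ref{Lemma A2EF.} and Lemma~\ref{ldominphi01}), after which the occupancy measures must coincide. Your explicit induction on $n$ through \eqref{eqef4} simply writes out the concluding step that the paper states only informally; the one small imprecision is your parenthetical claim that $q^\gamma\ll q$ by itself ``forces'' $q_n^\gamma\ll q_n$ (it does not, although Lemma~\ref{Lemma A2EF.} gives the level-$n$ statement directly, and $q_n^\gamma\le q^\gamma$ already yields the null sets you need), but this aside plays no role in your main argument.
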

\begin{proof} As follows from \eqref{eqdefpist}, $\pi(\cdot|x)\ll \pi^*(\cdot|x)$ and $\sigma(\cdot|x)\ll \pi^*(\cdot|x)$ for all $x\in \X.$   Lemma~\ref{Lemma A2EF.} implies that $q^\pi\ll q$ and $q^\sigma\ll q.$  Therefore, $q^\pi(\X\setminus X(\pi,\sigma))=0$ and $q^\sigma(\X\setminus X(\pi,\sigma))=0$ if $q(\X\setminus X(\pi,\sigma))=0.$ Thus, the set of states, on which the stationary policies $\pi$ and $\sigma$ make different decisions, will be visited with zero probability when each of these policies is used. \end{proof}

Let $d_{TV}(\eta_1,\eta_2)$ denote  the distance in total variation between two finite measures defined on the same measurable space;
see e.g., \cite[Section 2]{FKZ14} or \cite{FKZUFL} for details on definitions and properties of distances in  total variation for finite measures. Since $q^\pi(dx)=Q^\pi(dx,\A)$ for an arbitrary policy $\pi,$ then $d_{TV}(q^\pi,q^\sigma)\le d_{TV}(Q^\pi,Q^\sigma)$ for two policies $\pi$ and $\sigma.$
 As follows from Lemma~\ref{lem6.1},  $q(\X\setminus X(\pi,\sigma))=0$ implies that $q^\pi=q^\sigma.$  The following theorem, which is the main result of this section, demonstrates that the value of $q(\X\setminus X(\pi,\sigma))$ characterizes how close the measures $Q^\pi$ and $Q^\sigma$ are.  
\begin{theorem}\label{th6.2} Consider a uniformly absorbing MDP.
Let $\pi$ and $\sigma$ be two stationary policies  for the MDP defined by two deterministic policies $\phi^0$ and $\phi^1.$ Then for every $\epsilon >0$ there exists $\delta>0$ such that, if  $q(\X\setminus X(\pi,\sigma))\le \delta,$ then  $d_{TV}(Q^\pi,Q^\sigma)\le \epsilon.$
\end{theorem}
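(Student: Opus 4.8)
The plan is to control $d_{TV}(Q^\pi,Q^\sigma)$ by tracking the first time the trajectory enters the set where $\pi$ and $\sigma$ disagree, combining a union bound based on Corollary~\ref{coropt1EFS6} over a finite horizon with the uniform absorbing property on the tail. Write $D:=\X\setminus X(\pi,\sigma)$ for the disagreement set and let $\tau:=\inf\{t=0,1,\ldots:\ x_t\in D\}$ be its first entrance time. Since $\pi(\cdot|x)=\sigma(\cdot|x)$ for every $x\in\X\setminus D$ and both policies start from $\mu,$ the Ionescu--Tulcea construction of $P^\pi$ and $P^\sigma$ shows that the restrictions of these two measures to the event $\{\tau>t\}=\{x_0,\ldots,x_t\notin D\},$ regarded as measures on the histories $(x_0,a_0,\ldots,x_t,a_t),$ coincide: on this event every decision up to time $t$ is made at a state outside $D,$ where the two policies agree. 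I decompose $Q^\pi-Q^\sigma=\sum_{t=0}^\infty(Q_t^\pi-Q_t^\sigma),$ where $Q_t^\gamma(C):=P^\gamma\{(x_t,a_t)\in C\}$ for $C\in{\cal X}\otimes{\cal A};$ because $C\subset\X\times\A$ forces $x_t\ne\bar x,$ these pieces sum to the occupancy measure.

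For each $t$ the contributions coming from $\{\tau>t\}$ cancel, since the two laws agree there, so for every $C\subset\X\times\A$ only the part on $\{\tau\le t\}$ survives. As any test function is bounded by $1$ in absolute value, this yields
\[\|Q_t^\pi-Q_t^\sigma\|_{TV}\le P^\pi\{\tau\le t,\ x_t\in\X\}+P^\sigma\{\tau\le t,\ x_t\in\X\}.\]
Summing over $t$ and using the triangle inequality for the total variation norm, I obtain (up to the normalizing constant in the definition of $d_{TV}$)
\[d_{TV}(Q^\pi,Q^\sigma)\le\sum_{t=0}^\infty\big(P^\pi\{\tau\le t,\ x_t\in\X\}+P^\sigma\{\tau\le t,\ x_t\in\X\}\big).\]

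Given $\epsilon>0,$ I first use the uniform absorbing property to pick $n$ with $\sup_{\gamma\in\dM}E^\gamma\sum_{t=n}^\infty I\{t<T^{\bar x}\}\le\epsilon/4,$ which bounds the tail $\sum_{t\ge n}\big(P^\pi\{x_t\in\X\}+P^\sigma\{x_t\in\X\}\big)$ by $\epsilon/2.$ For the head $t<n,$ I estimate $P^\gamma\{\tau\le t\}\le P^\gamma\{\tau<n\}\le\sum_{s=0}^{n-1}q_s^\gamma(D)$ by the union bound; since $\pi$ and $\sigma$ are stationary policies for the MDP defined by $\phi^0$ and $\phi^1,$ Corollary~\ref{coropt1EFS6} together with $q_s(D)\le q(D)$ gives $P^\gamma\{\tau<n\}\le\sum_{s=0}^{n-1}2^s q(D)\le 2^n q(D)$ for $\gamma\in\{\pi,\sigma\}.$ Hence the head is at most $2n\,2^n q(D).$ Setting $\delta:=\epsilon/(n\,2^{n+2}),$ the hypothesis $q(\X\setminus X(\pi,\sigma))=q(D)\le\delta$ makes the head at most $\epsilon/2,$ so $d_{TV}(Q^\pi,Q^\sigma)\le\epsilon.$

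The main obstacle is the exponential degradation $q_t^\gamma(D)\le 2^t q_t(D)$ of Corollary~\ref{coropt1EFS6}, which is usable only over a finite horizon. The order of the quantifiers is therefore essential: $n$ must be chosen first, invoking the uniform absorbing property to render the tail uniformly negligible \emph{independently} of the particular stationary policies $\pi,\sigma,$ and only afterwards may $\delta$ be selected as a function of $n.$ The other delicate point is the exact cancellation of $P^\pi$ and $P^\sigma$ on $\{\tau>t\},$ which is precisely what removes the contributions of the early, high-mass times that the factor $2^t$ would otherwise render uncontrollable.
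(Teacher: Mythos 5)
Your proof is correct and follows essentially the same route as the paper's: the paper also splits each occupancy measure at the first exit time from $X(\pi,\sigma)$ (your $\tau$ is exactly its $\bar{T}^{X(\pi,\sigma)}$), uses the coincidence of $P^\pi$ and $P^\sigma$ before that time to cancel the pre-entrance parts, controls the tail $t\ge n$ uniformly via the uniformly absorbing property, and controls the head $t<n$ via Corollary~\ref{coropt1EFS6} with a union bound over the entrance time, choosing $n$ first and then $\delta$ of order $\epsilon 2^{-n}$. The only difference is packaging: the paper writes $Q^\gamma=\bar{Q}^\gamma+\hat{Q}^\gamma$ and bounds $\hat{Q}^\gamma(\X\times\A)\le\epsilon/2$, whereas you sum total-variation bounds over time slices $Q_t^\pi-Q_t^\sigma$, which is the same estimate.
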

\begin{proof} Let us fix an arbitrary $\epsilon>0.$ In this proof $\gamma$ is always a policy that is equal either to $\pi$ or to $\sigma.$ In other words, $\gamma\in\{\pi,\sigma\}.$

   We prove first the existence of $\delta>0$ such that, if  $q(\X\setminus X(\pi,\sigma))\le \delta,$ then $d_{TV}(q^\pi,q^\sigma)\le \epsilon.$    This claim follows from the following fact.  There exist a constant $\delta>0$ and measures ${\bar q}^\gamma$ and ${\hat q}^\gamma$ on $(\X,{\cal X})$ such that the inequality $q(\X\setminus X(\pi,\sigma))\le \delta$ implies the correctness of the following statements:
(i) $q^\gamma= {\bar q}^\gamma + {\hat q}^\gamma,$ (ii) ${\hat q}^\gamma(\X)\le \epsilon/2,$ and (iii) ${\bar q}^\pi = {\bar q}^\sigma.$  If this is true, then $d_{TV}(q^\pi,q^\sigma)= d_{TV}(\hat{q}^\pi,\hat{q}^\sigma)  \le\epsilon.$

Let us construct a positive constant $\delta$ and  measures
${\bar q}^\gamma$ and ${\hat q}^\gamma$ on $(\X,{\cal X})$ satisfying properties (i)--(iii).
We denote by ${\bar T}^Y:=\min\{t=0,1,\ldots: x_t\notin Y\}$ the first time the process leaves the set $Y\in\cal X$ and  define the measure
\[{\bar q}^\gamma(C)=E^\gamma \sum_{t=0}^\infty I\{x_t\in C\}I\{\bar{T}^{X(\pi,\sigma)}>t\},\qquad\qquad C\in{\cal X}.\]
Since the stationary policies $\pi$ and $\gamma$ coincide on the set $X(\pi,\sigma),$
\[{\bar q}^\pi = {\bar q}^\sigma.\]
Thus, (iii) holds. Since the MDP is uniformly absorbing, there exist $\ell=1,2,\ldots$ such that for every stationary policy $\pi^\prime$
\begin{equation}\label{efirsteste}D^{\pi^\prime}_1:= E^{\pi^\prime} \sum_{t=\ell}^\infty I\{x_t\in \X\}\le \epsilon/4.\end{equation}
In particular, \eqref{efirsteste} holds for $\pi^\prime=\gamma.$

Our next step is to show that there exists $\delta>0$ such that, if $q(\X\setminus X(\pi,\sigma))\le \delta,$ then
\begin{equation} \label{estD1EEF}
D^\gamma_2:=E^\gamma \sum_{t=0}^{\ell-1} I\{x_t\in \X\}I\{{\bar T}^{X(\pi,\sigma)}\le t\}\le\epsilon/4.
\end{equation}
Indeed, by exchanging the summation and expectation in \eqref{estD1EEF}, we have
\begin{equation}\label{estD1EEFPg}
D^\gamma_2=\sum_{t=0}^{\ell-1} P^\gamma\{x_t\in \X, {\bar T}^{X(\pi,\sigma)}\le t\}.
\end{equation}
Observe that for $t=0,1,\ldots$
\begin{eqnarray}\label{estD1EEFP2a}
 P^\gamma\{x_t\in \X, {\bar T}^{X(\pi,\sigma)}\le t\}\le\sum_{s=0}^tP^\gamma\{x_t\in\X, x_s\in\X\setminus X(\pi,\sigma)\}\le \sum_{s=0}^t q_s^\gamma(\X\setminus X(\pi,\sigma)).
\end{eqnarray}
In view of Corollary~\ref{coropt1EFS6},
\begin{equation}\label{estD1EEFP2}
\sum_{s=0}^t q_s^\gamma(\X\setminus X(\pi,\sigma))\le \sum_{s=0}^t 2^sq_s(\X\setminus X(\pi,\sigma))\le 2^t \sum_{s=0}^t q_s(\X\setminus X(\pi,\sigma))\le 2^t  q(\X\setminus X(\pi,\sigma)).
\end{equation}
Formulae (\ref{estD1EEFPg}--\ref{estD1EEFP2}) imply that $D_2^\gamma\le 2^{\ell} q(\X\setminus X(\pi,\sigma)).$  Thus, \eqref{estD1EEF} holds with $\delta=2^{-(\ell+2)}\epsilon.$

Let us define the measures
\[ {\hat q}^\gamma(C)=E^\gamma \sum_{t=0}^\infty I\{x_t\in C\}I\{\bar{T}^{X(\pi,\sigma)}\le t\},\qquad\qquad \qquad\qquad C\in\cal X. \]
Then $q^\gamma={\bar q}^\gamma+{\hat q}^\gamma.$ Thus, (i) holds.   Let $\delta=2^{-(\ell+2)}\epsilon.$ For $\gamma\in\{\pi,\sigma\}$
\begin{eqnarray*}{\hat q}^\gamma(\X)=E^\gamma \sum_{t=0}^\infty I\{x_t\in \X\}I\{\bar{T}^{X(\pi,\sigma)}\le t\}\le E^\gamma \sum_{t=0}^{\ell-1} I\{x_t\in \X\}I\{\bar{T}^{X(\pi,\sigma)}\le t\}+E^\gamma \sum_{t=\ell}^\infty I\{x_t\in \X\}\le\epsilon/2,
\end{eqnarray*}
where the last inequality follows from \eqref{estD1EEF} and \eqref{efirsteste}. Thus, (ii) holds. In view of (i)--(iii), $d_{TV}(q^\pi,q^\sigma)\le \epsilon.$

Let us prove the inequality $d_{TV}(Q^\pi,Q^\sigma)\le \epsilon.$  To do this, we consider the measures ${\bar Q}^\gamma$ and ${\hat Q}^\gamma$ on $(\X\times\A,{\cal X}\times\cal A )  $ defined by
\begin{eqnarray*}
{\bar Q}^\gamma(C\times B)=E^\gamma \sum_{t=0}^\infty I\{x_t\in C, a_t\in B\}I\{\bar{T}^{X(\pi,\sigma)}>t\},\qquad\qquad C\in{\cal X}, B\in{\cal A},\\
{\hat Q}^\gamma(C\times B)=E^\gamma \sum_{t=0}^\infty I\{x_t\in C, a_t\in B\}I\{\bar{T}^{X(\pi,\sigma)}\le t\},\qquad\qquad C\in{\cal X}, B\in{\cal A}.
\end{eqnarray*}
These two measures obviously satisfy the following properties: (${\rm i^*}$) $Q^\gamma={\bar Q}^\gamma+{\hat Q}^\gamma,$  (${\rm ii^*}$) ${\hat Q}^\gamma(\X\times\A)={\hat q}^\gamma(\X) \le \epsilon/2,$ (${\rm iii^*}$) ${\bar Q}^\pi = {\bar Q}^\sigma.$ Properties  (${\rm i^*}$)--(${\rm iii^*}$) imply $d_{TV}(Q^\pi,Q^\sigma)\le \epsilon.$
\end{proof}

Let $\Vert\cdot\Vert$ be the Euclidean norm in $\R^N.$ The following corollary follows from Theorem~\ref{th6.2}.
\begin{corollary}
Let $\pi$ and $\sigma$ be two stationary policies  in the MDP defined by two deterministic policies $\phi^0$ and $\phi^1.$ Then for every $\epsilon >0$ there exists $\delta>0$ such that the inequality $q(\X\setminus X(\pi,\sigma))\le \delta$ implies that  $\Vert v^\pi-v^\sigma\Vert\le \epsilon.$
\end{corollary}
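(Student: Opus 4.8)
The plan is to deduce this corollary directly from Theorem~\ref{th6.2} by combining the total-variation bound on occupancy measures with the integral representation of the performance vector. The key observation is that formula~\eqref{eqingjf} expresses each performance vector as $v^\pi=\int_\X\int_\A r(x,a)Q^\pi(dxda)$, so the difference $v^\pi-v^\sigma$ is the integral of the bounded reward vector-function $r$ against the signed measure $Q^\pi-Q^\sigma$. Since $r$ is bounded, the gap in performance vectors should be controlled by the total-variation distance $d_{TV}(Q^\pi,Q^\sigma)$, which Theorem~\ref{th6.2} already makes small.

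First I would write, for each coordinate $i=1,\ldots,N$,
\[
v^{(i),\pi}-v^{(i),\sigma}=\int_\X\int_\A r^{(i)}(x,a)\,(Q^\pi-Q^\sigma)(dxda),
\]
using \eqref{eqingjf}. Letting $M:=\sup_{(x,a),\,i}|r^{(i)}(x,a)|<\infty$, which is finite because $r$ is bounded, the standard estimate for integration of a bounded function against a signed measure gives
\[
|v^{(i),\pi}-v^{(i),\sigma}|\le M\, d_{TV}(Q^\pi,Q^\sigma),\qquad i=1,\ldots,N.
\]
Summing over the $N$ coordinates in the Euclidean norm then yields $\Vert v^\pi-v^\sigma\Vert\le \sqrt{N}\,M\, d_{TV}(Q^\pi,Q^\sigma)$. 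This is the only genuine estimate in the argument.

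Given an arbitrary $\epsilon>0$, the strategy is to apply Theorem~\ref{th6.2} not with $\epsilon$ itself but with the rescaled tolerance $\epsilon':=\epsilon/(\sqrt{N}\,M)$ (handling the trivial case $M=0$ separately, where $v^\pi=v^\sigma$ for all policies and any $\delta$ works). Theorem~\ref{th6.2} then supplies a $\delta>0$ such that $q(\X\setminus X(\pi,\sigma))\le\delta$ forces $d_{TV}(Q^\pi,Q^\sigma)\le\epsilon'$, and the norm estimate above converts this into $\Vert v^\pi-v^\sigma\Vert\le \sqrt{N}\,M\,\epsilon'=\epsilon$, as required.

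I do not anticipate a serious obstacle here, since Theorem~\ref{th6.2} does all the heavy lifting by reducing everything to the total-variation distance of the occupancy measures. The only point requiring mild care is the passage from the coordinatewise bound to the Euclidean-norm bound and the correct bookkeeping of the constant $\sqrt{N}\,M$ when rescaling $\epsilon$; one could equally absorb this constant by applying Theorem~\ref{th6.2} to a suitably scaled tolerance, so the constant itself is inessential. The boundedness of $r$, already standing in the hypotheses of the model, is exactly what makes the integral estimate legitimate.
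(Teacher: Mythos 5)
Your proposal is correct and follows essentially the same route as the paper: the paper likewise takes a uniform bound $K$ on the reward coordinates and applies Theorem~\ref{th6.2} with the rescaled tolerance $\epsilon/(KN^{1/2})$, relying implicitly on exactly the estimate $\Vert v^\pi-v^\sigma\Vert\le KN^{1/2}\,d_{TV}(Q^\pi,Q^\sigma)$ that you derive explicitly from \eqref{eqingjf}. Your write-up merely makes the coordinatewise integration bound and the constant bookkeeping explicit, which the paper leaves to the reader.
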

\begin{proof}

Let $K$  be a finite positive constant  satisfying $K\ge |r^{(n)}(x,a)|$ for all $n=1,\ldots,N,$ $x\in \X,$ and $a\in A(x).$ Then the corollary follows from 
Theorem~\ref{th6.2} applied to the constant $\epsilon_1:={\epsilon}/{(KN^\frac{1}{2})}$ instead of $\epsilon.$
\end{proof}
\section{Path Connectedness of the Set of Occupancy Measures Generated by Deterministic Policies}\label{s8}

 We recall that a subset $E$ of a topological space is called path-connected, if for every two  points $e_0, e_1\in E$ there exists a continuous function $g:[0,1]\mapsto E$ such that $g(0)=e_0$ and $g(1)=e_1.$ A set is called connected, if it cannot be partitioned into two nonempty subsets which are open in the relative topology induced on the set.  Of course, the validity of these properties may depend on the topology   chosen on the space.  A subset of the Euclidean space $\R^N$ is connected if and only if it is path-connected.

\begin{definition}
A subset $E$ of the set of finite measures on a measurable space is called  path-connected in  total variation, if this set is path-connected, when the set of finite measures is endowed with the metric equal to the distance in total variation.
\end{definition}

 A  sequence  $\{\nu_n\}_{n=1,2,\ldots}$ of finite measures on a measurable space $(\Omega,\cal F)$ converges setwise to a measure $\nu$ on $(\Omega,\cal F)$ if for every bounded measurable function $f:\Omega\mapsto\R$
 $
\int_\Omega f(\omega)\nu_n(d\omega)\mapsto\int_\Omega f(\omega)\nu(d\omega).
 $
Setwise convergence defines the topology of setwise convergence of measures; see e.g., Bogachev~\cite[p. 291]{Bo}.
 \begin{definition}
A subset $E$ of the space of finite measures on a measurable space is called  setwise path-connected, if this set is path-connected, when the space of finite measures is endowed with the topology of setwise convergence of measures.
\end{definition}

 In particular, a sequence of occupancy measures $\{Q_n\}_{n=1,2,\ldots}$ converges setwise to an occupancy measure $Q$ if for every bounded measurable function $f:\X\times\A\mapsto\R$
\begin{equation}\label{ewsetcon}\int_\X\int_\A f(x,a)Q_n(dx,da)\to \int_\X\int_\A f(x,a)Q(dx,da).
\end{equation}
In view of \eqref{ewsetcon}, the set  ${\cal M}^\dF$ is setwise path-connected if and only if for every two deterministic policies $\phi^0$ and $\phi^1$   there exists a map $g: [0,1] \mapsto  {\cal M}^\dF$ such that  $g(0)=Q^{\phi^0},$  $g(1)=Q^{\phi^1},$ and the function
  \begin{equation}\label{eqdefzeta}\zeta(\alpha):=\int_\X\int_\A f(x,a)g(\alpha)(dx,da)\end{equation} is continuous
for every bounded measurable function $f:\X\times \A\mapsto\R.$
\begin{theorem}\label{L1} For a uniformly absorbing atomless MDP,
 the set ${\cal M}^\dF$ is path-connected in  total variation and therefore it is setwise path-connected.
\end{theorem}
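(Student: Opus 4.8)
The plan is to connect two arbitrary deterministic policies $\phi^0$ and $\phi^1$ by an explicit path of \emph{deterministic} policies and to extract continuity directly from Theorem~\ref{th6.2}. First I would pass to the MDP defined by $\phi^0$ and $\phi^1$ in Section~\ref{s7}, whose action sets are $A^*(x)=\{\phi^0(x),\phi^1(x)\}$. In this submodel every deterministic policy is completely determined by the measurable set $D\in{\cal X}$ on which it selects $\phi^1$ (and selects $\phi^0$ off $D$); denote this policy by $\phi_D$, so that $\phi_\emptyset=\phi^0$ and $\phi_\X=\phi^1$. The idea is to grow $D$ continuously from $\emptyset$ to $\X$, measuring ``size'' by the occupancy measure $q=q^{\pi^*}$ of the averaging policy \eqref{edrfqEFEF}; since the MDP is atomless, $q$ is atomless by Lemma~\ref{lem3.3EF}, and $q(\X)=E^{\pi^*}T^{\bar x}\ge 1>0$ because $x_0\in\X$ forces $T^{\bar x}\ge1$.

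Next I would invoke the standard fact that an atomless finite measure on a standard Borel space admits a monotone family of sets exhausting its mass: there is an increasing family $\{D_\alpha\}_{\alpha\in[0,1]}$ of measurable sets with $D_0=\emptyset$, $D_1=\X$, and $q(D_\alpha)=\alpha\,q(\X)$ for all $\alpha$. (For instance, take a measurable $\xi:\X\to[0,1]$ pushing the normalized measure $q/q(\X)$ forward to Lebesgue measure on $[0,1]$, which exists by the isomorphism theorem for atomless standard Borel probability spaces, and set $D_\alpha=\{\xi<\alpha\}$ for $\alpha<1$ and $D_1=\X$.) Define $g(\alpha):=Q^{\phi_{D_\alpha}}$. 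Each $\phi_{D_\alpha}$ is a deterministic policy, so $g(\alpha)\in{\cal M}^\dF$, and $g(0)=Q^{\phi^0}$, $g(1)=Q^{\phi^1}$.

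The continuity of $g$ in total variation is exactly where Theorem~\ref{th6.2} does the work. For $\alpha<\alpha'$ the deterministic (hence stationary) policies $\phi_{D_\alpha}$ and $\phi_{D_{\alpha'}}$ can differ only on $D_{\alpha'}\setminus D_\alpha$, so $\X\setminus X(\phi_{D_\alpha},\phi_{D_{\alpha'}})\subseteq D_{\alpha'}\setminus D_\alpha$ and hence
\[
q\bigl(\X\setminus X(\phi_{D_\alpha},\phi_{D_{\alpha'}})\bigr)\le q(D_{\alpha'})-q(D_\alpha)=(\alpha'-\alpha)\,q(\X).
\]
Given $\epsilon>0$, let $\delta>0$ be the constant furnished by Theorem~\ref{th6.2}; crucially this $\delta$ depends only on $\epsilon$ and the MDP, because the threshold $\ell$ in its proof is chosen uniformly over stationary policies via the uniformly absorbing property. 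Then $|\alpha'-\alpha|\le\delta/q(\X)$ forces $q(\X\setminus X(\phi_{D_\alpha},\phi_{D_{\alpha'}}))\le\delta$, and therefore $d_{TV}(g(\alpha),g(\alpha'))=d_{TV}(Q^{\phi_{D_\alpha}},Q^{\phi_{D_{\alpha'}}})\le\epsilon$. Thus $g$ is (uniformly) continuous in total variation, exhibiting a path in ${\cal M}^\dF$ from $Q^{\phi^0}$ to $Q^{\phi^1}$; as $\phi^0,\phi^1$ were arbitrary, ${\cal M}^\dF$ is path-connected in total variation. Finally, since $d_{TV}(\eta_n,\eta)\to0$ implies $\int f\,d\eta_n\to\int f\,d\eta$ for every bounded measurable $f$, the same path $g$ satisfies that $\zeta$ in \eqref{eqdefzeta} is continuous, so ${\cal M}^\dF$ is setwise path-connected as well.

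I expect the main obstacle to be organizational rather than analytic: the entire quantitative content---that a small $q$-disagreement of two stationary policies forces their occupancy measures to be close in total variation---has already been established in Theorem~\ref{th6.2}, while the atomless hypothesis enters only through the existence of the monotone exhausting family $\{D_\alpha\}$. The one point that genuinely requires care is that the $\delta$ of Theorem~\ref{th6.2} be \emph{uniform} along the path, so that one obtains continuity rather than mere pointwise closeness; this is precisely what the uniformly absorbing assumption secures, since the constant $\ell$ underlying $\delta$ is independent of the particular policies $\pi,\sigma$.
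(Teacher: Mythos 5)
Your proposal is correct and follows essentially the same route as the paper: both pass to the two-policy submodel, use atomlessness of $q=q^{\pi^*}$ to build an increasing family of sets whose $q$-measure grows linearly in $\alpha$, define the path $\alpha\mapsto Q^{\phi_{D_\alpha}}$ of occupancy measures of deterministic policies, and extract total-variation continuity from Theorem~\ref{th6.2}, whose $\delta=2^{-(\ell+2)}\epsilon$ is indeed uniform over pairs of stationary policies, as you rightly emphasize. The only difference is organizational: the paper constructs the exhausting family through a Borel isomorphism $\psi:\X\to[0,1]$ and the quantile function of $F_\psi$ (hence its $b_{min}/b_{max}$ and $\varphi_\alpha/\bar{\varphi}_\alpha$ bookkeeping and the appeal to Lemma~\ref{lem6.1} at the endpoint), whereas your measure-preserving map $\xi$ pushing $q/q(\X)$ to Lebesgue measure is exactly $F_\psi\circ\psi$ and packages the same construction more cleanly, making the endpoints $D_0=\emptyset$ and $D_1=\X$ exact.
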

\begin{proof}  Let $\phi^0$ and $\phi^1$ be two deterministic policies.
Consider  the stationary policy $\pi^*$ defined in \eqref{eqdefpist} and the measure $q$ on $\X$ defined in \eqref{edrfqEFEF}.
The  measure $q$ is atomless in view of Lemma~\ref{lem3.3EF}. So, $q(x)=0$ for all $x\in \X.$

Let $\psi$ be an isomorphic map of $\X$ onto the closed interval $[0,1]$; that is, $\psi$ is a one-to-one measurable mapping of  $(\X,{\cal X})$ onto $([0,1],{\cal B}([0,1])).$  Observe that the function $\psi$ can be viewed as a nonnegative random
variable on the measurable space $(\X,{\cal X})$ with the distribution function
  \[F_\psi(b):= \frac{q(\{x\in \X:~\psi(x)\le b\})}{q(\X)}.\]
In particular, $F_\psi(0)=q(\{\psi^{-1}(0)\})=0,$ and the second equality holds because $ \{\psi^{-1}(0)\}$ is a singleton and the measure $q$ is atomless.  In addition, $F_\psi(1)=1$ because $\{x\in \X:~\psi(x)\le 1\}=\X.$

The distribution function $F_\psi$ is continuous.  Indeed, first observe that $F_\psi(b)=0$ for $b\le 0$ and $F_\psi(b)=1$ for $b\ge 1.$ Second, consider $b\in  [0,1]$ and observe that $F_\psi(b-)=q(\{x\in \X:~\psi(x)< b\}/q(\X),$ $b\in \R.$  Then $F_\psi(b)-F_\psi(b-)=q(\{\psi^{-1}(b)\})=0,$ where the last inequality holds because the set $\{\psi^{-1}(b)\}$ is a singleton and the measure $q$ is atomless.

The continuity of the function $F_\psi$
implies that for $\alpha\in [0,1]$
\[ F^{-1}_\psi (\alpha)=[b_{min}(\alpha),b_{max}(\alpha)],
\]
where
 $b_{min}(\alpha):=\inf\{b\ge 0:~F_\psi(b)=\alpha\};$ $b_{max}(\alpha):=\sup\{b\le 1:~F_\psi(b)=\alpha\},$
and
\begin{equation}\label{eqef7}
q(\psi^{-1}(F_\psi^{-1}(\alpha)))=0.
\end{equation}
We observe that $b_{min}(\alpha)=\inf\{b:~F_\psi(b)\ge\alpha\},$ and this function is well-studied in the literature under the names of the value-at-risk and quantile function.  The function $b_{min}(\alpha)$ is nondecreasing and left-continuous on $[0,1];$ see e.g., Embrechts and Hofert~\cite[Prop. 1(2)]{EH}.  Therefore, it is lower semicontinuous.  Since $F_\psi$ is a continuous function, the function $b_{min}(\alpha)$ is strictly increasing; see e.g., \cite[Prop. 1(7)]{EH}.

Let us consider the collection of increasing subsets $\X_\alpha\subset \X$ and ${\bar \X}_\alpha\subset \X:$
 \begin{equation}\label{edefxalppha}
  \begin{aligned}
\X_\alpha:&=&\{x\in \X:~\psi(x)<b_{min}(\alpha)\}, \qquad\qquad\qquad\qquad\quad \alpha\in [0,1],\\
{\bar \X}_\alpha:&=&\{x\in \X:~\psi(x)\le b_{max}(\alpha)\}=\X_\alpha\cup F^{-1}_\psi(\alpha), \qquad \alpha\in [0,1],
  \end{aligned}
  \end{equation}
and  define the deterministic policies $\varphi_\alpha$ and    ${\bar \varphi}_\alpha:$
\begin{equation}\label{edefxaEEEFEF}
\varphi_\alpha(x):=\left\{
\begin{array}{ll}
\phi^1(x), &{\rm if}\  x\in \X_\alpha,\\
\phi^0(x), &{\rm if}\  x\in \X\setminus \X_\alpha,
\end{array}
\qquad\qquad
\right.
{\bar \varphi}_\alpha(x):=\left\{
\begin{array}{ll}
\phi^1(x), &{\rm if}\  x\in {\bar \X}_\alpha,\\
\phi^0(x), &{\rm if}\  x\in \X\setminus {\bar \X}_\alpha.
\end{array}
\right.
\end{equation}
Observe that $q({\bar \X}_\alpha)=q(\X)F_\psi(b_{\rm max}(\alpha))=q(\X)\alpha,$ as follows from the definition of ${\bar \X}_\alpha.$ According to \eqref{eqef7}, \begin{equation}\label{qqqaEF}
q( \X_\alpha)=q({\bar \X}_\alpha)=q(\X)\alpha.
\end{equation}

Recall that $X(\varphi_\alpha,{\bar \varphi}_\alpha)$ is the set of states on which $\varphi_\alpha$ and ${\bar \varphi}_\alpha$ make the same decisions; see \eqref{enonotcoinEF}.
Since $\X\setminus X(\varphi_\alpha,{\bar \varphi}_\alpha)\subset F^{-1}_\psi(\alpha),$  equality~\eqref{eqef7} and Lemma~\ref{lem6.1} imply that $q^{\varphi_\alpha}=q^{{\bar \varphi}_\alpha}$ for all $\alpha\in [0,1].$ By definition, $\phi^0=\varphi_0$ and $\phi^1={\bar \varphi}_1.$ Thus, $q^{\phi^0}=q^{\varphi_0}$  and $q^{\phi^1}=q^{\varphi_1}.$

Observe that
\begin{equation}\label{edeltaalpha} q(\X\setminus X(\varphi_\alpha, \varphi_{\alpha+\Delta}))=q(\X\setminus X({\bar \varphi}_\alpha, {\bar \varphi}_{\alpha+\Delta}))=  q(\X)|\Delta|,\qquad\alpha,\alpha+\Delta\in [0,1],
\end{equation}
where the last equation holds because \[q(\X\setminus X({\bar \varphi}_\alpha, {\bar \varphi}_{\alpha+\Delta}))=q({\bar \X}_\alpha\bigtriangleup {\bar \X}_{\alpha+\Delta}) = q(\X)|F_\psi(b_{min}(\alpha+\Delta))-F_\psi(b_{min}(\alpha))|,\] where ${\bar \X}_\alpha\bigtriangleup {\bar \X}_{\alpha+\Delta}:=({\bar \X}_\alpha\cup {\bar \X}_{\alpha+\Delta})\setminus({\bar \X}_\alpha\cap {\bar \X}_{\alpha+\Delta})$ is the symmetric difference.
Let us define the  mapping $g,$ 
  \[g(\alpha):= Q^{\varphi_\alpha},\qquad\qquad \alpha\in [0,1].\]
As shown above,  $g(0)=Q^{\phi^0}$ and $g(1)=Q^{\phi^1}.$  Formula \eqref{edeltaalpha} and Therem~\ref{th6.2} imply that this mapping is continuous in  total variation.

\end{proof}
\begin{corollary}\label{cL1}
For a uniformly absorbing atomless MDP  the performance set ${\cal V}^\dF$ is connected.
\end{corollary}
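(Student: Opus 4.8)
The plan is to realize ${\cal V}^\dF$ as a continuous image of the setwise path-connected set ${\cal M}^\dF$, and then to invoke the elementary fact that a continuous image of a path-connected set is path-connected. All of the genuine difficulty has already been absorbed into Theorem~\ref{L1}, so this corollary should follow with only routine verifications.

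First I would introduce the projection $V:{\cal M}^\dF\to\R^N$ defined by $V(Q):=\int_\X\int_\A r(x,a)Q(dx\,da)$. By formula~\eqref{eqingjf} we have $V(Q^\phi)=v^\phi$ for every deterministic policy $\phi$, so that $V({\cal M}^\dF)={\cal V}^\dF$; thus ${\cal V}^\dF$ is precisely the image of ${\cal M}^\dF$ under $V$. Next I would check that $V$ is continuous when ${\cal M}^\dF$ is endowed with the topology of setwise convergence. This is immediate from the characterization of setwise convergence of occupancy measures in~\eqref{ewsetcon}: applying it separately to each of the bounded measurable coordinate functions $r^{(i)}$, $i=1,\ldots,N$, shows that $Q_n\to Q$ setwise implies $V(Q_n)\to V(Q)$ in $\R^N$, which is continuity of $V$.

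Finally, by Theorem~\ref{L1} the set ${\cal M}^\dF$ is setwise path-connected. Since the continuous image of a path-connected set is path-connected, the set ${\cal V}^\dF=V({\cal M}^\dF)$ is a path-connected subset of $\R^N$, and every path-connected set is connected. This yields the conclusion.

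I do not expect any real obstacle here, since the heavy lifting is done by Theorem~\ref{L1}. The only point that requires a moment's care is the choice of topology: because the reward vector-function $r$ is merely bounded and measurable rather than continuous, the projection $V$ need not be continuous for the weak topology on measures, and one must work with setwise convergence instead. This is exactly why Theorem~\ref{L1} was formulated to give setwise (and indeed total-variation) path-connectedness, so that the boundedness of $r$ suffices to transport path-connectedness from ${\cal M}^\dF$ to ${\cal V}^\dF$.
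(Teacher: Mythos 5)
Your proposal is correct and follows essentially the same route as the paper: the paper's proof also takes the setwise-continuous path $g:[0,1]\mapsto{\cal M}^\dF$ from Theorem~\ref{L1} and composes it with integration against the bounded measurable reward vector-function (via \eqref{eqingjf} and \eqref{eqdefzeta}) to obtain a continuous path in $\R^N$ joining $v^{\phi^0}$ and $v^{\phi^1}$. Your explicit framing via the projection $V$ and the fact that continuous images of path-connected sets are path-connected is just a cleaner packaging of the identical argument, including the correct observation that setwise (rather than weak) convergence is what makes boundedness and measurability of $r$ sufficient.
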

\begin{proof}
Let $\phi^0$ and $\phi^1$ be deterministic policies. Let us consider the function $g:[0,1]\mapsto {\cal M}^{\dF}$ satisfying \eqref{eqdefzeta} for all bounded measurable functions $f.$ The existence of such a function follows from Theorem~\ref{L1}.  Then the vector-function $\tilde{\zeta}(\alpha):=\int_\X\int_\A f(x,a)g(\alpha)(dx,da)$
defines a path connecting $v^{\phi^0}$ and $v^{\phi^1}$ in $\R^N.$
\end{proof}
\begin{corollary}\label{cN1Conv}
If $N=1,$ then  the set ${\cal V}^\dF$ is convex for a uniformly absorbing atomless MDP. 
\end{corollary}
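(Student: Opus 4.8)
The plan is to reduce the statement to the elementary topological fact that the connected subsets of the real line are precisely the intervals, which are convex. The substantive work has already been carried out upstream: a uniformly absorbing atomless MDP with $N=1$ satisfies all the hypotheses of Corollary~\ref{cL1}, so that corollary guarantees that the performance set ${\cal V}^\dF$ is connected. The only new ingredient needed here is the specialization to the line.

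First I would observe that when $N=1$ the set ${\cal V}^\dF$ is a subset of $\R^1=\R.$ Then I would invoke the characterization of connected subsets of $\R$: a set $E\subset\R$ is connected if and only if it is an interval, i.e., $a,b\in E$ with $a<b$ forces $[a,b]\subset E.$ Concretely, if some $c\in(a,b)$ failed to lie in ${\cal V}^\dF,$ then the two sets ${\cal V}^\dF\cap(-\infty,c)$ and ${\cal V}^\dF\cap(c,+\infty)$ would be nonempty, disjoint, relatively open, and would cover ${\cal V}^\dF,$ contradicting connectedness. Hence ${\cal V}^\dF$ contains the segment between any two of its points, which is exactly the definition of a convex subset of $\R.$

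There is essentially no obstacle at this stage; the proof is a one-line consequence of Corollary~\ref{cL1}. The genuine difficulty lies earlier, in establishing Theorem~\ref{L1} (setwise, and in fact total-variation, path-connectedness of ${\cal M}^\dF$) and its projection Corollary~\ref{cL1}. I would simply remark that for multiple criteria ($N>1$) connectedness no longer implies convexity, which is why the paper must develop the dimensionality-reduction machinery of Theorem~\ref{tdimred} and the inductive argument of Section~\ref{s9}; the single-criterion case recorded here is the base of that induction.
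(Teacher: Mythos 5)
Your proof is correct and follows essentially the same route as the paper: the paper likewise deduces convexity from Corollary~\ref{cL1} together with the one-dimensional fact (stated there loosely as ``the mean value theorem,'' i.e., the intermediate value property along the connecting path) that a connected subset of $\R$ is an interval and hence convex. Your explicit separation argument is just a standard rephrasing of that same elementary step, so there is no substantive difference.
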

\begin{proof}
Corollary~\ref{cL1} and the mean value theorem imply that the bounded one-dimensional set  ${\cal V}^\dF$ is convex.
\end{proof}
\begin{corollary}\label{cN1}  If $N=1,$ then  ${\cal V}^\dF={\cal V}$ for a uniformly absorbing atomless MDP. \end{corollary}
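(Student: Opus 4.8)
The plan is to exploit the fact that for $N=1$ both performance sets are bounded intervals, so that the equality ${\cal V}^\dF={\cal V}$ reduces to matching their endpoints. First I would record the structural facts: by Lemma~\ref{lem3.1} the set ${\cal V}$ is convex, and by Corollary~\ref{cN1Conv} the set ${\cal V}^\dF$ is convex; since $N=1$, each is a convex subset of the line, hence an interval. Both are bounded, because $r$ is bounded, say $|r|\le K$, and $q^\pi(\X)=E^\pi T^{\bar x}\le L$, so $|v^\pi|\le KL$ for every $\pi$. Together with the trivial inclusion ${\cal V}^\dF\subseteq{\cal V}$, this shows that ${\cal V}^\dF$ and ${\cal V}$ are two bounded intervals, one inside the other, and so they can differ only at their endpoints.

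The key auxiliary step is to verify that the two intervals have the same infimum and supremum. Writing $v^*:=\sup_{\pi\in\Pi}v^\pi$ and $v_*:=\inf_{\pi\in\Pi}v^\pi$, I would argue that $\sup{\cal V}^\dF=v^*$ by chaining $\sup_{\phi\in\dF}v^\phi=\sup_{\sigma\in\S}v^\sigma$ (Feinberg~\cite{F92}) with $\sup_{\sigma\in\S}v^\sigma=\sup_{\pi\in\Pi}v^\pi$, the latter following from ${\cal V}^\S={\cal V}$ in Lemma~\ref{lem3.1}. Applying the same reasoning to the reward $-r$ gives $\inf{\cal V}^\dF=v_*$. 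Consequently the convex set ${\cal V}^\dF$ contains every point strictly between $v_*$ and $v^*$, i.e.\ $(v_*,v^*)\subseteq{\cal V}^\dF\subseteq{\cal V}\subseteq[v_*,v^*]$.

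It then remains to treat the two endpoints, and this is where I would invoke Corollary~\ref{coropt2}. If $v^*\in{\cal V}$ then $v^*$ is an extreme point of the interval ${\cal V}$, so Corollary~\ref{coropt2} produces a deterministic policy attaining it, whence $v^*\in{\cal V}^\dF$; if instead $v^*\notin{\cal V}$, then $v^*\notin{\cal V}^\dF$ as well, since ${\cal V}^\dF\subseteq{\cal V}$. The same dichotomy applies to $v_*$. In every case each endpoint belongs to ${\cal V}^\dF$ exactly when it belongs to ${\cal V}$, which combined with the inclusions of the previous paragraph yields ${\cal V}^\dF={\cal V}$.

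I do not expect a serious obstacle here, since the hard analytic work has already been done: convexity of ${\cal V}^\dF$ rests on the path-connectedness established in Theorem~\ref{L1} (via Corollary~\ref{cL1}), and the attainability of extreme points rests on Corollary~\ref{coropt2}. The only genuinely delicate point in assembling these pieces is the endpoint bookkeeping, namely that one must not assume the supremum or infimum is attained in ${\cal V}$; Corollary~\ref{coropt2} is stated precisely so that \emph{if} an endpoint lies in ${\cal V}$ it is automatically an extreme point realized by a deterministic policy, and the inclusion ${\cal V}^\dF\subseteq{\cal V}$ disposes of the unattained case. Thus the whole corollary is essentially a short one-dimensional convexity-plus-endpoints argument built on the preceding results.
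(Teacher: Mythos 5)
Your proof is correct and follows essentially the same route as the paper's: both arguments use Feinberg~\cite{F92} together with Lemma~\ref{lem3.1} to identify the infima and suprema of ${\cal V}^\dF$ and ${\cal V}$, use one-dimensional convexity (Corollary~\ref{cN1Conv} and Lemma~\ref{lem3.1}) to conclude that both sets fill the open interval $(v_*,v^*)$, and then invoke Corollary~\ref{coropt2} to handle the endpoints. Your explicit endpoint dichotomy (attained versus unattained) is just a slightly more spelled-out version of the paper's statement that $v\in\{v_*,v^*\}\cap{\cal V}$ implies $v\in{\cal V}^\dF$.
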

\begin{proof} Let $v_*:=\inf_{\pi\in \S} v^\pi$ and $v^*:=\sup_{\pi\in \S} v^\pi.$ Then $-\infty<v_*<v^*<+\infty,$ where the first and the last inequality hold since  the MDP is absorbing and the reward function $r$ is bounded.
According to Feinberg~\cite{F92}, $\inf_{\phi\in\dF} v^\phi=  v_*$ and  $\sup_{\phi\in\dF} v^\phi=  v^*.$   These equalities  imply that the closures of the one-dimensional convex sets ${\cal V}^\dF$ and ${\cal V}$ are  both  equal to the closed bounded interval $[v_*,v^*].$    In addition, according to Corollary~\ref{coropt2}, if $v\in \{v_*,v^*\}\cap {\cal V},$ then $v\in  {\cal V}^\dF.$   Therefore,  ${\cal V}^\dF\supset {\cal V}$ and, by definition, ${\cal V}^\dF\subset {\cal V}.$
\end{proof}

\section{Proof of Theorem~\ref{tgmain}}\label{s9}

For the performance set of deterministic policies ${\cal V}^\dF,$ consider its closure ${\bar {\cal V}}^\dF.$  Since the set ${\cal V}^\dF$ is bounded,  ${\bar {\cal V}}^\dF$ is compact.

\begin{lemma}\label{lclos} Under the assumptions of  Theorem~\ref{tgmain}, if the set ${\cal V}^\dF$ is convex, then ${\cal V}\subset {\bar {\cal V}}^\dF.$
\end{lemma}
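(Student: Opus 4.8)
The plan is to prove Lemma~\ref{lclos} by showing that every point $v \in {\cal V}$ is a limit of performance vectors of deterministic policies, exploiting the convexity of ${\cal V}^\dF$ together with the dimensionality reduction available on the boundary. I would proceed by induction on the dimension $N$ of the reward vector. The base case $N=1$ is essentially already handled: by Corollary~\ref{cN1}, when $N=1$ we have ${\cal V}^\dF = {\cal V}$, so trivially ${\cal V} \subset {\bar{\cal V}}^\dF$. For the inductive step, assume the statement of the lemma (and the whole chain of results leading to Theorem~\ref{tgmain}) holds for all MDPs with fewer than $N$ criteria.

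First I would fix $v \in {\cal V}$ and distinguish two cases according to whether $v$ lies in the interior or on the boundary of ${\cal V}$. Since ${\cal V}$ is a convex subset of $\R^N$ (Lemma~\ref{lem3.1}), its boundary $\partial({\cal V})$ is where the dimensionality reduction machinery applies. For a boundary point $v^* \in \partial({\cal V})$, Theorem~\ref{tdimred} supplies a coordinate $i$, a vector $b \in \R^{N-1}$, a constant $d$, and a submodel with performance set $\tilde{\cal V}$ such that $v^* \in \tilde{\cal V}$ and the affine relation ${\tilde v}^{(i)} = d + \langle b, {\tilde v}_{-i}\rangle$ holds on all of $\tilde{\cal V}$. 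This affine constraint means the submodel is effectively an $(N-1)$-dimensional problem: its performance set is the image of the $(N-1)$-dimensional projection $\tilde{\cal V}_{-i}$ under an affine injection. I would therefore apply the inductive hypothesis to the submodel with the reduced reward vector-function (the one obtained by deleting the $i$-th coordinate), which is itself a uniformly absorbing atomless MDP inheriting those properties, and whose reduced performance set is convex by the induction hypothesis applied to Corollary~\ref{convex}. This yields a sequence of deterministic policies in the submodel whose performance vectors approximate $v^*$; since deterministic policies of the submodel are deterministic policies of the original MDP and the affine relation forces the $i$-th coordinate to follow, we conclude $v^* \in {\bar{\cal V}}^\dF$.

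For an interior point $v$ of ${\cal V}$, I would represent it as a convex combination of boundary points, or more directly use that ${\cal V}^\dF$ is assumed convex and its closure ${\bar{\cal V}}^\dF$ is compact (as noted before the lemma). The key observation is that once we know $\partial({\cal V}) \subset {\bar{\cal V}}^\dF$, convexity of ${\cal V}^\dF$ lets us fill in the interior: every interior point of ${\cal V}$ lies in the convex hull of boundary points, all of which are in the compact convex set ${\bar{\cal V}}^\dF$, and since that set is convex its convex hull is contained in it. More carefully, I expect one argues that $\overline{{\cal V}} = \overline{\text{conv}(\partial {\cal V})} \subset {\bar{\cal V}}^\dF$, using that ${\bar{\cal V}}^\dF$ is closed and convex and contains $\partial({\cal V})$.

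The main obstacle I anticipate is making the reduction to the boundary rigorous and correctly threading the induction. Theorem~\ref{tdimred} only gives information at boundary points, so the crux is verifying that the submodel supplied there genuinely satisfies the hypotheses needed to invoke the induction (uniform absorption, atomlessness, and above all convexity of the reduced ${\cal V}^\dF$, which is exactly the inductive content of Corollary~\ref{convex} one dimension down). A delicate point is that the affine relation \eqref{edimred} holds with an arbitrary coordinate $i$ for which $\tilde{b}^{(i)}\neq 0$, so I must ensure the deleted coordinate is genuinely redundant and that approximating the reduced $(N-1)$ coordinates by deterministic policies automatically controls the remaining coordinate. The other subtle step is the interior argument: it relies essentially on the assumed convexity of ${\cal V}^\dF$, which is precisely why the lemma is stated conditionally on that convexity, and on the fact that a closed convex set containing the boundary of ${\cal V}$ must contain all of ${\cal V}$.
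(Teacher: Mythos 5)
Your argument for boundary points is sound --- in fact it is precisely how the paper proves Theorem~\ref{tgmain} itself, not Lemma~\ref{lclos} --- but the interior step contains a genuine gap. Your plan hinges on the claim that $\partial({\cal V})\subset{\bar {\cal V}}^\dF$, after which convexity would ``fill in'' the interior. You cannot establish that claim with the tools you invoke: the dimensionality reduction of Theorem~\ref{tdimred} is, by its very definition (and by the first line of its proof, which uses $v^*\in{\cal V}$ to pick a stationary policy attaining $v^*$), available only at boundary points that \emph{belong} to ${\cal V}$, i.e., at points of $\partial({\cal V})\cap{\cal V}$; but ${\cal V}$ is in general not closed and may contain none of its boundary points. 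Concretely, take the $N$-criteria analogue of Example~\ref{exnoncomp}: $\X=[0,1]$, $\mu$ Lebesgue, $A(x)=\A=(0,1)^N$, $r(x,a)=a$, with absorption after one step. This MDP is uniformly absorbing and atomless, yet ${\cal V}={\cal V}^\dF=(0,1)^N$ is open, so $\partial({\cal V})\cap{\cal V}=\emptyset$: your Case 1 applies to no point at all, every point of ${\cal V}$ is interior, and the set of boundary points your induction can reach is empty. Trying instead to treat points of $\partial({\cal V})\setminus{\cal V}$ as limits of points of ${\cal V}$ is circular, because whether such limits lie in ${\bar {\cal V}}^\dF$ is exactly what the lemma asserts.

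The paper's proof takes a completely different and much shorter route that avoids boundary considerations entirely: a separation argument. Suppose some point of ${\cal V}$ lies outside ${\bar {\cal V}}^\dF$; by Lemma~\ref{lem3.1} it equals $v^\pi$ for a stationary policy $\pi$. Since ${\cal V}^\dF$ is convex by hypothesis, ${\bar {\cal V}}^\dF$ is convex and compact, so a hyperplane strictly separates $v^\pi$ from it, giving $\sup_{\phi\in\dF}\langle b,v^\phi\rangle<\langle b,v^\pi\rangle$ for some $b\in\R^N$. Scalarizing the rewards to ${\tilde r}:=\langle b,r\rangle$ turns both sides into expected total rewards of a single-criterion problem, and by Feinberg~\cite{F92} the suprema of such rewards over deterministic and over stationary policies coincide, whence $\sup_{\phi\in\dF}\langle b,v^\phi\rangle=\sup_{\phi\in\dF}{\tilde v}^\phi\ge{\tilde v}^\pi=\langle b,v^\pi\rangle$ --- a contradiction. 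Note that this proof needs no induction on $N$, no dimensionality reduction, and nothing beyond the stated convexity hypothesis; your route, by contrast, would make Lemma~\ref{lclos} depend on Theorem~\ref{tgmain} in dimension $N-1$, entangling it with the main induction, whereas the paper keeps this lemma as a self-contained ingredient that is then invoked (alongside Theorem~\ref{tdimred}, which handles the boundary points in ${\cal V}$) inside that induction.
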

\begin{proof} Suppose that  $\cal V\not\subset {\bar {\cal V}}^\dF.$  Then there exists a stationary policy $\pi$ such that $v^\pi\notin {\bar {\cal V}}^\dF.$ Therefore, there exists a hyperplane  in $\R^N$ separating the point $v^\pi$ and the convex compact set ${\bar {\cal V}}^\dF.$ Let $\langle b,v\rangle +d=0$ be such a hyperplane, and let  $\langle b,v^\pi\rangle +d>0$ and $\langle b,v\rangle +d\le 0$ for all $v\in {\cal V}^\dF,$ where $b\in\R^N$ and $d\in \R.$ Thus
\begin{equation}\label{eL81EF}
 \sup_{\phi\in\dF}\langle b,v^\phi\rangle=\sup_{v\in\cal{V}^\dF} \langle b,v\rangle <\langle b,v^\pi\rangle.
\end{equation}

Let us consider the reward function ${\tilde r}(x,a):=\langle{b},r(x,a)\rangle,$ where $x\in \X,$ and $a\in A (x).$ The expected total rewards for this reward function, a policy $\sigma,$ and the initial state distribution $\mu$ is denoted by ${\tilde v}^\sigma,$ and    ${\tilde v}^\sigma=\langle b,v^\sigma\rangle$ for all $\sigma\in\S.$

Supremums of the expected total rewards are equal for deterministic and stationary policies; see Feinberg~\cite{F92}.  Therefore, $\sup_{v\in{\cal V}^\dF} \langle b,v\rangle=\sup_{\phi\in\dF}\tilde{v}^\phi \ge\tilde{v}^\pi =  \langle b,v^\pi\rangle.$ This contradicts \eqref{eL81EF}.
\end{proof}

\begin{lemma}\label{lcovdeFF}
Let the statement of Theorem~\ref{tgmain} be correct for $N=1,2,\ldots$ criteria. Then, under the assumptions of Theorem~\ref{tgmain}, the set ${\cal V}^\dF$ is convex  for the case of $(N+1)$ criteria.
\end{lemma}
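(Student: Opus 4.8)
The plan is to carry out the inductive step by reducing the convexity of ${\cal V}^\dF$ to the two-policy submodels of Section~\ref{s7}, and then to combine the dimensionality reduction theorem with the induction hypothesis. Fix two deterministic policies $\phi^0,\phi^1$ and write $v^0=v^{\phi^0}$, $v^1=v^{\phi^1}$; convexity amounts to showing $[v^0,v^1]\subseteq{\cal V}^\dF$. First I would pass to the MDP defined by $\phi^0$ and $\phi^1$, in which every deterministic policy is the switching rule ``use $\phi^1$ on a measurable set $S$ and $\phi^0$ off $S$''. Denote by $\hat{\cal V}$ and $\hat{\cal V}^\dF$ its performance sets; then $\hat{\cal V}^\dF\subseteq{\cal V}^\dF$, the points $v^0,v^1$ belong to $\hat{\cal V}^\dF$ (they correspond to $S=\emptyset$ and $S=\X$), and by Lemma~\ref{lem3.1} the set $\hat{\cal V}=\hat{\cal V}^\S$ is convex and hence contains $[v^0,v^1]$. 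Since this submodel is again uniformly absorbing and atomless, Corollary~\ref{cL1} shows that $\hat{\cal V}^\dF$ is connected. The goal thus reduces to the equality $\hat{\cal V}^\dF=\hat{\cal V}$, which I would establish using only the theorem in at most $N$ criteria.

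The boundary of $\hat{\cal V}$ is reached by deterministic policies. For $v^*\in\partial(\hat{\cal V})$, Theorem~\ref{tdimred} yields a further submodel whose performance set ${\cal W}$ contains $v^*$ and satisfies an affine relation $w^{(i)}=d+\langle b,w_{-i}\rangle$ for all $w\in{\cal W}$. Deleting coordinate $i$ turns this submodel into a uniformly absorbing atomless MDP with $N$ criteria, so the induction hypothesis gives that its deterministic and full performance sets coincide. Because every vector of ${\cal W}$ is the image of its projection $w_{-i}$ under the fixed affine map that restores the $i$-th coordinate, this equality lifts to ${\cal W}^\dF={\cal W}$, and therefore $v^*\in{\cal W}^\dF\subseteq\hat{\cal V}^\dF$. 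Letting $v^*$ range over all boundary points gives $\partial(\hat{\cal V})\subseteq\hat{\cal V}^\dF$.

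The remaining and decisive step is to realize the relative interior of $\hat{\cal V}$ by deterministic policies, for connectedness together with $\partial(\hat{\cal V})\subseteq\hat{\cal V}^\dF$ does not by itself force $\hat{\cal V}^\dF$ to be convex. Here I would again lower the dimension by the induction hypothesis: given an interior target $w$, the theorem for the first $N$ criteria produces a switching set whose first $N$ coordinates equal $w_{-(N+1)}$, and the collection of all such sets carries, by convexity of $\hat{\cal V}$, a whole segment of last-coordinate values whose two endpoints lie on $\partial(\hat{\cal V})$ and are therefore deterministic. I would then connect the two extreme switching sets by a total-variation-continuous path of deterministic policies along which the first $N$ coordinates stay pinned at $w_{-(N+1)}$, using the path construction of Theorem~\ref{L1} together with the continuity estimate of Theorem~\ref{th6.2}, and conclude by the intermediate value theorem that some set on the path attains $w^{(N+1)}$, and hence $w$ itself. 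The main obstacle is exactly this constrained path: sweeping the last coordinate continuously while keeping the first $N$ coordinates fixed is where the atomlessness of the occupancy measure $q$ of Section~\ref{s7} must be exploited in full, in combination with the induction hypothesis. Once $\hat{\cal V}^\dF=\hat{\cal V}$ is established, $[v^0,v^1]\subseteq\hat{\cal V}=\hat{\cal V}^\dF\subseteq{\cal V}^\dF$, and the convexity of ${\cal V}^\dF$ for $N+1$ criteria follows.
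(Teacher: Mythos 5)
Your reduction to the two-policy MDP and your treatment of boundary points are sound and essentially match the paper: for $v^*\in\partial(\hat{\cal V})$, Theorem~\ref{tdimred} combined with the induction hypothesis, lifted through the affine relation \eqref{edimred}, does give $v^*\in\hat{\cal V}^\dF.$ The genuine gap is precisely the step you yourself flag as ``the main obstacle'': realizing the interior points of $\hat{\cal V}.$ Your plan needs a total-variation-continuous path of deterministic policies along which the first $N$ coordinates of the performance vector stay pinned at $w_{-(N+1)}$ while the last coordinate sweeps an interval. Nothing in the paper, and nothing in your sketch, produces such a constrained path: the path of Theorem~\ref{L1} sweeps the sets $\X_\alpha$ monotonically and moves all $N+1$ coordinates simultaneously, and Theorem~\ref{th6.2} only provides continuity, not the ability to hold $N$ functionals fixed along the sweep. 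Keeping $N$ coordinates constant along a curve of switching sets is a codimension-$N$ constraint, and proving that the set of deterministic policies satisfying it is nonempty \emph{and} path-connected is a constrained, parametrized Lyapunov-type statement that is essentially as hard as the lemma itself; without it your intermediate value argument cannot start.

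The paper avoids this difficulty entirely by a different device: a one-parameter \emph{nested} family of submodels. For each $\alpha\in[0,1]$ the action is forced to be $\phi^1(x)$ on the expanding sets $\X_\alpha$ of \eqref{edefxalppha}, while the choice $\{\phi^0(x),\phi^1(x)\}$ is kept elsewhere; this yields convex compact performance sets ${\cal V}(\alpha)$ decreasing from ${\cal V}(0)=\hat{\cal V}$ to ${\cal V}(1)=\{v^{\phi^1}\}.$ Using Theorem~\ref{th6.2} together with \eqref{qqqaEF}, the paper shows that $G(\alpha):=d(\hat v,{\cal V}(\alpha))$ is continuous and nondecreasing, takes $\hat\alpha$ maximal with $G(\hat\alpha)=0,$ and proves by a projection argument that $\hat v$ cannot be an interior point of ${\cal V}(\hat\alpha),$ hence $\hat v\in\partial({\cal V}(\hat\alpha)).$ Thus every point of $\hat{\cal V},$ interior or not, becomes a \emph{boundary} point of some smaller submodel in the family, where your (correct) boundary argument, dimensionality reduction plus the induction hypothesis, applies. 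If you replace your constrained-path step by this nested-family argument, your proof closes; as written, it is incomplete at its decisive step.
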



\begin{proof} 
Let the lemma be correct for $N$-dimensional vector-functions $r,$ where $N=1,2,\ldots$.  We shall prove that the set ${\cal V}^\dF$ is convex for $(N+1)$-dimensional vector-functions $r.$  Let $\phi^0$ and $\phi^1$ be two deterministic policies and $\lambda\in (0,1).$ Our goal is to show that there exists a deterministic policy $\phi_\lambda$ such that
$v^{\phi_\lambda}:=\lambda v^{\phi^0}+ (1-\lambda)v^{\phi^1}.$  Let us consider the stationary policy $\pi^*$ defined in \eqref{eqdefpist}, the measure $q$ on $\X$ defined in
\eqref{edrfqEFEF}, and the family of expanding sets $\X_{\alpha}\subset \X$ defined in \eqref{edefxalppha}.
%
 For each $\alpha\in [0,1]$  we consider the submodel with the action sets reduced to the sets
  $$A^\alpha(x)=\left\{ \begin{array}{ll}
\{\phi^1(x)\}, & \mbox{ if } x\in \X_\alpha, \\
\{\phi^0(x),\phi^1(x) \}, & \mbox{ if } x\in \X\setminus \X_\alpha.
  \end{array} \right.$$
  Let ${\cal V}(\alpha)$ be the set of all performance vectors for the submodel with the action sets $A^\alpha(\cdot).$ According to Lemmas~\ref{lem3.1} and \ref{cfinAabs}, each set ${\cal V}(\alpha)$ is convex and compact.  In addition,
  \begin{equation}\label{eqsetmonEAF}
  {\cal V}(\alpha)\subset {\cal V}(\beta)\qquad {\rm if\ } 0\le \beta\le\alpha\le 1.
  \end{equation}

 In view of the definition in \eqref{edefxalppha}, $\X_0=\emptyset,$ which implies
  \[ A^0(x)=\{\phi^0(x),\phi^1(x)\}, \qquad x\in\X.\]
Therefore ${\cal V}(0)$ is the performance set for the MDP defined by the deterministic policies $\phi^0$ and $\phi^1.$ Thus, $v^{\phi^0}, v^{\phi^1}\in {\cal V}(0).$

Observe that  ${\cal V}{(1)}=\{v^{\phi^1}\}.$
Indeed, let $\varphi$ be a deterministic policy for the MDP with the action sets $A^1(\cdot).$ Then $\varphi(x)=\phi^1$ when $x\in\X_1\subset \X.$ In view of \eqref{qqqaEF}, $q(\X\setminus\X_1)=0.$  Since $\X\setminus X(\varphi,\phi^1)\subset \X\setminus\X_1,$ we have that $q(\X\setminus X(\varphi,\phi^1))=0.$ Lemma~\ref{lem6.1} implies that $q^\varphi=q^{\phi^1}.$  Therefore,
  $v^\varphi=\int_\X r(x,\varphi(x))q^\varphi(dx)=\int_\X r(x,\phi^1(x))q^{\phi^1}(dx)=v^{\phi^1},$
  where the first and the last equalities follow from the definitions of expected total rewards, occupancy measures, and deterministic policies;  the  equality in the middle follows from $q^\varphi=q^{\phi^1}$ and $\varphi(x)={\phi^1}(x)$ for $ q^{\phi^1}$-almost all $x\in\X.$

  Since the set ${\cal V}(0)$ is convex and $v^{\phi^0},v^{\phi^1}\in{\cal V}(0),$ we have that  $\lambda v^{\phi^0}+ (1-\lambda)v^{\phi^1}\in{\cal V}(0).$  Consider an arbitrary point ${\hat v}\in {\cal V}(0).$  We shall prove that $v^\phi={\hat v}$ for some deterministic policy $\phi$ for the submodel with action sets $A^0(x),$ $x\in \X.$

To do this, we'll show that ${\hat v}\in \partial({\cal V}(\hat\alpha))$ for some $\hat\alpha\in [0,1],$ where $\partial(G)$ is the boundary of the convex compact subset $G$ of $\R^{N+1}.$
 For a point $e\in\R^{N+1}$ and a closed set $E\subset\R^{N+1},$ we denote by $d(e,E):=\min\{\Vert e-z\Vert: z\in E\}$ the distance between $e$ and $E.$  Since $E$ is closed, $d(e,E)=0$ if and only if $e\in E.$  If $E_1\subset E_2$ for two closed subsets of $\R^{N+1},$  then $d(e, E_2)\le d(e, E_1).$

 As follows from \eqref{eqsetmonEAF},
 the function \[ G(\alpha):=d({\hat v},{\cal V}(\alpha)),\qquad\qquad \alpha\in [0,1],\] is nondecreasing in $\alpha\in [0,1]$ and  $G(0)=d({\hat v},{\cal V}(0))=0.$  Let us prove that this function is continuous. To do this, we choose an arbitrary $\alpha\in [0,1)$ and $\Delta>0$ such that $\alpha+\Delta\le 1.$ We also choose an arbitrary point $v\in {\cal V}(\alpha).$ Let $\pi$ be a stationary policy in the submodel with the action sets $A^\alpha(x),$ $x\in\X,$ such that $v^{\pi}=v.$ Let $\sigma$ be  the stationary policy in the model with the action sets  $A^{\alpha+\Delta}(x),$ $x\in\X,$ defined by
 \[
\sigma(\phi^1(x)|x):=\begin{cases} 1, &{\rm if}\ x\in \X_{\alpha+\Delta}\setminus \X_{\alpha},\\
\pi((\phi^1(x)|x), &{\rm if}\ x\in \X\setminus (\X_{\alpha+\Delta}\setminus\X_\alpha).
\end{cases}
\]
Then $\X\setminus (\X_{\alpha+\Delta}\setminus\X_\alpha)\subset X(\pi,\sigma),$ which implies $ \X\setminus X(\pi,\sigma)\subset \X_{\alpha+\Delta}\setminus\X_\alpha.$  As follows from \eqref{qqqaEF}, $q(\X\setminus X(\pi,\sigma))\le q(\X)\Delta.$ According to Theorem~\ref{th6.2}, for every $\epsilon_1>0$  there exists $\delta>0$ such that $d_{TV}(Q^{\pi},Q^\sigma)\le \epsilon_1$ if $\Delta\le \delta.$   This implies that $\Vert v^{\pi}-v^\sigma \Vert\le K(N+1)^{\frac{1}{2}}\epsilon_1,$ where the positive constant $K$ is an upper bound of $|r^{(n)}(x,a)|$ for $x\in\X,$ $a\in \A,$ and $n=1,2,\ldots,N+1.$
 So, if we choose an arbitrary $\epsilon>0,$ set $\epsilon_1=\epsilon/(K(N+1)^{\frac{1}{2}}),$ and choose $\Delta \le \delta,$ then $\Vert v^{\pi}-v^\sigma \Vert\le\epsilon.$  This implies that, if $\Delta \le \delta,$  $\alpha\in [0,1),$ and  $\alpha + \delta\le 1,$ then
 \begin{equation}\label{estdegal}
 d(v,{\cal V}(\alpha+\Delta))\le\epsilon \qquad {\rm for\ all\ }v\in{\cal V}(\alpha).
  \end{equation}
Let us consider two cases: (i) $G(\alpha)>0$ and (ii) $G(\alpha)=0.$


 (i) In this case, $\hat{v}\notin {\cal V}(\alpha).$ We denote by $\hat{v}_\alpha$ the projection of the point $\hat{v}$ onto the convex compact set $\cal{V}(\alpha),$  that is,  $\hat{v}_\alpha$ is the unique point in $\cal{V}(\alpha)$ satisfying $\Vert\hat{v}-\hat{v}_\alpha \Vert=d(\hat{v},\cal{V}(\alpha)).$ Let $\hat{v}_{\alpha+\Delta}\in {\cal V}(\alpha+\Delta)$ be the projection of $\hat{v}_\alpha$ onto the compact set ${\cal V}(\alpha+\Delta).$ Then, according to the triangle inequality
\begin{equation*} d(\hat{v},{\cal V}(\alpha))+d(\hat{v}_\alpha,{V}(\alpha+\Delta))=   \Vert \hat{v}-\hat{v}_\alpha\Vert +\Vert \hat{v}_\alpha-\hat{v}_{\alpha+\Delta}\Vert\ge \Vert \hat{v}-\hat{v}_{\alpha+\Delta}\Vert\ge d(\hat{v}, {\cal V}(\alpha+\Delta)).
 \end{equation*}
 Since $0<d(\hat{v}_\alpha,{V}(\alpha+\Delta))<\epsilon$ and the nonnegative function $G(\alpha)$ is nondecreasing, the last formula implies
 \begin{equation}\label{einecas1}
   0\le G(\alpha+\Delta)- G(\alpha)\le \epsilon.
 \end{equation}

(ii) The equality $G(\alpha)=0$ means that $\hat{v}\in {\cal V}(\alpha).$  Therefore, \eqref{estdegal} for $v=\hat v$  implies $0\le G(\alpha+\Delta)- G(\alpha)= G(\alpha+\Delta)\le \epsilon.$  So,  \eqref{einecas1} holds.

Since \eqref{einecas1} holds for the both cases, this implies continuity of the function $G(\alpha)$  on $[0,1].$
Let us define \[{\hat \alpha}:= \max\{\alpha\in [0,1]: d(\hat{v},{\cal V}(\alpha))=0\}.\]
 This point exists because $d(\hat{v},{\cal V}(0))=0$ and the continuous function $G(\alpha)=d(\hat{v},{\cal V}(\alpha))$ is nondecreasing in $\alpha.$ Since $d(\hat{v},{\cal V}(\hat{\alpha}))=0,$ we have that $\hat{v}\in {\cal V}(\hat\alpha).$  If $\hat{\alpha}=1,$ then $\hat{v}=v^{\phi^1}\in {\cal V}(1)=\partial {\cal V}(1)$ since ${\cal V}(1)=\{v^{\phi^1}\}.$ 

 So, we need to consider the case  $\hat{\alpha}\in [0,1).$  In this case we shall prove that $\hat{v}\in\partial({\cal V}(\hat{\alpha})).$

 Since ${\hat v}\in {\cal V}({\hat \alpha}),$ in order to prove that $\hat{v}\in\partial({\cal V}(\hat{\alpha})),$ it is sufficient to show that ${\hat v}$ cannot be an interior point of ${\cal V}({\hat \alpha}).$  Indeed, let ${\hat v}$ be an interior point of  ${\cal V}({\hat \alpha}).$ Then there exists $\epsilon>0$ such that $d({\hat v},\partial ({\cal V}({\hat \alpha})))\ge \epsilon.$  In view of \eqref{estdegal} for $\alpha=\hat\alpha,$ there exists $\Delta>0$ such that ${\hat \alpha}+\Delta\le 1 $ and $d(v,{\cal V}({\hat \alpha}+\Delta))\le \epsilon/2$ for all $v\in {\cal V}({\hat \alpha}).$ Thus, $d(\hat{v},{\cal V}({\hat \alpha}+\Delta))\le \epsilon/2.$ The definition of ${\hat \alpha}$ implies that $d({\hat v}, {\cal V}({\hat \alpha}+\Delta))>0.$  Let $\hat{v}_p$ be the projection of the point ${\hat v}$ onto the convex set ${\cal V}({\hat \alpha}+\Delta).$ Observe that ${\hat v}_p$ is an interior point of  ${\cal V}({\hat \alpha})$ because $\Vert \hat{v}-\hat{v}_p\Vert= d(\hat{v},{\cal V}({\hat \alpha}+\Delta))\le\epsilon/2.$      Since ${\hat v}$ and $\hat{v}_p$ are interior points of ${\cal V}({\hat \alpha}),$ there is a point $v\in \partial({\cal V}({\hat \alpha}))$   such that $v$ belongs to the  line projecting ${\hat v}$ to ${\cal V}({\hat \alpha}+\Delta),$ and ${\hat v}$ is located between ${\hat v}_p$ and $v.$  This is illustrated on Fig.~\ref{Fig01}. Therefore $d(v, {\cal V}({\hat \alpha}+\Delta))= \Vert v-{\hat v}_p \Vert \ge  \Vert v-{\hat v} \Vert \ge d({\hat v},\partial({\cal V}({\hat \alpha})))\ge \epsilon,$ where the first inequality holds because $\hat v$ is between $v$ and $\hat{v}_p,$  the second inequality follows from ${v}\in \partial ({\cal V}({\hat \alpha})),$  and the last one follows from the choice of $\epsilon.$ This conclusion contradicts to $d(v,{\cal V}({\hat \alpha}+\Delta))\le \epsilon/2.$  
 Therefore, $ v\in \partial({\cal V}({\hat \alpha})).$  

 \begin{figure}[!htb]
\centering
	\includegraphics[scale=0.7]{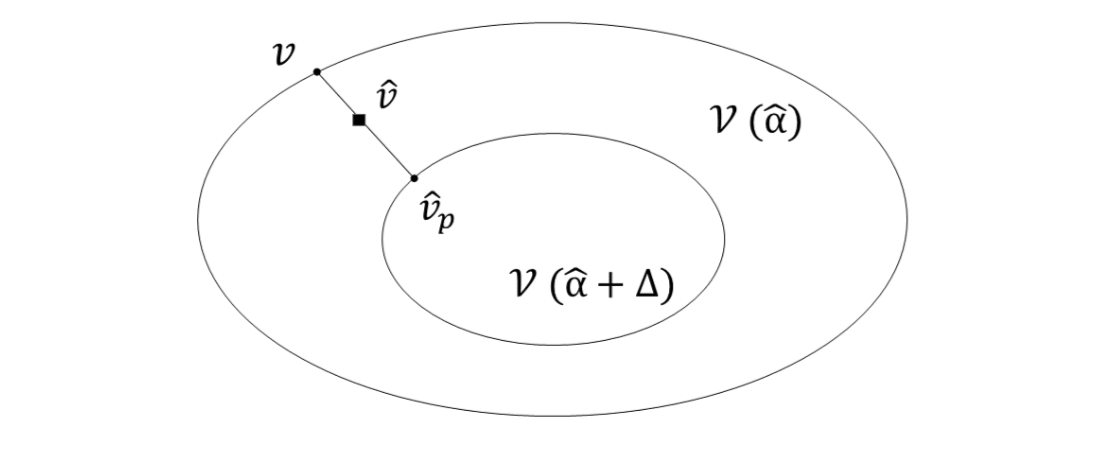}
\caption{\normalsize $\hat v$ cannot be an interior point of ${\cal V}(\hat\alpha)$: otherwise, $d(v, {\cal V}({\hat \alpha}+\Delta))= \Vert v-{\hat v}_p \Vert \ge  \Vert v-{\hat v} \Vert \ge d({\hat v},\partial({\cal V}({\hat \alpha})))\ge \epsilon,$	 and $d(v,{\cal V}({\hat \alpha}+\Delta))\le \epsilon/2$ (contradiction).	 \label{Fig01}}
\end{figure}

 Since ${\hat v}\in \partial({\cal V}({\hat \alpha})),$ by Theorem~\ref{tdimred} there is a coordinate $i=1,\ldots,N+1$ such that ${\hat v}_{-i}$ is a performance vector in a submodel of the MDP with action sets $A^{\hat \alpha}(\cdot)$ and the value of $v^{(i)}$ is completely defined by the vector  ${\hat v}_{-i}$ according to formula \eqref{edimred}. The vector ${\hat v}_{-i}$ has $N$ coordinates. By the induction assumption, there is a deterministic policy $\phi$ such that   $ v^{\phi}_{-i}={\hat v}_{-i}.$  Thus,   $ v^{\phi}={\hat v}.$
\end{proof}

\begin{proof}[Proof of Theorem~\ref{tgmain}]
According to Corollary~\ref{cN1}, the statement of the theorem is correct for $N=1.$ Suppose the statement of Theorem~\ref{tgmain} is correct for $N$ criteria, where $N=1,2,\ldots\ .$ Let us prove that it is correct for the case of $(N+1)$ criteria.

Consider the case on $(N+1)$ criteria. By Lemma~\ref{lcovdeFF}, the set ${\cal V}^\dF$ is convex. Therefore,
Lemma~\ref{lclos}  and ${\cal V}^\dF\subset \cal V$ imply that, if $v\in {\cal V}\setminus \partial({\cal V}),$ then $v\in {\cal V}^\dF.$  Let $v\in \partial({\cal V}).$ Theorem~\ref{tdimred} implies that there exists a coordinate $i=1,\ldots,N+1,$  a vector $b\in\R^N, $ a constant $d,$ and a submodel with the performance set $\tilde{\cal{V}}$
such that $ v\in{\tilde {\cal V}}$ and     ${\tilde v}^{(i)}= d+\langle b,{\tilde v}_{-i}\rangle$ for all ${\tilde v}\in{\tilde {\cal V}},$  where for $w\in\R^N$ the following notations are used: $w^{(i)}$ is the $i^{\rm th}$ coordinate of the vector $w$ and $w_{-i}$ is the projection of $w$ onto $\R^N$ obtained by removing the $i^{\rm th}$ coordinate from $w.$ As follows from the induction assumption, there is a deterministic policy $\phi$ in the submodel such that $v^\phi_{-i}=v_{-i}$ and $v^{(i),\phi}=  d+\langle b, v^\phi_{-i}\rangle  = d+\langle b,v_{-i}\rangle =v^{(i)}.$ Thus, $v^\phi=v.$
\end{proof}

\section{Unbounded Rewards}\label{s10}
This section describes extensions to unbounded reward vector-functions $r.$  These extensions are based on the standard weighted norm transformation of an MDP with unbounded rewards to an MDP with bounded rewards.  
%
%

Let us consider an MDP with the expected total rewards and with a standard Borel state space $\bar{\X}:=\X\cup\{\bar{x}\}, $ where ${\bar x}\notin \X,$ standard Borel action space $\A,$ sets of available actions $A(x),$ where $A({\bar x})=\{{\bar a}\},$ with $\bar a$ being an arbitrary point in $\A,$ transition probabilities $p$ such that $p({\bar x}|{\bar x},{\bar a})=1,$ a reward vector-functions $r$ with values in $\R^N$  such that $r^{(n)}({\bar x},{\bar a})=0,$ $n=1,2,\ldots,N,$ and  an initial probability distribution $\mu$ such that $\mu(\X)=1.$ Let there exist a positive measurable function $w:\X\mapsto (0,+\infty),$ for which the following conditions hold:

(a)  $\sup_{x\in \X}\sup_{a\in A(x)}\frac{1}{w(x)}\int_\X w(y)p(dy|x,a)\le 1,$

(b) $\int_\X w(x)\mu(dx)< +\infty,$

(c) $\sup_{x\in \X}\sup_{a\in A(x)}\frac{|r^{(n)}(x,a)|}{w(x)}< +\infty,$ $n=1,2,\ldots,N.$


Let us consider an MDP with state space $\bar \X,$ action space $ \A,$ sets of available action $A(x),$  $x\in\bar \X,$ transition probability $\tilde p,$ where ${\tilde p}(\bar{x}|\bar{x},\bar{a}):=1,$  \[{\tilde p}(Y|x,a):=\frac{1}{w(x)}\int_Yw(y)p(dy|x,a),\qquad \qquad  Y\in {\cal X},\  x\in \X,\  a\in A(x),\]
and
\[ {\tilde p} ({\bar x}|x,a):=1-\frac{1}{w(x)}\int_\X w(y)p(dy|x,a),\qquad \qquad    x\in \X,\  a\in A(x),\]
reward function $\tilde r,$ where ${\tilde r}^{(n)}(\bar{x},\bar{a})=0$ and,  for $n=1,2,\ldots,N,$
\begin{equation*}{\tilde r}^{(n)}(x,a)=\frac{r^{(n)}(x,a)}{w(x)}\int_\X w(y)\mu(dy),\qquad\qquad x\in \X,\ a\in A(x),\end{equation*}
  and the initial probability distribution $\tilde\mu$ with
  \begin{equation}\label{edefwtkl}\tilde{\mu}(Y):=\frac{\int_Y w(x)\mu(dx)}{\int_\X w(y)\mu(dy)},\qquad\qquad Y\in\cal \cal X,\end{equation}
   and  $\mu(\bar{x})=0.$ If $\mu(x)=0,$ then $\tilde\mu(x)=0,$ $x\in\X.$  Let ${\tilde v}^\pi$ be the vector of the expected  total expected rewards in the MDP with the transition probabilities $\tilde p$ and rewards $\tilde r$ controlled by a policy $\pi,$ when the initial state distribution is $\tilde{\mu}.$

We say that the defined MDP is 
 uniformly absorbing, if  equality \eqref{elimsup} holds for this MDP with the initial distribution $\tilde\mu$ instead of $\mu$  and the transition probability $\tilde p$ instead of $p.$
This definition is consistent with Definition~\ref{defabsu} because the assumptions in Definition~\ref{defabs} also hold for this MDP with the fixed initial state distribution $\tilde\mu.$ In addition,    the function $\tilde r$ is bounded.   The following statement follows from Theorem~\ref{tgmain}.
\begin{corollary}\label{cwn1}
Consider an MDP with the state space $\bar \X$ satisfying conditions (a--c) and such that $r({\bar x},{\bar a})=0$ and $p({\bar x}|{\bar x},{\bar a})=1$ for the state ${\bar x}$ and action ${\bar a}$ defined above. Then ${\tilde v}^\pi=v^\pi$ for all policies $\pi.$  Furthermore, if the MDP with the transition probabilities $\tilde p$ is uniformly absorbing and atomless, then ${\cal V}={\cal V}^\dF$ for the initial MDP and this set is convex.
\end{corollary}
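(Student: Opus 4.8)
The plan is to deduce the corollary from Theorem~\ref{tgmain} and Corollary~\ref{convex} applied to the transformed MDP with transition probability $\tilde p$, reward $\tilde r$, and initial distribution $\tilde\mu$; the bridge is the identity $\tilde v^\pi=v^\pi$ for every policy $\pi\in\Pi$, which I would prove first. Write $W:=\int_\X w(y)\mu(dy)$, which is finite by (b) and positive since $\mu(\X)=1$ and $w>0$. Condition (a) makes $\tilde p(\bar x|x,a)=1-\frac{1}{w(x)}\int_\X w(y)p(dy|x,a)\ge 0$, so $\tilde p$ is a genuine transition probability, and condition (c) gives $|\tilde r^{(n)}(x,a)|=W\frac{|r^{(n)}(x,a)|}{w(x)}\le W\sup_{x,a}\frac{|r^{(n)}(x,a)|}{w(x)}<+\infty$, so $\tilde r$ is bounded, as required for Theorem~\ref{tgmain}.

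The analytic heart is a change-of-measure identity. Fix $\pi$, $n$, and $t$. Since $\tilde\mu(dx_0)=w(x_0)W^{-1}\mu(dx_0)$ by \eqref{edefwtkl}, while $\tilde p(dx_{s+1}|x_s,a_s)=w(x_{s+1})w(x_s)^{-1}p(dx_{s+1}|x_s,a_s)$ on $\X$ and the action-selection kernels $\pi_s$ are common to both MDPs, the product of weight ratios along any history $(x_0,a_0,\ldots,x_t)$ confined to $\X$ telescopes to $w(x_t)W^{-1}$. Thus the transformed strategic measure, restricted to $\{x_0,\ldots,x_t\in\X\}$, equals $w(x_t)W^{-1}$ times the original one. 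Multiplying by $\tilde r^{(n)}(x_t,a_t)=Ww(x_t)^{-1}r^{(n)}(x_t,a_t)$ cancels both $w(x_t)$ and $W$, while the event $\{x_t=\bar x\}$ contributes $0$ on both sides because $\tilde r(\bar x,\bar a)=r(\bar x,\bar a)=0$. This yields $\tilde E_{\tilde\mu}^\pi\tilde r^{(n)}(x_t,a_t)=E_\mu^\pi r^{(n)}(x_t,a_t)$ for every $t$ and $n$.

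Applying the previous step to $r^+$ and $r^-$ separately and summing over $t$ by the monotone convergence theorem, as in \eqref{defdiscpos}, gives $\tilde v^\pi_+=v^\pi_+$ and $\tilde v^\pi_-=v^\pi_-$ coordinatewise in $[0,+\infty]$. Hence $v^\pi$ is well-defined in the sense of Definition~\ref{dEF3.1} exactly when $\tilde v^\pi$ is, and then $\tilde v^\pi=v^\pi$, which proves the first assertion. Because the state space, the action space, the action sets, and therefore the policy classes $\Pi$ and $\dF$ are identical for the two MDPs, this identity shows that the performance sets coincide: ${\cal V}={\tilde{\cal V}}$ and ${\cal V}^\dF={\tilde{\cal V}}^\dF$. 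Under the additional hypothesis that the transformed MDP is uniformly absorbing and atomless, Theorem~\ref{tgmain} gives ${\tilde{\cal V}}^\dF={\tilde{\cal V}}$ and Corollary~\ref{convex} gives the convexity of ${\tilde{\cal V}}^\dF$; transporting these back through ${\cal V}={\tilde{\cal V}}$ and ${\cal V}^\dF={\tilde{\cal V}}^\dF$ yields ${\cal V}^\dF={\cal V}$ and its convexity.

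I expect the main obstacle to be making the telescoping change of measure rigorous at the level of strategic measures rather than heuristic path densities, and in particular justifying the interchange of summation and expectation for possibly unbounded $r$. The clean way around the latter is to work throughout with the nonnegative rewards $r^+$ and $r^-$, for which the monotone convergence theorem applies without integrability concerns, and only at the very end to subtract the two finite (under the absorbing hypothesis) nonnegative quantities.
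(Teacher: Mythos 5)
Your proposal is correct and follows essentially the same route as the paper: both establish the change-of-measure identity ${\tilde P}_{\tilde\mu}^\pi(dx_0da_0\ldots dx_tda_t)=w(x_t)P^\pi(dx_0da_0\ldots dx_tda_t)/\int_\X w(y)\mu(dy)$ on histories in $\X$, deduce $E^\pi r(x_t,a_t)={\tilde E}_{\tilde\mu}^\pi{\tilde r}(x_t,a_t)$ termwise, sum over $t$ to get ${\tilde v}^\pi=v^\pi$, and then apply Theorem~\ref{tgmain} to the transformed MDP. Your write-up is somewhat more careful than the paper's (which dismisses the identity as ``standard straightforward arguments''), in particular in spelling out the telescoping of weight ratios and in handling possibly unbounded $r$ via $r^+$, $r^-$ and monotone convergence, but it is the same proof.
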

\begin{proof}
Let $\tilde E$ and $\tilde P$ denote the expectations and probabilities for the MDP with the transition probabilities $\tilde p$ and the initial distribution $\tilde\mu.$
$P^\pi(dx_0da_0\ldots dx_tda_t)$ and ${\tilde P}_{\tilde \mu}^\pi(dx_0da_0\ldots dx_tda_t)$ are probability distributions on the standard Borel space $({\bar \X}\times\A)^{t+1},$ where $t=0,1,\ldots\ .$  The standard straightforward arguments imply that  for $(x_0,a_0,\ldots,x_t,a_t)\in (\X\times\A)^{t+1},$ $t=0,1,\ldots,$
\begin{equation}\label{eqwn01}{\tilde P}_{\tilde\mu}^\pi(dx_0da_0,\ldots,x_ta_t) =\frac{w(x_t)P^\pi(dx_0da_0,\ldots,x_ta_t)}{\int_\X w(y)\mu(dy)}.\end{equation}
%

 Since $p({\bar x}|{\bar x}, {\bar a})={\tilde p}({\bar x}|{\bar x}, {\bar a})=1$ and $r({\bar x},{\bar a})={\tilde r}({\bar x},{\bar a})=0,$   equality~\eqref{eqwn01} and the definition of the reward function ${\tilde r}$ imply that $E^\pi r(x_t,a_t)={\tilde E}_{\tilde \mu}^\pi {\tilde r}(x_t,a_t)$ for all $t=0,1,\ldots\ .$  This equality implies that
${\tilde v}^\pi=v^\pi$ for an arbitrary policy $\pi.$  This implies that ${\cal V}^\G=\{{\tilde v}^\pi:\pi\in \G\}$ for every set of policies $\G\subset\Pi.$   Since the MDP with the transition probabilities $\tilde p$ is uniformly absorbing and the reward vector-function $\tilde r$ is bounded,  Theorem~\ref{tgmain} implies that $\{{\tilde v}^\phi:\phi\in\dF\}=\{{\tilde v}^\pi:\pi\in\Pi\}.$  Therefore, ${\cal V}={\cal V}^\dF.$
\end{proof}

Now let us consider a discounted MDP with the state space $\X$ introduced in Section~\ref{s2} without assuming that the reward vector-function $r$  is bounded.  Let us consider the following assumption:

(d) there exists a positive measurable function $w:\X\mapsto (0,+\infty)$ satisfying assumptions  (b,c), and
 there exists a constant $\tilde\beta\in (0,1)$ such that $\beta\sup_{x\in \X}\sup_{a\in A(x)}\frac{1}{w(x)}\int_\X w(y)p(dy|x,a)\le \tilde\beta.$

Then the following corollary from Theorem~\ref{tmain} holds.
\begin{corollary}\label{cwn2}
If an atomless discounted MDP with a possibly unbounded reward vector-function $r$ satisfies assumption (d), then  ${\cal V}_\beta={\cal V}_\beta^\dF$ and this set is convex.
\end{corollary}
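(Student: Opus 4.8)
The plan is to reduce the unbounded discounted problem to the uniformly absorbing setting of Corollary~\ref{cwn1}. First I would apply the standard absorbing transformation from the proof of Lemma~\ref{lreduction}: enlarge the state space to ${\bar \X}:=\X\cup\{\bar x\}$, keep the action sets, the reward vector-function $r$ (extended by $r({\bar x},{\bar a})=0$), and the initial distribution $\mu$, and replace $p$ by $\bar p$ with $\bar p(Y|x,a)=\beta p(Y|x,a)$ for $Y\in\cal X$ and $\bar p({\bar x}|x,a)=1-\beta$ for $x\in\X$. Exactly as in Lemma~\ref{lreduction}, the survival probability $P^\pi\{T^{\bar x}>t\}=\beta^t$ matches the discount factor, so $v^\pi=v_\beta^\pi$ for every policy $\pi$, whence ${\cal V}={\cal V}_\beta$ and ${\cal V}^\dF={\cal V}_\beta^\dF$; moreover the absorbing MDP is atomless if and only if the original discounted MDP is.

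Before invoking Corollary~\ref{cwn1} I would check that all performance vectors are well-defined and finite, which is the only place where unboundedness must be controlled and hence where the identity $v^\pi=v_\beta^\pi$ is genuinely meaningful. Writing $W_t:=\beta^t w(x_t)$ and using assumption (d), the conditional expectation obeys $E^\pi[W_{t+1}|x_t,a_t]=\beta^{t+1}\int_\X w(y)p(dy|x_t,a_t)\le\tilde\beta W_t$, so that $E^\pi W_t\le\tilde\beta^t\int_\X w\,d\mu$ and $\sum_{t=0}^\infty E^\pi W_t\le(1-\tilde\beta)^{-1}\int_\X w\,d\mu<+\infty$ by (b). Since $|r^{(n)}|\le C w$ for a constant $C$ by (c), this gives $E^\pi\sum_{t=0}^\infty\beta^t|r^{(n)}(x_t,a_t)|<+\infty$, so each $v_\beta^\pi$ is absolutely convergent, hence well-defined and finite, for every policy $\pi$.

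Next I would verify that the absorbing MDP just constructed satisfies the hypotheses of Corollary~\ref{cwn1} with the same weight function $w$. Condition (a) holds because $\frac{1}{w(x)}\int_\X w(y)\bar p(dy|x,a)=\frac{\beta}{w(x)}\int_\X w(y)p(dy|x,a)\le\tilde\beta\le 1$ by (d), while (b) and (c) are inherited verbatim from (d). For the weighted-norm model $\tilde p$ of Section~\ref{s10}, the escape probability satisfies $\tilde p({\bar x}|x,a)=1-\frac{\beta}{w(x)}\int_\X w(y)p(dy|x,a)\ge 1-\tilde\beta>0$, so $P^{\tilde\pi}\{T^{\bar x}>n\}\le\tilde\beta^n$ and $E_x^{\tilde\pi}T^{\bar x}\le(1-\tilde\beta)^{-1}$ uniformly over states and policies; Lemma~\ref{lbound} then shows the $\tilde p$-model is uniformly absorbing. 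It is atomless because $\tilde\mu(x)=0$ (as $\mu(x)=0$) and $\tilde p(y|x,a)=\frac{\beta}{w(x)}w(y)p(y|x,a)=0$ for all $x,y\in\X$ (as $p(y|x,a)=0$).

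With these verifications in place, Corollary~\ref{cwn1} applies to the absorbing MDP and yields ${\cal V}={\cal V}^\dF$ together with the convexity of this set; transporting the identity back through $v^\pi=v_\beta^\pi$ established in the first paragraph gives ${\cal V}_\beta={\cal V}_\beta^\dF$ and the convexity of this set. I expect the main obstacle to be the finiteness step of the second paragraph: one must confirm that the single weighted geometric bound simultaneously controls the discounted rewards in the original model and the absorption time of the auxiliary $\tilde p$-model, and that the equality $v^\pi=v_\beta^\pi$ truly survives the passage to unbounded rewards rather than holding only formally.
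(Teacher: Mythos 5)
Your proof is correct, but it is not the route the paper's own proof takes: you reduce the $\beta$-discounted MDP to an undiscounted absorbing MDP (the transformation of Lemma~\ref{lreduction}) and then invoke Corollary~\ref{cwn1} as a black box, whereas the paper stays at the discounted level. The paper builds a new \emph{discounted} MDP with transition probability $\hat p(Y|x,a)=\frac{\beta}{\tilde\beta w(x)}\int_Y w(y)p(dy|x,a)$, discount factor $\tilde\beta$, bounded rewards $\tilde r$, and initial distribution $\tilde\mu$, proves the change-of-measure identity ${\tilde\beta}^t\hat P^\pi_{\tilde\mu}(dx_0da_0,\ldots,dx_tda_t)=\beta^t w(x_t)P^\pi(\cdots)/\int_\X w\,d\mu$ to get ${\tilde v}^\pi_{\tilde\beta}=v_\beta^\pi$ for every policy, and then applies Theorem~\ref{tmain} to the bounded atomless discounted model. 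Interestingly, the paper explicitly acknowledges your route in the sentence following its proof (``Corollary~\ref{cwn2} can be also proved by reducing discounted MDPs \ldots to undiscounted MDPs, as this is done in the proof of Lemma~\ref{lreduction}, and by applying Corollary~\ref{cwn1}''), but it never fills in the details; you do. Your verification that conditions (a)--(c) hold for $\bar p$ with the same weight $w$ (with bound $\tilde\beta\le 1$), that $\tilde p(\bar x|x,a)\ge 1-\tilde\beta>0$ gives uniform absorption via Lemma~\ref{lbound}, and that atomlessness passes through, is exactly what is needed. Your second paragraph (the supermartingale-type bound $E^\pi W_t\le\tilde\beta^t\int_\X w\,d\mu$ giving absolute convergence of $v_\beta^\pi$) is a genuine addition: it justifies that the identity $v^\pi=v_\beta^\pi$ from Lemma~\ref{lreduction}, stated there only for bounded rewards, survives unbounded rewards --- a point the paper's one-line remark glosses over, and which the paper's own proof handles instead through the boundedness of $\tilde r$ and the identity $\beta^tE^\pi r(x_t,a_t)=\tilde\beta^t\tilde E^\pi_{\tilde\mu}\tilde r(x_t,a_t)$. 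What each approach buys: yours is more modular (all new work is hypothesis-checking for an existing corollary), while the paper's is more self-contained at the discounted level and exhibits explicitly the equivalent bounded-reward discounted model with the modified discount factor $\tilde\beta$.
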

\begin{proof}
Let us add an isolated point $\bar x$ to the standard Borel space $\X$ and set ${\bar \X}:= \X\cup\{\bar x\}.$ Let us consider a discounted MDP with the action set $\A,$ sets of available actions $A(x),$ $x\in \X,$ reward vector-function $\tilde r,$ initial state distribution $\tilde \mu$ and the discount factor $\tilde\beta$ described and defined above.  However, instead of $\tilde p,$ the transition probability for this MDP is $\hat p,$ where ${\hat p}(\bar{x}|\bar{x},\bar{a}):=1,$  \[ {\hat p}(Y|x,a):=\frac{\beta}{{\tilde\beta}w(x)}\int_Yw(y)p(dy|x,a),\qquad \qquad  Y\in {\cal X},\  x\in \X,\  a\in A(x),\]
and
\[  {\hat p} ({\bar x}|x,a):=1-\frac{\beta }{{\tilde\beta}w(x)}\int_\X w(y)p(dy|x,a),\qquad \qquad    x\in \X,\  a\in A(x).\]

Let $\hat E$ and $\hat P$ denote the expectations and probabilities for the defined MDP with the state space $\bar \X$ and  transition probabilities $\hat p.$ In particular, ${\tilde P}_{\tilde \mu}^\pi(dx_0da_0\ldots dx_tda_t)$ is a probability distribution on the standard Borel space $({\bar \X}\times\A)^{t+1},$ where $t=0,1,\ldots\ .$   The following formula is similar to \eqref{eqwn01}: for $t=0,1,\ldots$ and $(x_0,a_0,\ldots,x_t,a_t)\in (\X\times\A)^{t+1},$
\begin{equation}\label{eqwn02}{\tilde\beta}^t{\hat P}_{\tilde\mu}^\pi(dx_0da_0,\ldots,dx_tda_t) =\frac{\beta^tw(x_t)P^\pi(dx_0da_0,\ldots,dx_tda_t)}{\int_\X w(y)\mu(dy)}.\end{equation}

 Since ${\hat p}({\bar x}|{\bar x}, {\bar a})=1$ and ${\tilde r}({\bar x},{\bar a})=0,$   equality~\eqref{eqwn02} and the definition of the reward function ${\tilde r}$ imply that $\beta^tE^\pi r(x_t,a_t)={\tilde\beta}^t{\tilde E}_{\tilde \mu}^\pi {\tilde r}(x_t,a_t)$ for all $t=0,1,\ldots\ .$  This equality implies that
${\tilde v}_{\tilde\beta}^\pi=v_\beta^\pi$ for an arbitrary policy $\pi,$ where ${\tilde v}_{\tilde\beta}^\pi$ is the vector of the total discounted expected rewards in the MDP with the transition probabilities $\hat p$ and discount factor $\tilde\beta,$ when a policy $\pi$ is chosen and the initial state distribution is $\tilde{\mu}.$  This implies that ${\cal V}^\G_\beta=\{{\tilde v}^\pi_{\tilde{\beta}}:\pi\in \G\}$ for every set of policies $\G\subset\Pi.$   Since the reward vector-function $\tilde r$ is bounded,  Theorem~\ref{tmain} implies that $\{{\tilde v}_{\tilde\beta}^\phi:\phi\in\dF\}=\{{\tilde v}_{\tilde\beta}^\pi:\pi\in\Pi\}.$  Therefore, ${\cal V}_\beta={\cal V}_\beta^\dF.$
\end{proof}
Corollary~\ref{cwn2} can be also proved by reducing discounted MDPs with discounted factors $\beta$ and $\tilde\beta$ to undiscounted MDPs, as this is done in the proof of Lemma~\ref{lreduction}, and by applying Corollary~\ref{cwn1}.

\section{Compactness of Performance Sets and Lyapunov's Convexity Theorem}\label{s11}

In this section we describe sufficient conditions for the compactness of the sets $\cal V$ and ${\cal V}^\dF$ and discuss the relation of our results to Lyapunov's convexity theorem.  From an intuitive point of view, it is clear that the set of the ranges of  vector-measures is a particular case of the sets $\cal V$ and ${\cal V}^\dF$, when a one-step problem is considered.  We demonstrate this in Example~\ref{ex10.2}.
The following example shows  that the set $\cal V$ may be noncompact.
\begin{example}\label{exnoncomp}
Let $\X:=[0,1],$ $A(x):=\A:=(0,1),$ $r(x,a)= a,$ $\mu$ be a Lebesgue measure on $[0,1],$ and under every decision the process moves from every state $x\in\X$ to an absorbing state.  For every deterministic policy $\phi$, we have that $v^\phi=\int_0^1 \phi(x)dx,$ where $\phi:[0,1]\mapsto (0,1)$ is an arbitrary Borel function.  In this example, ${\cal V}^\dF=(0,1).$ Since this MDP is uniformly absorbing and atomless, ${\cal V}={\cal V}^\dF=(0,1).$ By changing the action sets to $(0,1],$ $[0,1),$ and $[0,1],$ we obtain MDPs with performance sets $(0,1],$ $[0,1),$ and $[0,1]$ respectively.
\end{example}

As stated in Corollary~\ref{cafin}, Condition (S) from Section~\ref{s5} is sufficient for the compactness of $\cal V.$ For example, in Example~\ref{exnoncomp} this condition holds when $A(x)=\A=[0,1],$ $x\in\X.$   Condition (S) always holds when all the action sets $A(x)$ are finite. Another sufficient condition (W)  for the compactness  of the set of strategic measures was introduced by Sch\"al~\cite{b8}. This condition assumes  weak continuity of transition probabilities. Being combined with continuity of the bounded reward vector-functions $r:\X\times\A\mapsto \R^N,$ this weak continuity condition implies compactness of the performance set $\cal V.$  This weak continuity condition (W) was used in Feinberg and Piunovskiy~\cite{b6}. We do not use and do not consider weak continuity condition (W) in this paper.  In general, a measure $\nu$ is called atomless if for any measurable set $E$ with $\nu(E)>0$ there exists a measurable subset $E'$ of $E$ such that $\nu(E)>\nu(E')>0.$ A vector-measure is called atomless, if each of its coordinates is an atomless measure.

 Lyapunov's convexity theorem states that the range of a finite atomless  vector-measure is convex and compact. In other words, if $(\X,\cal X)$ is a measurable space and $\nu$ is a finite atomless vector-measure with values in $\R^N,$ then the set ${\cal W}:=\{\nu(B):B\in\cal X \}$ is a compact and convex subset of $\R^N.$

One of the  equivalent formulations of this version of Lyapunov's convexity theorem (see e.g., Blackwell~\cite{Bl}) states that, if $\mu$ is a finite atomless measure on a measurable space $(\X,\cal X)$ and $r:(\X,{\cal X}) \mapsto (\R^N,\B(\R^N))$ is a measurable vector-function, whose coordinates are nonnegative functions satisfying $\int_\X r^{(n)}(x)\mu(dx)<+\infty,$ where $n=1,\ldots,N,$ then the set ${\cal W}^*:=\{\int_B r(x)\mu(dx):B\in\cal \cal{X} \}$ is a compact and convex subset of $\R^N.$

To see that the classic Lyapunov convexity theorem is  equivalent to this statement, for an atomless vector-measure $\nu=(\nu^{(1)},\ldots,\nu^{(N)}),$  define the atomless measure $\mu=\sum_{n=1}^N\nu^{(n)}.$ Since $\nu^{(n)}\ll\mu,$ there are Radon-Nikodym derivatives $r^{(n)}:=d\nu^{(n)}/d\mu,$ $n=1,\ldots,N.$  Therefore, $\nu(B)=\int_B r(x) \mu(dx)$ for all $\in\cal X,$ and ${\cal W}={\cal W}^*.$  Conversely, for an atomless finite measure $\mu$ and  the vector function $r$ described in the previous paragraph,
 $\nu(B)=\int_B r(x)\mu(dx),$ where $B\in\cal X,$ is the atomless  vector-measure, and $\cal{W}^*={\cal W}$ is its range.


The following example demonstrates that Theorem~\ref{tgmain} and Corollaries~\ref{cfinAabs}, \ref{cwn1} imply Lyapunov's convexity theorem for the case, when an atomless measure is defined on a standard Borel space.

\begin{example}\label{ex10.2}
Let us consider an MDP with a state  space $\bar{\X}=\X\cup\{\bar{x}\},$ where $\X$ is a standard Borel space, action sets $A(x):=\A:=\{0,1\}$ and $A(\bar x)=\{0\},$ rewards $r:\X\times\A\mapsto\R^N,$  and $\mu$ being an atomless initial probability measure on $\X.$ We also set $p(\bar x|x,a)=1$ for all $x\in\bar X$ and $a\in A(x).$ That is, from each state $x$  the process moves to the absorbing state $\bar x.$   
We also set $r(x,0):={\bar 0}$ for all $x\in\bar X,$ where $\bar 0$ is the zero-vector in $\R^N,$ and   $r(x,1):=r(x),$ $x\in X,$ where $r=(r^{(1)},\ldots,r^{(N)})$ is a Borel vector-function such that each coordinate function $r^{(n)}$ is nonnegative and $\int_\X r^{(n)}(x)\mu(dx)<+\infty$ for all $n=1,\ldots,N.$ 

Every deterministic policy $\phi\in\dF$ is defined by the set $B^\phi:=\{x\in\X:\phi(x)=1\}.$ Observe that $v^\phi=\int_{B^\phi} r(x)\mu(dx).$  In addition, $\{B^\phi:\, \phi\in\dF\}$ is the Borel $\sigma$-algebra on $\X.$  Thus,  we are in the framework of the equivalent formulation of Lyapunov's convexity theorem,   and ${\cal W}^*={\cal V}^\dF.$ Since the function $r$ can be unbounded, we define the weight function
$w(x):=1+\sum_{n=1}^N |r^{(n)}(x)|,$ $x\in\X.$

Then $v^\phi=\int_{B^\phi} {\tilde r}(x){\tilde \mu}(dx),$ where the measure $\tilde\mu$ is defined in \eqref{edefwtkl} and the vector-function
${\tilde r}(x):=r(x)(w(x))^{-1}\int_\X w(y)\mu(dy),$  $x\in \X,$
is bounded.  Therefore, in view of Corollary~\ref{cwn1},  ${\cal V}^\dF= {\cal V}$ and this set is closed and compact. The compactness of the set $\cal V$ follows from Corollaries~\ref{cfinAabs} and \ref{cwn1}.  The set ${\cal W}^*={\cal V}^\dF$ is convex and compact. Thus, Lyapunov's convexity theorem for a standard Borel space $\X$  is a  particular example of an application of Corollary~\ref{cwn1}, which in its turn follows from Theorem~\ref{tgmain}.
\end{example}

%
%
%


\begin{thebibliography}{99}
\bibitem{Al} E. Altman, {\it Constrained Markov Decision Processes}, (Chapman \& Hall/CRC, Boca Raton, FL, USA, 1999).
\bibitem{Ba}E.J. Balder, On  compactness of the space of policies dynamic programming, {\it Stoch. Proc. Appl.} {\bf 32} (1989), 141-150.
\bibitem{b3}D.P. Bertsekas and S.E. Shreve, {\it Stochastic   Optimal Control}, (Athena Scientific, Belmont, MA, USA, 1996).
\bibitem{Bl} D. Blackwell, On a theorem of Lyapunov, {\it Ann. Math. Statist.} {\bf 22} (1951),  112-114.
\bibitem{Bo} V.I. Bogachev, {\it Measure Theory, Volume I,} (Springer, Berlin, 2007).
\bibitem{Bor} V.S. Borkar,  A convex analytic approach to Markov decision processes, {\it Probab. Theory and Related Fields} {\bf 79} (1988), 642-657.
\bibitem{Bor1}  V.S. Borkar, Convex analytic methods in Markov decision processes, in Handbook of Markov Decision Processes: Methods and Applications, E. Feinberg  and A. Shwartz, eds., Kluwer, Boston, 2002, pp. 347–375.
\bibitem{DWW} A. Dvoretzky, A. Wald and J. Wolfowitz, Elimination of randomization in certain problems
of statistics and of the theory of games, {\it Proc. Natl. Acad. Sci.} {\bf 36} (1950), 256-260.
\bibitem{DWW1} A. Dvoretzky, A. Wald, and J. Wolfowitz, Elimination of randomization in certain statistical  procedures and zero-sum two-person games, {\it Ann. Math. Statist.} {\bf 22} (1951), 1-21.
\bibitem{DY} E.B. Dynkin and A.A. Yushkevich,
{\sl Controlled Markov Processes.}
(Springer-Verlag, New York, NY, USA, 1979).
\bibitem{EH} P. Embrechts and M. Hofert, A note on generalized inverses. {\it Math. Meth. Oper. Res.} {\bf 77} (2013), 423-432.
\bibitem{F82} E.A. Feinberg, Nonrandomized Markov and semi-Markov strategies in dynamic programming, {\it Theory Probab. Appl.} {\bf 27} (1982), 116-126.
\bibitem{F92} E.A. Feinberg, On stationary strategies for Borel dynamic programming, {\it Math. Oper. Res.} {\bf 17} (1992), 392-397.
\bibitem{FH} E.A. Feinberg and J. Huang, On the reduction of total-cost and average-cost
MDPs to discounted MDPs, {\it Naval Research Logistics}, DOI: 10.1002/nav.21743, (2017).
\bibitem{FH1} E.A. Feinberg and J. Huang, Reduction of total-cost and average-cost
MDPs with weakly continuous transition probabilities to discounted MDPs, {\it Oper. Res. Lett.} {\bf 46} (2018), 179-184.
\bibitem{FKZ14} E.A. Feinberg, P.O. Kasyanov and M.Z. Zgurovsky, Convergence of probability measures and
Markov decision models with incomplete information, {\it Proceedings of the Steklov Institute of Mathematics} {\bf 287} (2014), 96-117.
\bibitem{FKZUFL}  E.A. Feinberg, P.O. Kasyanov and M.Z. Zgurovsky, Uniform Fatou's lemma, {\it J. Math. Anal. Appl.} {\bf 444} (2016), 550-567.
\bibitem{b6}E.A. Feinberg and A.B. Piunovskiy, Multiple objective nonatomic Markov decision processes with total reward criteria, {\it J. Math. Anal. Appl.} {\bf 247}, (2000), 45-66.
\bibitem{b5} E.A. Feinberg and A.B. Piunovskiy, Nonatomic total rewards Markov decision processes with multiple criteria, {\it J. Math. Anal. Appl.} {\bf 273} (2002), 93-111.
\bibitem{FP} E.A. Feinberg and A.B. Piunovskiy, On the Dvoretzky--Wald--Wolfowitz theorem on nonrandomized statistical decisions, {\it Theory Probab. Appl.} {\bf 50} (2006), 463-466.
\bibitem{FR} E.A. Feinberg and U.G. Rothblum, Splitting randomized stationary policies in total-reward Markov decision processes,  {\it Math. Oper. Res.} {\bf 37} (2012),  129-153.
    \bibitem{FS} E.A. Feinberg and A. Shwartz, Constrained discounted dynamic programming, {\it Math. Oper. Res., } {\bf 21}  (1996),  922-945.
\bibitem{FSo} E.A. Feinberg and I.M. Sonin, Notes on equivalent stationary policies in Markov decision processes with total rewards, {\it Math. Meth. Oper. Res.} {\bf 44} (1996), 205-221.
\bibitem{HLGH}    O. Hern\'andez-Lerma and J Gonz\'alez-Hern\'andez, Constrained Markov control processes in Borel spaces: the discounted case,
\emph{Math. Meth. Oper. Res.} {\bf 52} (2000), 271-285.
\bibitem{JN} A. Ja\'skiewicz and Nowak, On a generalization of the
Dvoretzky-Wald-Wolfowitz theorem with an application to a robust optimization problem, arXiv:1712.07577v1 [math.PR].
    \bibitem{Ke} A.S. Kechris, {\it Classical Descriptive Set Theory}, (Springer-Verlag, New York, NY, USA, 1995).
 \bibitem{Neveu}   J. Neveu, {\it Mathematical Foundations of the Calculus of Probability,} (Holden Day: San
Francisco, 1965).
\bibitem{No} A.S. Nowak, On the weak topology in the space of probability measures induced by policies. {\it Bull. Polish Acad. Sci. Math.} {\bf 36} (1988), 181-186.
\bibitem{b7}A.B. Piunovskiy, {\it Optimal Control of Random Sequences in Problems with Constraints}, (Kluwer: Dordrecht, 1997).
\bibitem{Pi98} A.B. Piunovskiy, Controlled random sequences: methods of convex analysis and problems with functional constraints. {\it Russian
Math. Surveys} {\bf 53}, (1998) 1233-1293.
\bibitem{Ro}  H.L. Royden, {\it Real Analysis,} third edition, (Macmillan Publishing Co., New York, NY, USA, 1988).
\bibitem{b8} M. Sch\"al, On dynamic programming: compactness of the space of policies, {\it Stoch. Proc. Appl.} {\bf 3} (1975), 345-364.
\bibitem{Sr} S.M. Srivastava, {\it A Course on Borel Sets}, (Springer-Verlag, New York, NY, USA, 1998).
\bibitem{Yu} A.A. Yushkevich, The compactness of a policy space in dynamic programming via an extension theorem for Carath\'eodory functions, {\it Math. Oper.Res.} {\bf 22} (1997), 458-467.
\end{thebibliography}
\end{document}